\theoremstyle{plain}
\newtheorem{thm}{Theorem}[section]
\newtheorem{prop}[thm]{Proposition}
\newtheorem{cor}[thm]{Corollary}
\newtheorem{lem}[thm]{Lemma}
\theoremstyle{definition}
\newtheorem{defn}[thm]{Definition}
\newtheorem{exmp}[thm]{Example}
\theoremstyle{remark}
\newtheorem*{rem}{Remark}
\newtheorem*{ack}{Acknowledgments}
\numberwithin{equation}{section}
\newcommand{\A}{\mathcal{A}}
\newcommand{\C}{\mathbb{C}}
\newcommand{\hen}{\mathcal{X}_{\acute{e}t}}
\newcommand{\Perf}{\text{Perf}}
\newcommand{\Map}{\text{Map}}
\newcommand{\Catperf}{\text{Cat}^{\text{perf}}}
\newcommand{\Cat}{\text{Cat}}
\newcommand{\shvcat}{ShvCat^{\acute{e}t}(X)}
\newcommand{\Bet}{\mathbf{Betti}_{X}}
\newcommand{\Sa}{\mathcal{S}}
\newcommand{\Sm}{\text{Sm}}
\newcommand{\Mod}{\text{Mod}}
\newcommand{\et}{\acute{e}t}
\newcommand{\walpha}{\widetilde{\alpha}}
\newcommand{\Pic}{\text{Pic}}
\newcommand{\Aa}{\mathbb{A}^1} 
\newcommand{\Alg}{\text{Alg}}
\newcommand{\Pre}{\text{Pre}}
\newcommand{\Prs}{\mathcal{P}r^{L,st, \omega}}
\newcommand{\Aff}{\text{Aff}_{\mathbb{C}}}
\begin{document}
\title{Derived Azumaya Algebras and twisted K-theory}
\author{Tasos Moulinos}
\address{Department of Mathematics, Statistics and Computer Science, University of Illinois at Chicago, 851 S. Morgan
Street, Chicago, IL, 60607-7045, USA}
\email{tmouli2@uic.edu}
\begin{abstract}
We construct a relative version of topological $K$-theory, in the sense of \cite{blanc}, over an arbitrary quasi-compact, quasi-separated $\C$-scheme $X$.  This has as input a $\text{Perf}(X)$-linear stable $\infty$-category and output a sheaf of spectra on $X(\C)$, the space of complex points of $X$.  We then characterize the values of this functor on inputs of the form $\Perf(X, A)$, for $A$ a derived Azumaya algebra over $X$.  In such cases we show that this coincides with the $\alpha$-twisted topological $K$-theory of $X(\C)$ for some appropriately defined twist of $K$-theory.  We use this to provide a topological analogue of a classical result of Quillen's on the algebraic $K$-theory of Severi-Brauer varieties.

\end{abstract}
\maketitle
\section{Introduction}

In the conjectural theory of motives as envisaged by Grothendieck, one deals with realization functors into various abelian categories of modules or representations. One can then take such categories, which are themselves realized as hearts of certain $t$-structures on the derived category level, as rough approximations to the hypothetical abelian category of mixed motives. Given a scheme $X$ over a characteristic zero field, one recovers, for example, the singular cohomology groups of the associated complex analytic space $X(\C)$. Similarly, one has  \'{e}tale and $\ell$-adic realization functors, which exist in arbitrary characteristic.   It has since become important to view  motives and therefore their realizations in families; this allows one to keep track of various aspects of the geometry of the underlying scheme. One ultimately recovers not just an abelian group or a representation, but a sheaf of such objects on an associated topological space or $\infty$-topos. In this vein, Ayoub defines, in \cite{ayoub}, a notion of Betti realization over an arbitrary base complex scheme. 

In the ``noncommutative" setting of algebraic geometry, one replaces a scheme with its associated triangulated category (rather, its ``enhancement" as a dg or stable $\infty$-category) of perfect complexes. The focus then shifts onto dg-categories themselves as the main object of study; this allows for a somewhat more unified framework to study objects from various settings such as symplectic geometry or representation theory. The aforementioned realization functors will now give rise to realization functors out of derived, noncommutative versions of motives. One is able to moreover recover a surprising amount of data about the underlying geometrical or arithmetic nature of the objects being studied. An example of this machinery at work can be found in \cite{blanctoen}, where the authors apply a noncommutative extension of the  $\ell$-adic realization functor to the dg-category of matrix factorizations of a Landau-Ginzburg model $(X,f)$.

It is quite useful, in this categorified setting, to have realization functors for non-commutative motives parametrized by various schemes or more geometric objects. With this in mind, we focus our study on a noncommutative version of topological realization for complex schemes, the theory of which was set forth by Blanc in \cite{blanc}. 
\subsection{Topological $K$-theory of complex dg categories}

In \cite{blanc}, the author constructs an invariant of dg-categories over the complex numbers $K^{top} : \Catperf(\C) \to Sp$ which he describes as a ``noncommutative analog of rational Betti cohomology for algeraic varieties". For any dg-category $T$, the topological $K$-theory spectrum $K^{top}(T)$  he constructs will be a module over the periodic complex $K$-theory spectrum. Perhaps rather surprisingly, the following equivalence holds for any scheme $X \in \text{Sch}_{\C}$, separated of finite type:
\[
K^{top}(\text{Perf}(X)) \simeq KU(X(\C)).
\]
Here $\text{Perf}(X)$ denotes dg-category of perfect complexes of $\mathcal{O}_{X}$-modules and $KU(X(\C))$ is  topological $K$-theory of the associated space of complex points $X(\C)$.\\

The first main goal of this work is to define a relative version of topological $K$-theory. 
Namely  for every scheme $X \in \text{Sch}_{\C}$,we construct a functor
\[
K^{top}_{X}: \text{Cat}^{\text{perf}}(X) \to Shv_{Sp}( X(\C))
\]
taking values in the $\infty$-category of sheaves of spectra on $X(\C)$ (as opposed to just spectra)  We refer to this as the \emph{relative topological $K$-theory of $X$}.  When $X = Spec(\C)$, we show that this reproduces the original version of topological $K$-theory due to Blanc.

\subsection{Azumaya algebras and the twisted derived category}
Our next objective is to use this relative version of topological $K$-theory to study certain ``twists" of  the derived category over a scheme. To motivate what we mean by this, we begin with the rather classical notion of a central simple algebra over a field $k$. These noncommutative $k$-algebras, upon extending scalars to a splitting field of $k$, become equivalent to matrix algebras. In his seminal paper \cite{grothaz}, Grothendieck globalized this into the notion of an Azumaya algebra of a scheme. In short, an Azumaya algebra over a scheme $X$ will be a sheaf of algebras which upon pulling back to an \'{e}tale cover, is equivalent to a sheaf of matrix algebras. Every Azumaya algebra over a scheme moreover gives rise to a canonical torsion class 
\[
\alpha \in H^{2}_{\et}(X, \mathbb{G}_m)
\]
in \'{e}tale cohomology. 
Grothendieck asked if every torsion class $\alpha \in H^{2}_{\et}(X, \mathbb{G}_m)$ is realizable by an Azumaya algebra. This question has since been given a negative answer. However, it was the insight of To\"{e}n that the analogous question has a positive answer in the setting of derived algebraic geometry. In \cite{toenaz}, he studied a notion of a local system of dg categories over a derived scheme. These have a precise cohomological characterization; Grothendieck's question in this setting becomes a question of whether or not these local systems of dg-categories have global generators. As it turns out this is true in this case; see \cite{antieaugepner} for the version in the setting of spectral algebraic geometry.  
Derived Azumaya algebras over derived schemes are studied up to a certain $\infty$-categorical notion of Morita equivalence (defined globally, however, over a derived scheme or stack) and  the \emph{derived Brauer group}, $\pi_{0} \mathbf{Br}(X)$, is the group of derived Azumaya algebras up to this notion of equivalence.  

\subsection{Twisted K-theory; algebraic and topological}

Fix a topological space $X$. One may define a vector bundle $E \xrightarrow{\pi} X$ by specifiying vector bundles $E_{i}$ over an open cover $\{U_{i} \}_{i \in I}$ equipped with isomorphisms
\[
f_{ij}: E_{i} \to E_{j}
\]
on $U_{i} \cap U_{j}$ satisfying the condition that $ f_{ij} \circ f_{jk} = f_{ik}$.

Now fix a class $\alpha$ in  $H^{3}(X, \mathbb{Z})$. One can choose a $\text{GL}_{1}(\C)$-valued cocycle $\{\alpha_{ijk}\}$ (with respect to a fixed cover of $X$) that realizes this class in cohomology.  A twisted bundle is given by modifying the standard gluing data by this cocycle  so that 
\[
f_{ij} f_{jk} f_{ik}^{-1} = \alpha_{ijk} \in \text{GL}_{1}(\C).
\]

The collection of \emph{ $\alpha$-twisted} vector bundles may be assembled into a category $\text{Vect}_{\alpha}(X)$. This category comes equipped with a suitable notion of an exact sequence, and so we may take its Grothendieck group and obtain a rudimentary version of the $\alpha$-\emph{twisted topological $K$-theory} of $X$.  This notion of $K$-theory was first introduced by Donovan and Karoubi in \cite{donovan} wherein they defined a  local system in $H^{3}(X,\mathbb{Z}) \times H^{1}(X, \mathbb{Z}/2) \times BBSU_{\otimes}(X)$ and then further developed by Rosenberg, Atiyah and others.  See, for instance, \cite{atiyah}.  Often times, the twisted form of $K$-theory has a geometric interpretation.  In particular if $X$ is a space equipped with a bundle of projective spaces, one may define a form of twisted $K$-theory as the global sections of a certain bundle of $KU$-module spectra twisted by the action of $PU_{n}$ on the fibers.  It is well known that $\pi_{0}$ of this spectrum coincides with the Grothendieck group construction applied to the exact category of $\alpha$-twisted bundles on $X$ for the resulting twist $\alpha$.
\\

There is an obvious notion of twisted algebraic $K$-theory as well.  This is  obtained by taking the $K$-theory of  $\text{Perf}(X, \alpha)$, the category of perfect complexes of $\alpha$-twisted sheaves of $\mathcal{O}_{X}$-modules, equivalently, modules over the associated derived Azumaya algebra.

\subsection{Statement of Results}
Our first application of relative topological $K$-theory we construct identifies the topological $K$-theory of complexes of $\alpha$-twisted sheaves on $X$ with the twisted $K$ theory of $X(\C)$.     

\begin{thm}
Let $X$ denote a quasi-compact, quasi-separated scheme over the complex numbers.  Let $\alpha \in \pi_{0}\mathbf{Br}_{0}(X)$ be a Brauer class, with $\emph{Perf}(X, \alpha) \in \emph{Cat}^{\emph{perf}}(X)$ the associated $\emph{Perf}(X)$-linear category. Then there exists a functorial equivalence
\[
K^{top}_{X}( \emph{Perf}(X, \alpha) ) \simeq \underline{KU^{\widetilde{\alpha}}(X (\C))}.
\]
Here, $ \underline{KU^{\widetilde{\alpha}}(X (\C))}$ is the local system of invertible $KU$-modules associated to a twist $\widetilde{\alpha}: X(\C) \to \text{Pic}_{KU}$ obtained functorially from $\alpha$.  
\end{thm}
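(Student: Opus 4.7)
The plan is to leverage the fact that, by Toën's characterization of derived Azumaya algebras, the $\Perf(X)$-linear category $\Perf(X, \alpha)$ is invertible in $\shvcat$: \'etale-locally it is Morita equivalent to $\Perf(X)$ itself. I will propagate this invertibility through $K^{top}_X$ to obtain an invertible sheaf of $KU$-modules on $X(\C)$, and then identify the resulting classifying map $X(\C) \to \Pic_{KU}$ with $\walpha$.

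First I would check that $K^{top}_X$ respects \'etale descent in its input, so that the value $K^{top}_X(\Perf(X, \alpha))$ is determined by values on any \'etale cover $\{U_i \to X\}$ trivializing $\alpha$. Over each such $U_i$, the Morita equivalence $\Perf(U_i, \alpha|_{U_i}) \simeq \Perf(U_i)$, combined with the relative analogue of Blanc's theorem, identifies the restriction $K^{top}_X(\Perf(X, \alpha))|_{U_i(\C)}$ with the constant sheaf $\underline{KU}$. Consequently the target is a local system of invertible $KU$-module spectra on $X(\C)$. To upgrade this to a statement about classifying maps, I would verify that $K^{top}_X$ is (at least lax) symmetric monoidal with respect to the Lurie tensor on sheaves of categories and the relative smash product on sheaves of $KU$-modules, and hence sends invertible objects to invertible objects. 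This structural step uses crucially the $KU$-linearity of Blanc's construction, and when combined with \'etale descent yields a functorial map $\pi_0 \mathbf{Br}_0(X) \to \pi_0\Map(X(\C), \Pic_{KU})$ associating to $\alpha$ a genuine twist.

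Finally, to pin down the resulting twist as $\walpha$, I would reduce to a universal example: the Brauer class $\alpha$ is classified by a map from $X$ into a classifying stack of invertible sheaves of categories, and by naturality of $K^{top}_X$ together with the Betti realization $\Bet$, the twist extracted above and the prescribed $\walpha$ both arise from the same universal construction, forcing them to coincide. The main obstacle I expect is the monoidality step: setting up the symmetric monoidal structure on $K^{top}_X$ carefully enough to preserve invertibility, since one must interleave the sheafification procedure defining $K^{top}_X$ with tensor structures on both source and target; this is a structural rather than computational difficulty, and once it is settled the descent and naturality arguments are largely formal.
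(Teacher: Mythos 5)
Your plan follows essentially the same strategy as the paper's proof: exploit the \'etale-local triviality of the Azumaya algebra, show that the output of $K^{top}_X$ is a locally constant, invertible sheaf of $KU$-modules, and then read off a twist. But there is a concrete gap at the step you describe as verifying that $K^{top}_X$ is ``(at least lax) symmetric monoidal\ldots and hence sends invertible objects to invertible objects.'' That ``hence'' does not follow: a lax symmetric monoidal functor does \emph{not} preserve invertibility in general. This is precisely the issue the paper flags before Proposition~\ref{nenu}: since $\underline{K^{\acute{e}t}_X}$ is only lax symmetric monoidal, one must separately check that the lax structure map
\[
\underline{K^{\acute{e}t}(A)} \wedge_{K^{\acute{e}t}_{X}} \underline{K^{\acute{e}t}(A^{-1})} \longrightarrow \underline{K^{\acute{e}t}(A \otimes_{\mathcal{O}_X} A^{-1})}
\]
is an equivalence. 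The paper establishes this by passing to stalks at Hensel local points: by \'etale-local triviality (Theorem~\ref{localtriviality}) the stalks of both sides are the \'etale $K$-theory of the local ring, and the stalk functors are symmetric monoidal and jointly conservative. Only after this stalk-wise verification can one propagate invertibility through $\mathbb{A}^1$-localization, $\widetilde{An^{*}_{X}}$, and $L_{KU}$, which are \emph{strong} symmetric monoidal and hence genuinely do preserve invertibility. So the dichotomy you anticipate is reversed: the lax monoidality of $K^{\acute{e}t}$ comes for free from \cite{blumberg2}, whereas the honest work is a pointwise computation, not a structural one.

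A smaller difference: rather than arguing that $K^{top}_X$ ``respects \'etale descent in its input,'' the paper (Proposition~\ref{suh1}) works directly with the output sheaf. An \'etale map $\phi: \mathrm{Spec}(S) \to X$ trivializing $\alpha$ realizes to a \emph{local homeomorphism} $\mathrm{Spec}(S)(\C) \to X(\C)$, and on opens mapping homeomorphically into $X(\C)$ the restriction of $K^{top}_X(\mathrm{Perf}(X,\alpha))$ is identified with the constant sheaf $\underline{KU}$; local constancy then follows, and Proposition~\ref{locallyconstant} upgrades this to a genuine local system. Finally, for the theorem as stated the identification of the twist with $\walpha$ is essentially definitional (the twist is \emph{defined} to be what the construction produces); the comparison with the class in $H^3(X(\C),\mathbb{Z})$ coming from $H^2_{\acute{e}t}(X,\mathbb{G}_m)$ is the content of the separate Corollary~\ref{cohomology}, which requires the Snaith-type decomposition of $BGL_1(KU)$ and the computation $\|\widetilde{\mathrm{Pic}}_{K^{\acute{e}t}}\| \simeq \mathrm{Pic}_{KU}$.
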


As a corollary we  obtain the following result on the ``absolute" topological $K$-theory.   

\begin{cor}
Let $X \in \text{Sch}_{\C}$ be a quasi-compact, quasi separated scheme and let  $\alpha \in H^2_{\acute{e}t}(X, \mathbb{G}_{m})$ be a torsion class in \'{e}tale cohomology corresponding to an ordinary Azumaya algebra over $X$.  Then 
\[
K^{top}( \emph{Perf}(X, \alpha)) \simeq KU^{\walpha}(X(\C))
\]
where $\walpha \in H^{3}(X, \mathbb{Z})$ is the class in singular cohomology obtained  via the topological realization functor
\[
H^2_{\acute{e}t}(X, \mathbb{G}_{m}) = [X, B^{2} \mathbb{G}_{m}] \xrightarrow{|| - ||} [X(\C), ||B^{2}(\mathbb{G}_{m})|| ] \simeq  [X(\C), B^{2}(S^1) ] = H^{3}(X(\C), \mathbb{Z}). 
\]
\end{cor}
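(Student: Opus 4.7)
The plan is to deduce this directly from the preceding theorem by taking global sections over $X(\C)$. Two preliminary compatibilities are required: first, a natural comparison
\[
K^{top}(T) \simeq \Gamma\bigl(X(\C);\ K^{top}_X(T)\bigr)
\]
for $T \in \Catperf(X)$, identifying Blanc's absolute topological $K$-theory with the global sections of its relative refinement; and second, the defining identity $KU^{\walpha}(X(\C)) \simeq \Gamma\bigl(X(\C);\ \underline{KU^{\walpha}(X(\C))}\bigr)$, which just unwinds the definition of twisted $KU$-theory as global sections of a local system of invertible $KU$-modules. The first compatibility should be built into the construction of $K^{top}_X$ developed earlier in the paper, and the second is standard.

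Granted these, I would apply the preceding theorem to $T = \Perf(X, \alpha)$ and take global sections on both sides to obtain
\[
K^{top}(\Perf(X,\alpha)) \simeq \Gamma\bigl(X(\C);\ \underline{KU^{\walpha}(X(\C))}\bigr) \simeq KU^{\walpha}(X(\C)).
\]
The only remaining content is to identify the twist. Since $\alpha$ is torsion and arises from an ordinary (non-derived) Azumaya algebra, it lies in the image of $H^2_{\acute{e}t}(X,\mathbb{G}_m) \to \pi_0 \mathbf{Br}_0(X)$. The theorem produces a twist $\walpha: X(\C) \to \Pic_{KU}$ functorially from this Brauer class, whereas the corollary defines $\walpha$ first as a class in $H^3(X(\C),\mathbb{Z})$ via the topological realization of $B^2\mathbb{G}_m$, and then implicitly as a $KU$-module twist through the standard factorization $K(\mathbb{Z},3) \simeq BBU(1) \hookrightarrow BGL_1(KU) \hookrightarrow \Pic_{KU}$.

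The main obstacle, and really the only content beyond global sectioning, is to verify that these two prescriptions of the twist agree as maps $X(\C) \to \Pic_{KU}$. This is a naturality assertion: the functorial assignment $\alpha \mapsto \walpha$ supplied by the theorem, when restricted to Brauer classes coming from ordinary $\mathbb{G}_m$-gerbes, should factor through the classical topological realization $[X, B^2\mathbb{G}_m] \to [X(\C), K(\mathbb{Z},3)]$ followed by the inclusion of $K(\mathbb{Z},3)$ into the identity component of $\Pic_{KU}$. I would verify this by inspecting the construction of the twist in the proof of the theorem and comparing the algebraic Brauer stack to its topological realization at the level of classifying spaces; once this naturality square is in hand, the corollary follows immediately.
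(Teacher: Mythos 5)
The central step of your proposal --- the claim that
\[
K^{top}(T)\ \simeq\ \Gamma\bigl(X(\C);\ K^{top}_X(T)\bigr)
\]
``should be built into the construction of $K^{top}_X$'' --- is precisely the point the paper flags as \emph{not} known in general, and is the genuine content of this corollary. The paper's own remark preceding the proof makes this explicit: before Bott inversion, semi-topological $K$-theory $K^{st}$ and the global sections of the realized \'{e}tale $K$-theory sheaf genuinely disagree (e.g.\ $K^{st}_*(X)/n \simeq K_*(X)/n$ but this fails for connective $ku$ with finite coefficients), and even after applying $L_{KU}$ the comparison is only established in a special case, namely Proposition \ref{yoverx}, which gives $\Gamma(K^{top}_X(\text{Perf}(Y))) \simeq K^{top}(\text{Perf}(Y))$ when $Y \to X$ is \emph{proper}. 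Since $\text{Perf}(X,\alpha)$ is not of this form, your argument does not go through as stated.

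What the paper actually does to bridge this gap is a detour through the Severi--Brauer scheme $f_\alpha : P \to X$ associated to $\alpha$. Using Bernardara's semi-orthogonal decomposition $\text{Perf}(P) = \langle \text{Perf}(X,\alpha),\dots,\text{Perf}(X,\alpha^k)\rangle$, one decomposes $K^{\acute{e}t}_X(\text{Perf}(P))$ as a direct sum of the $K^{\acute{e}t}_X(\text{Perf}(X,\alpha^j))$; since $\mathbb{A}^1$-localization, the realization $An^*_X$, and $L_{KU}$ are all colimit-preserving, the decomposition passes to $K^{top}_X$ and then to global sections. Because $P \to X$ is proper, Proposition \ref{yoverx} \emph{does} apply to $\text{Perf}(P)$, giving $\Gamma(K^{top}_X(\text{Perf}(P))) \simeq K^{top}(\text{Perf}(P))$; one then argues this equivalence restricts to the summands, yielding $\Gamma(K^{top}_X(\text{Perf}(X,\alpha))) \simeq K^{top}(\text{Perf}(X,\alpha))$. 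That is the missing idea. Your analysis of the twist identification --- that the assignment $\alpha \mapsto \walpha$ factors through $K(\mathbb{Z},3) \hookrightarrow \Pic_{KU}$ via the realization of $B^2\mathbb{G}_m$ --- is correct and corresponds to Corollary \ref{cohomology}, so that part of the proposal is sound; but the global-sections step needs the Severi--Brauer decomposition, not an appeal to a compatibility that has not been established.
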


\noindent We then display an application of these results purely in the realm of topology.  For this, we let $X$ be a quasi compact scheme, and let $P \to X$ denote a Severi Brauer-scheme of relative dimension $n-1$ over $X$.  This will mean that $P$ is, \'{e}tale locally  on $X$, equivalent to projective space $\mathbb{P}_{X}^{n-1}$.  As is well known, there is a sheaf of Azumaya algebras  $A$, over $X$ canonically associated to $P$. It is a classical theorem, due to Quillen in \cite{quillen}  that 
\[
K(P) \simeq K(X) \oplus K(A^{\otimes{1}}) \oplus ... \oplus K(A^{\otimes n-1}).
\]
In particular, this means that the $K$-groups of perfect complexes of $\mathcal{O}_{P}$-modules decompose as a direct sum of the $K$-groups of the categories of perfect complexes of $A^{\otimes{n}}$ modules where $A^{\otimes n}$ is the $n$-th tensor product of $A$.  \\

One might wonder whether the analogue of this result is  true in the topological setting.  More precisely, if $X$ is a topological space, and if  $\pi: P \to X$ is projective fiber bundle, does there exist a decomposition of the topological $K$-theory of $P$ into summands involving the twisted topological $K$-theory of the base space?  Using our methods, we prove the following affirmative result, as a generalization of the Leray-Hirsch theorem to this context: 

\begin{thm}
Let $X$ be a finite CW-complex.  Let $\pi: P \to X$ be a bundle of rank $n-1$ projective spaces classified by a map $ \alpha: X \to BPGL_{n}(\C)$.  Let $\tilde{\alpha}: X \to B^{2} \C ^{\times} $ be the composition of this map along the map $ BPGL_{n}(\C) \to B^{2} \C ^{\times} \simeq K( \mathbb{Z}, 3).$  This gives rise to an element $\widetilde{\alpha} \in H^3( X, \mathbb{Z})$.  Then the topological $K$-theory of the total space $P$ decomposes as follows:

\[
KU^{*}(P) \simeq KU^{*}(X) \oplus KU^{\tilde{\alpha}}(X) ... \oplus KU^{\tilde{\alpha}^{n-1}}(X).
\]    
where $KU^{\widetilde{\alpha ^k}}(X)$ denotes the twisted $K$-theory with respect to the class $\widetilde{\alpha }^{k} \in H^{3}(X, \mathbb{Z})$.

\end{thm}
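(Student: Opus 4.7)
The strategy is to reduce to Quillen's algebraic Severi-Brauer theorem by applying the relative topological $K$-theory functor developed earlier, and then to transport the resulting sheaf decomposition from an algebraic base to $X$ via a Totaro-style approximation of $BPGL_n(\C)$.

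\emph{Step 1 (relative Quillen--Bernardara).} First replace $X$ by a smooth quasi-projective $\C$-scheme $Y$ carrying an algebraic Severi-Brauer scheme $Q \to Y$ of relative dimension $n-1$ with associated derived Azumaya algebra $\A$ of rank $n^2$. Bernardara's $\Perf(Y)$-linear refinement of Quillen's theorem supplies a semi-orthogonal decomposition
\[
\Perf(Q) = \langle \Perf(Y),\ \Perf(Y,\A),\ \ldots,\ \Perf(Y,\A^{\otimes n-1}) \rangle.
\]

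\emph{Step 2 (realizing the decomposition).} Since $K^{top}_Y$ is a localizing invariant of $\Perf(Y)$-linear categories, it sends this semi-orthogonal decomposition to a direct sum of sheaves of spectra on $Y(\C)$. Applying Theorem 1.1 to each piece gives
\[
K^{top}_Y(\Perf(Q)) \simeq \underline{KU(Y(\C))} \oplus \bigoplus_{k=1}^{n-1} \underline{KU^{\walpha^k}(Y(\C))},
\]
where $\walpha$ is the topological twist associated to $\A$. The left-hand side coincides with $\pi_* \underline{KU(Q(\C))}$ for $\pi: Q(\C) \to Y(\C)$, via Blanc's absolute theorem together with functoriality of $K^{top}$ along $\pi$.

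\emph{Step 3 (passing to topological $X$).} For an arbitrary finite CW-complex $X$, realize $BPGL_n(\C)$ as a filtered colimit of $Y_m(\C)$ for a system $\{Y_m\}$ of smooth quasi-projective $\C$-schemes (e.g. Totaro's construction), each carrying the universal $PGL_n$-torsor algebraically and hence an algebraic Severi-Brauer scheme $Q_m \to Y_m$. Since $X$ is finite, the classifying map $\alpha: X \to BPGL_n(\C)$ factors up to homotopy through some $Y_m(\C)$, and $P \to X$ is the pullback of $Q_m \to Y_m$. Pulling back the decomposition of Step 2 along $f: X \to Y_m(\C)$ and taking global sections yields
\[
KU^*(P) \simeq KU^*(X) \oplus \bigoplus_{k=1}^{n-1} KU^{\walpha^k}(X),
\]
using proper base change for $\pi_*$ on the left and naturality of twisted $KU$-theory under pullback on the right.

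\emph{Main obstacle.} The principal technical point is the localizing and sheaf-theoretic behavior of $K^{top}_X$: one must verify that, as a functor $\Catperf(X) \to Shv_{Sp}(X(\C))$, it splits semi-orthogonal decompositions of $\Perf(X)$-linear categories stalkwise and that the resulting summands are matched to twisted $KU$-sheaves functorially in $\alpha$ and in the base. Once these foundations are in place, Step 1 is classical and Step 3 is a standard cellular approximation argument.
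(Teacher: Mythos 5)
Your proposal follows essentially the same route as the paper: Bernardara's semi-orthogonal decomposition of $\Perf(Q)$ for an algebraic Severi--Brauer scheme, the localizing property of $K^{top}_Y$ to split that decomposition into a sum of sheaves of $KU$-modules, identification of the summands via the main theorem, Totaro's algebraic approximation of $BPGL_n(\C)$, compactness of the finite complex $X$ to factor the classifying map through a finite stage, and finally pullback and global sections.

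Two remarks on how you have compressed nontrivial steps. First, in Step 2 you justify $K^{top}_Y(\Perf(Q)) \simeq \pi_*\underline{KU(Q(\C))}$ by ``Blanc's absolute theorem together with functoriality of $K^{top}$ along $\pi$,'' but this push-pull compatibility is not formal: the paper explicitly notes that the natural transformation $\eta\colon K^{top}_X\circ\phi_* \to \phi_*\circ K^{top}_Y$ is not known to be an equivalence in general, and proves the special case needed here (Proposition~\ref{yoverx}) only for \emph{proper} $\phi$, by passing through the motivic realization description of $K^{top}_X$ and invoking Ayoub's $\mathbf{Betti}$ compatibility $\Bet\circ\phi_!\simeq\phi_!\circ\mathbf{Betti}_Y$ together with $\phi_!\simeq\phi_*$. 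That argument should be cited rather than waved at as ``functoriality.'' Second, in Step 3 you invoke proper base change for $\pi_*$; the paper sidesteps this entirely by working with local systems in $Loc_{Y^i(\C)}(\text{Mod}_{KU})$, where pullback along $f\colon X\to Y^i(\C)$ is just precomposition $X\to Y^i(\C)\to\mathcal{S}\to\text{Mod}_{KU}$, so identifying $f^*\underline{KU(P^i)}\simeq\underline{KU(P)}$ is immediate with no base-change theorem required. Your route is not wrong, but it adds a base-change step that the paper's local-system formulation makes unnecessary.
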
  
This topological analogue of Quillen's $K$-theoretic result has been hitherto unknown.  
\\

We briefly outline the contents of this paper.  We begin in Section \ref{section2} by giving some background on presentable $\infty$-categories, $\infty$-topoi, and $\mathcal{E}$-linear $\infty$-categories and review how dg-categories can be subsumed in the $\infty$-categorical framework.  In Section 3 we review the notion of a $\mathcal{C}$-valued sheaf for arbitrary $\mathcal{C}$ which we repeatedly need.  In Section 4 we introduce topological $K$-theory of dg categories, introduced by Blanc in \cite{blanc}.  In Section 5 we review the theory of derived Azumaya algebras and the Brauer stack.  In  Section 6, we use the \'{e}tale local triviality of derived Azumaya algebras to show that their \'{e}tale sheafified $K$-theory is an invertible object in the symmetric monoidal $\infty$-category of \'{e}tale sheaves of spectra over $X$.  In Section 7 we introduce our definition of topological $K$-theory over an arbitrary base scheme which we refer to as \emph{relative topological $K$-theory}. In Section 8  we  give a basic overview of twisted topological $K$-theory as needed to state and prove our main theorem.  In Section 9 we state and prove our main theorem displaying the topological $K$-theory of an derived Azumaya algebra    as an invertible local system of $KU$-modules on $X(\C)$.  Finally in Section 10 we state and prove our theorem on the decomposition of the complex $K$-theory of a bundle of projective spaces over a finite CW-complex $X$.   

\begin{ack}
The author would like to express his deepest gratitude to his thesis advisors, Benjamin Antieau and Brooke Shipley, for generously sharing their ideas and for their guidance, advice, and support over the years.

\end{ack}
\section{Higher categorical preliminaries} \label{section2}

We utilize the language of $\infty$-categories.  We assume the reader is acquainted with the basic ideas and definitions, but will review the notions that are central to the execution of this work.  For a more thorough introduction to the theory one should consult \cite{lurie2009, lurie2016}

\subsection{Presentable $\infty$-categories and $\infty$-topoi} 
We recall the definition of presentable $\infty$-categories.  The categories we work with will often be either presentable,  or will arise as subcategories of compact objects of  presentable $\infty$-categories.  Our arguments and constructions will frequently depend on this structure being present in the situation at hand.

\begin{defn}
Let $\mathcal{C}$ be an $\infty$-category and $\kappa$ be an infinite regular cardinal.  We may form the Ind-category Ind$_{\kappa}(\mathcal{C})$ which is the formal completion of $\mathcal{C}$ under $\kappa$-filtered colimits.  We say an $\infty$ category $\mathcal{D}$ is \emph{accessible} if there exists a small $\infty$-category $\mathcal{C}$ and some regular cardinal $\kappa$ such that $\mathcal{D} \simeq Ind_{\kappa}( \mathcal{C})$.  If $\mathcal{C}$ is accessible and has all small colimits, then it is \emph{presentable}    
\end{defn}

The collection of all presentable $\infty$-categories can be organized into an $\infty$-category Pr$^{\text{L}}$ with morphisms consisting of those functors which are left adjoints.  This category is closed monoidal so that the $\infty$-category of functors between any two presentable $\infty$-categories,  Fun$^{\text{L}}(A, B)$, is itself presentable.  One may also organize the collection of presentable $\infty$-categories together with \emph{right adjoint functors}; we denote the resulting $\infty$-category by  $\mathcal{P}r^{\text{R}}$.  According to \cite[Corollary 5.5.3.4]{lurie2009}, these two categories are anti-equivalent to each other.  Furthermore, this identification gives the following useful description of the symmetric monoidal tensor product in $\mathcal{P}r^{L}$:
\[
\mathcal{C} \otimes^{\text{L}} \mathcal{D} \simeq Fun^{R}(\ \mathcal{C}^{op}, \mathcal{D})
\]
 
We now give a characterization of $\infty$-topoi among presentable $\infty$-categories as those that satisfy a certain form of \emph{descent}.  A concise way to describe this is as follows, taken from \cite{umkehr}:

\begin{defn}
Let $\mathcal{X}$ be a presentable $\infty$-category. Let $\mathcal{O}_{\mathcal{X}} := Fun( \Delta^{1}, \mathcal{X})$ denote the arrow category of $\mathcal{X}$ and let $  \mathcal{O}_{\mathcal{X}} \to  \mathcal{X}  $ be functor sending a morphism to its target.  This has the property of being a \emph{Cartesian fibration}, in particular giving us a functor 
\[
\mathcal{X}^{op} \to \widehat{Cat_{\infty}}
\]

\noindent We say that $\mathcal{X}$ is an $\infty$-topos if the following descent condition holds: if $T \simeq colim_{\alpha}T_{\alpha}$ is a colimit in $\mathcal{X}$ then there is an induced map 
\[
\mathcal{X}_{/T} \to \text{lim}_{\alpha} \mathcal{X}_{/T{\alpha}}
\]
which is an equivalence.

\end{defn}
This definition in fact characterizes $\infty$-topoi uniquely amongst locally Cartesian closed presentable $\infty$-categories. 

\begin{defn}
Given two $\infty$-topoi $\mathcal{X}$ and $\mathcal{Y}$, we define  a \emph{geometric morphism}  $f: \mathcal{X} \to \mathcal{Y}$ to be an adjoint pair of functors
\[
f^{*}: \mathcal{Y} \rightleftarrows \mathcal{X} : f_{*}
\]
such that the left adjoint preserves finite limits.  
\end{defn}

For the purposes of this paper we may think of $\infty$-topoi as categories of sheaves on an $\infty$-categorical version of a site equipped with a Grothendieck topology. We will not be using too many intricacies of higher topos theory; we invite the reader from now to keep in mind the following specific examples that will arise: $Shv(X)$, the category of sheaves (of spaces) on a topological space $X$, or $Shv^{\acute{e}t}(X)$, the category of sheaves on the site of (smooth) schemes over a scheme $X$, endowed with the \'{e}tale topology.   
\\

\noindent We will need the definition of a locally constant object in an $\infty$-topos:    

\begin{defn} \label{bimbo}
Let $\mathcal{X}$ denote an arbitrary $\infty$-topos.  Let $\mathcal{F}$ be an object of $\mathcal{X}$.  We say that $\mathcal{F}$ is \textit{constant} if it lies in the essential image of the unique geometric morphism $\pi^{*}: \mathcal{S} \to \mathcal{X}$.  We will say that $\mathcal{F}$ is \emph{locally constant} if there exists a small collection of objects $\{U_{\alpha} \subseteq X\} $ such that the following conditions are satisfied: 
\begin{enumerate}
\item The objects $U_{\alpha}$ cover $\mathcal{X}$; that is, there is an effective epimorphism $\sqcup U_{\alpha} \to \mathbf{1}$ where $\mathbf{1}$ denotes the final object of $\mathcal{X}$. 
\item For each $\alpha \in S$, the product $\mathcal{F} \times U_{\alpha}$  is a constant object of the $\infty$-topos $\mathcal{X}/_{U_{\alpha}}$.  
\end{enumerate}
\end{defn}

\subsection{Symmetric monoidal $\infty$-categories}

We do not give a complete account of the theory of symmetric monoidal $\infty$-categories as we will only use a small part of the theory.  The inquisitive reader may find more in \cite[Chapter 2]{luriedescent}.  

Let $\Gamma$ denote the category of pointed finite sets, with morphisms pointed maps of finite sets.  An $\infty$-operad is then an $\infty$-category $\mathcal{O}^{\otimes}$ and a functor
\[
p: \mathcal{O}^{\otimes} \to N(\Gamma)^{\otimes}
\]
satisfying the conditions of \cite[Definition 2.1.1.10]{lurie2016}.  A symmetric monoidal $\infty$-category is then an $\infty$-operad  $\mathcal{C}^{\otimes}$ together  with a \emph{cocartesian fibration  of operads}\cite[2.1.2.18]{lurie2016}   

\[
p : \mathcal{C}^{\otimes} \to N(\Gamma)^{\otimes}
\]

\noindent such that

\begin{itemize}
\item for each $n \geq 0$, the associated functors $\mathcal{C}^{\otimes}_{[n]} \to \mathcal{C} ^{\otimes}_{[1]}$ determine an equivalence of $\infty$-categories 
$\mathcal{C}_{[n]} \simeq  \mathcal{C}^{n}_{[1]}$, where $\mathcal{C}_{[n]} = p^{-1}([n])$.  

\end{itemize}

\begin{rem}
We may reverse engineer this definition in order to obtain functors 
\[
\mathcal{C} \times \mathcal{C} \to \mathcal{C},
\]
reminiscent of the 1-categorical notion of a symmetric monoidal category.
\end{rem}

\begin{rem}
If $\mathcal{C}$ is a symmetric monoidal $\infty$-category,
then its homotopy category will be symmetric monoidal in the 1-categorical sense. 
\end{rem} 

\begin{defn}
Let $\mathcal{C}^{\otimes}$ be a symmetric monoidal  $\infty$-category and let $\mathcal{O}^{\otimes}$ be an $\infty$-operad. We define an $\mathcal{O}^{\otimes}$-algebra object of $\mathcal{C}$ to be 
a map of operads $\alpha: \mathcal{O}^{\otimes} \to \mathcal{C}^{\otimes}$(\cite[Definition 2.1.2.7]{lurie2016})
We let $\Alg_{\mathcal{O}}(\mathcal{C})$ denote the subcategory of $\infty$-category $Fun_{N(\Gamma)^{\otimes}}(\mathcal{O}^{\otimes}, \mathcal{C}^{\otimes})$ of functors over $N(\Gamma)^{\otimes}$ spanned by the $\infty$-operad maps. If $\mathcal{O}^{\otimes} = N(\Gamma)^{\otimes}$, then we will denote $\Alg_{N(\Gamma)}(\mathcal{C})$ by C$\Alg(\mathcal{C})$ and refer to this as the $\infty$-category of commutative algebra object of $\mathcal{C}$.
\end{defn}

We conclude this section by recalling the notion of a dualizable object in a symmetric monoidal $\infty$-category:

\begin{defn}
Let $\mathcal{C}^{\otimes}$ be a symmetric monoidal $\infty$-category and let $ A$ be an object of $\mathcal{C}$.  We say $A$ is \emph{dualizable} if  there exists an object $A^{\vee} $ together with a evaluation map $\epsilon: A \otimes A^{\vee} \to 1$ and a coevaluation map $\eta: 1 \to A^{\vee} \otimes A $ such that the composition:
\[
A \simeq A \otimes 1 \xrightarrow{1 \otimes \eta } A \otimes A^{\vee} \otimes A \xrightarrow{\epsilon \otimes 1} 1 \otimes A \simeq A 
\]
and 
\[
A^{\vee} \simeq 1 \otimes A^{\vee}  \xrightarrow{ \eta \otimes 1} A^{\vee} \otimes A \otimes A^{\vee} \xrightarrow{ 1 \otimes  \epsilon } A^{\vee} \otimes 1 \simeq  A^{\vee}
\]
is equivalent to the identity  \\
We say $A$ is \emph{invertible} if the evaluation and coevaluation morphisms are  moreover equivalences.  
\end{defn}   
 
\subsection{Stable $\infty$-categories} \label{stable}
Stable $\infty$-categories provide a natural framework for dealing with structures arising in algebraic geometry, algebraic topology and representation theory. In  addition, they serve as natural inputs for the algebraic $K$-theory functor.  The reader should consult \cite{blumberg} for a characterization of algebraic $K$-theory functor as an invariant of stable $\infty$-categories satisfying a certain universal property.  We briefly recall their definition.

\begin{defn}
An $\infty$-category is \emph{stable} if it has finite limits and colimits and if fiber sequences and cofiber sequences coincide.  A functor $F: \mathcal{C} \to \mathcal{D}$ between two stable $\infty$-categories is \emph{exact} if it preserves fiber sequences.
\end{defn}

\begin{defn}
The collection of all small stable, idempotent complete $\infty$-categories and exact functors can itself be organized into an $\infty$-category, which we denote by $\text{Cat}^{\text{perf}}$.  Similarly, the collection of all stable presentable $\infty$-categories, with morphisms left adjoint functors, can be also organized into an $\infty$-category, denoted by $\mathcal{P}r^{L, st}$.
\end{defn}

We remark that, as in \cite[Proposition 5.5.7.10]{lurie2009} there is an ind-completion functor 
 \[
Ind: Cat^{\text{perf}} \to \mathcal{P}r^{L,st}
\]
sending a small stable $\infty$-category $\mathcal{C}$ to $Ind(\mathcal{C})$, its formal cocompletion under $\omega$-filtered colimits which will be a stable presentable category. This induces an equivalence (\cite[Section 3.1]{blumberg}) between $\text{Cat}^{\text{perf}}$ and $\Prs$, the subcategory of  $\mathcal{P}r^{L,st}$ consisting of the compactly-generated stable presentable $\infty$-categories with morphisms left adjoint functors preserving compact object.  We implictly use this identification throughout the course of this paper. 

\begin{rem}
The $\infty$-category $\mathcal{P}r^{L, st}$ is symmetric monoidal with unit the $\infty$-category of spectra.  This symmetric monoidal structure is characterized by the property that maps out of the tensor product $A \otimes B$ correspond to bifunctors out of the  product $A \times B$ preserving colimits in each variable. As discussed in \cite[Section 4]{blumberg2}, the $\infty$-category $\Prs$ inherits a symmetric monoidal structure from  $\mathcal{P}r^{L,st}$; via the equivalence described above, there will symmetric monoidal structure on $\Catperf$, with unit $Sp^{\omega}$.  
\end{rem}

\subsection{$\mathcal{E}$-linear $\infty$-categories}{\label{linearcat}}

\begin{defn}
Let $\mathcal{E} \in$ CAlg$(\Catperf)$ be a small symmetric monoidal stable $\infty$-category.  We set $\text{Cat}^{\text{perf}}(\mathcal{E}) := \text{Mod}_{\mathcal{E}}(\text{Cat}^{\text{perf}})$.   Similarly, we may write \\ $\Cat(\mathcal{E}) := \text{Mod}_{Ind(\mathcal{E})}(\mathcal{P}r^{L,st}).$  We refer to objects of this category as $\mathcal{E}$-\emph{linear categories}; these are stable infinity categories $\mathcal{C}$ equipped with an exact functor $\mathcal{C} \otimes \mathcal{E} \to \mathcal{C}$. 
 
\end{defn}

\begin{exmp}
As an example,  we take the $\infty$-category of \emph{compact} $R$-modules $\mathcal{E}= \Perf(R)$ for $R$ any $\mathbb{E}_{\infty}$ ring spectrum.  We refer to these as $R$-linear $\infty$-categories and use the standard convention $\Catperf(R) := \Catperf(\Perf(R))$  

\end{exmp}

\begin{exmp}
Fix a quasi-compact, quasi-separated scheme  $X$. In this situation we set $\mathcal{E} = \Perf(X)$ the symmetric monoidal $\infty$-category of perfect complexes of $\mathcal{O}_{X}$-modules.  We let $\text{QCoh}(X) \simeq Ind(\Perf(X))$; this will be compactly generated by the perfect complexes. We set $\Catperf(X) := \Catperf(\Perf(X)).$
\end{exmp}

\begin{rem}
For the following, we will actually need the assumption that $\mathcal{E}$ is \emph{rigid}, so that all objects are dualizable.  This is for instance satisfied in the above examples, which encompasses the situations we will deal with.  Hence, we assume once and for all that $\mathcal{E}$ is rigid.   
\end{rem}

Let $\mathcal{C} \in \Catperf(\mathcal{E})$.  Then, for every object $a \in \mathcal{C}$, we may define a  functor $\mathcal{E} \to \mathcal{C}$ sending $e \to e \otimes a$.  This functor preserves finite colimits and therefore admits an Ind-right adjoint, $\mathcal{C}^{\mathcal{E}}(a, -) : \mathcal{C} \to Ind(\mathcal{E})$.  This gives $\mathcal{C}$ the structure of an $Ind(\mathcal{E})$-enriched category.  For the example above, with $\mathcal{C}$ an $R$-linear category, this means the mapping object, $\mathcal{C}(a,b)$ has the structure of an $R$-module for every object $a, b \in \mathcal{C}$.

We will often be concerned with $\mathcal{E}$-linear $\infty$-categories with the following properties which we now define:

\begin{defn}
Let $\mathcal{E} \in \text{CAlg}(\Catperf)$ and let $\mathcal{C} \in Cat^{\text{perf}}(\mathcal{E})$   We say $\mathcal{C}$ is
\begin{itemize}
\item \emph{smooth} if $\mathcal{C}$ is compact as  an object of $Ind( \mathcal{C}^{op} \otimes_{\mathcal{E}} \mathcal{C})$ 
\item \emph{proper} if for all objects $a, b$, the mapping object $\mathcal{C}(a, b) \in \mathcal{E}$ is compact as an object of $Ind(\mathcal{E})$
\item \emph{saturated} if it is both smooth and proper.   
\end{itemize}

\end{defn}

We have the following characterization of saturated $\mathcal{E}$-linear $\infty$-categories, originally found in \cite{blumberg} whose proof may be found in \cite{hoyois}.  

\begin{thm}
Let $\mathcal{C} \in \emph{Cat}^{\emph{perf}}(\mathcal{E})$.  Then $\mathcal{C}$ is saturated if and only if it is dualizable. 
\end{thm}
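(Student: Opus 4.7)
The plan is to prove both directions by identifying the candidate dual of $\mathcal{C}$ and tracing through the structure maps that witness dualizability. The key identification is that when $\mathcal{C}$ is a candidate for being dualizable in $\text{Cat}^{\text{perf}}(\mathcal{E})$, its dual should be given by the opposite category $\mathcal{C}^{op}$ (with its induced $\mathcal{E}$-linear structure coming from rigidity of $\mathcal{E}$). This rests on the Morita-type equivalence
\[
\text{Fun}^{L}_{Ind(\mathcal{E})}(Ind(\mathcal{C}), Ind(\mathcal{D})) \simeq Ind(\mathcal{C}^{op} \otimes_{\mathcal{E}} \mathcal{D}),
\]
whose compact objects on the right correspond to colimit-preserving functors sending compact objects to compact objects. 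Under this equivalence, the identity functor of $Ind(\mathcal{C})$ corresponds to an object $\text{id}_\mathcal{C} \in Ind(\mathcal{C}^{op} \otimes_\mathcal{E} \mathcal{C})$, while the $\mathcal{E}$-enriched Hom bifunctor $(a,b) \mapsto \mathcal{C}^{\mathcal{E}}(a,b)$ defines a functor $\mathcal{C}^{op} \otimes_\mathcal{E} \mathcal{C} \to Ind(\mathcal{E})$.

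For the implication \emph{smooth and proper $\Rightarrow$ dualizable}, I would set $\mathcal{C}^\vee := \mathcal{C}^{op}$ and construct the evaluation map as the $\mathcal{E}$-linear Hom functor $\epsilon : \mathcal{C}^{op} \otimes_\mathcal{E} \mathcal{C} \to \mathcal{E}$. Properness is precisely what ensures this lands in the compact objects $\mathcal{E} \subset Ind(\mathcal{E})$, so that $\epsilon$ is a morphism in $\text{Cat}^{\text{perf}}(\mathcal{E})$. The coevaluation $\eta: \mathcal{E} \to \mathcal{C} \otimes_\mathcal{E} \mathcal{C}^{op}$ is defined by sending the unit to the object corresponding to $\text{id}_\mathcal{C}$; smoothness says precisely that this object is compact in $Ind(\mathcal{C} \otimes_\mathcal{E} \mathcal{C}^{op})$, so $\eta$ is well-defined at the level of $\text{Cat}^{\text{perf}}(\mathcal{E})$. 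The triangle identities then follow from unwinding that the composite corresponds to the identity functor of $\mathcal{C}$ under the Morita equivalence above.

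For the converse, \emph{dualizable $\Rightarrow$ smooth and proper}, I would start with abstract data $(\mathcal{C}^\vee, \epsilon, \eta)$ and use the universal property of dualizability to identify $\mathcal{C}^\vee$ with $\mathcal{C}^{op}$ up to canonical equivalence: both represent the same functor $\mathcal{D} \mapsto \text{Fun}^{L}_{Ind(\mathcal{E})}(Ind(\mathcal{C}), Ind(\mathcal{D}))$ on $\text{Cat}^{\text{perf}}(\mathcal{E})$, using rigidity of $\mathcal{E}$ to pass between left and right adjoints. Given this identification, smoothness is immediate: the coevaluation $\eta$ witnesses that the object classifying the identity functor lies in the compact piece $\mathcal{C}^{op}\otimes_\mathcal{E}\mathcal{C}$, not merely in its ind-completion. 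Similarly, the evaluation $\epsilon$ is (equivalent to) the Hom bifunctor, and the fact that it lands in $\mathcal{E}$ rather than $Ind(\mathcal{E})$ forces all mapping objects $\mathcal{C}^\mathcal{E}(a,b)$ to be compact, which is properness.

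The principal technical obstacle, and where I expect to spend the most care, is the rigorous identification of $\mathcal{C}^\vee$ with $\mathcal{C}^{op}$. This requires threading carefully between the compactly generated presentable world $\mathcal{P}r^{L,st}_{Ind(\mathcal{E})}$ and its subcategory $\text{Cat}^{\text{perf}}(\mathcal{E})$ of compact objects, using that the tensor product in the former, when restricted to compactly generated objects with compact-preserving morphisms, computes the tensor product in the latter. Rigidity of $\mathcal{E}$ enters repeatedly: it ensures that the opposite construction respects the $\mathcal{E}$-module structure, that duals in $\mathcal{E}$ exist (allowing the Hom functor to take values in $\mathcal{E}$ when the arguments are compact), and that the expected Morita-type equivalences hold without presentability obstructions. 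Once these formalities are in place the two directions are essentially parallel, but without them the manipulation of evaluation and coevaluation loses meaning.
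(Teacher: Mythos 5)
The paper does not prove this statement; it defers to \cite{blumberg} and \cite{hoyois}. Your proof is correct and is essentially the argument in those references: $Ind(\mathcal{C})$ is unconditionally dualizable in $Mod_{Ind(\mathcal{E})}(\mathcal{P}r^{L,st})$ with dual $Ind(\mathcal{C}^{op})$, and since $Ind: \Catperf(\mathcal{E}) \to Mod_{Ind(\mathcal{E})}(\mathcal{P}r^{L,st})$ is a fully faithful symmetric monoidal functor, dualizability of $\mathcal{C}$ in $\Catperf(\mathcal{E})$ amounts precisely to whether this unconditional duality data lands in the image of $Ind$, i.e.\ whether evaluation and coevaluation preserve compact objects---which is properness and smoothness respectively, with the triangle identities then automatic by full faithfulness. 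This observation also streamlines the step you flag as the main technical obstacle in the converse: uniqueness of duals in the ambient presentable category forces $Ind(\mathcal{C}^{\vee}) \simeq Ind(\mathcal{C}^{op})$, hence $\mathcal{C}^{\vee} \simeq \mathcal{C}^{op}$, without needing to argue about representability of a functor on $\Catperf(\mathcal{E})$ directly.
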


\subsection{DG-categories} \label{dg}
We assume the reader is familiar with the basics of dg-categories.  Therefore we just describe how, for our purposes, they can be subsumed into the language of stable $\infty$-categories. For a fixed commutative ring $k$ there exists an $\infty$-category $Dg(k)$ encoding the homotopy theory of (small) dg-categories up to quasi-equivalences.  See \cite{keller} for a more thorough discussion.  There exists a ``dg-nerve" functor $N_{dg}: Dg(k) \to Cat_{\infty}$.  Moreover, a dg category $\mathcal{C}$ will be presentable (in the d.g. category sense) if and only if  the associated $\infty$-category $N_{dg}( \mathcal{C})$ is presentable.  Hence we can define the restriction $N_{dg} : Dg(k)^{lp} \to \mathcal{P}r^{L,st}$, where $Dg(k)^{lp}$ is the $\infty$-category of presentable dg-categories and colimit preserving dg-functors.  This functor is known to be conservative and reflects fully faithfullness.  It also  preserves the associated homotopy categories and therefore preserves the notion of compact generators.  One can therefore restrict even further to obtain $N_{dg}: Dg(k)^{cc} \to \mathcal{P}r^{L, st, \omega}$ where $Dg(k)^{cc}$ denotes compactly generated presentable dg categories.    
 \\
Now, $Dg(k)^{cc} \simeq Dg(k)^{idem}$, the presentable $\infty$-category of small dg-categories up to Morita equivalence.  In \cite{cohn2013}, it is shown that  there is a factorization 
\[
Dg(k)^{cc} \to Mod_{Mod_{k}}(\mathcal{P}r^{L, st, \omega}) \to \mathcal{P}r^{L, st, \omega}
\]
with the last map being the forgetful functor and the first map being an equivalence.  Hence, we are able to identify Cat$^{\text{perf}}(k)  \simeq Mod_{Mod_{k}} (\mathcal{P}r^{L, st, \omega})$  with the $\infty$-category of small idempotent complete $k$-dg categories. \\

This identification will allow us to work with dg-categories and categories of their invariants (for instance, their $K$-theory spectra) in the same setting.

\section{ $\mathcal{C}$-valued Sheaf categories} \label{sheaf}

We will be dealing with categories of sheaves valued in categories other than spaces.   We collect here some basic properties of these categories of sheaves.  

\begin{defn}
Let $\mathcal{X}$ be an $\infty$-topos and $\mathcal{C}$ be an arbitrary $\infty$-category containing all small limits.  We define the category of $\mathcal{C}$ valued sheaves on $\mathcal{X}$ to be  $Shv_{\mathcal{C}}(\mathcal{X}) := Fun^{lim}( \mathcal{X}^{op}, \mathcal{C})$, the category of limit-preserving functors from $\mathcal{X}^{op} $ to $\mathcal{C}$.  Note that if $\mathcal{C}$ is presentable, this can be alternatively described as  $Fun^{R}( \mathcal{X}^{op}, \mathcal{C})$, the internal hom in $\mathcal{P}r^{R}$ the $\infty$-category of presentable $\infty$-categories, together with right adjoints.  Furthermore, by \cite[Proposition 4.8.1.17]{lurie2016} this is equivalent to the tensor product $\mathcal{X} \otimes^{L} \mathcal{C}$ in $\mathcal{P}r^{L}$.  
  
 \end{defn} 

\begin{exmp}
As an example let $\mathcal{X} = Shv(X)$, the $\infty$-topos of sheaves on a topological space $X$ and let $\mathcal{C} = Sp$, the $\infty$-category of spectra.  Then $Shv_{Sp}(X) $ is the category of sheaves on the space $X$, valued in spectra.   
\end{exmp}
\begin{exmp}
Let $\mathcal{X}$ be an arbitrary $\infty$-topos and let $\mathcal{C} = \text{Cat}^{\text{perf}}$ be the category of small stable, idempotent complete $\infty$ categories.  This category itself is presentable (see \cite{blumberg}) and we can make sense of $Shv_{\text{Cat}^{\text{perf}}}(\mathcal{X})$ as $Fun^{R}(\mathcal{X}^{op}, \text{Cat}^{\text{perf}})$.
\end{exmp}

Let $\mathcal{T}$ be small $\infty$-category equipped with a Grothendieck topology, i.e. a Grothendieck topology on its underlying homotopy category.  We abuse notation (justifiably) and denote $Shv_{\mathcal{C}}(Shv(\mathcal{T}))$ by $Shv_{\mathcal{C}}(\mathcal{T})$ Then the inclusion $ Shv_{\mathcal{C}}( \mathcal{T}) \hookrightarrow \text{Pre}_{\mathcal{C}}(\mathcal{T})$ admits a left adjoint; as we are in the context of presentable $\infty$-categories, the appropriate theory of Bousfield localization applies, displaying sheafification as a localization functor.  Moreover, assuming $\mathcal{C}$ is symmetric monoidal, the morphisms which we invert by may be chosen so that this this is a symmetric monoidal functor; this gives us a natural symmetric monoidal structure on  $\text{Shv}_{\mathcal{C}}(\mathcal{T})$.  \\

We now define a locally constant object, in analogy with the definition in the setting of $\infty$-topoi.  

\begin{defn}
Let $\mathcal{X}$ be an arbitrary $\infty$-topos and let $\mathcal{C}$ be a presentable $\infty$-category.  We say first $\mathcal{F} \in \text{Shv}_{\mathcal{C}}(\mathcal{X})$ is \emph{constant} if it lies in the image of the morphism $\mathcal{C} \to \text{Shv}_{\mathcal{C}}(\mathcal{X})$ induced by the terminal geometric morphism.

We say $\mathcal{F}$ is \emph{locally constant} if there is a collection of objects $\{U_{\alpha} \} \in \mathcal{X}$ such that there is an effective epimorphism $\cup U_{\alpha} \to \mathbf{1}$ in $\mathcal{X}$ so that for every $\alpha \in S$, $\phi^{*}(\mathcal{F})$ is constant in  $\text{Shv}_{\mathcal{C}}(\mathcal{X}_{/U_{\alpha}})$.

\end{defn}

\section{Topological K-theory of $\C$-linear $\infty$-categories}
We recall in this section the definition of topological $K$-theory and recall some of its properties.  Most of this is can be found in \cite{blanc}.  

\subsection{Topological realization}
Recall the functor 
\[
X \mapsto X(\C)
\]
associating to a finite type scheme over the complex numbers its set of complex points.  Every  set $X(\C) := Map_{\text{Sch}/_{\C}}(Spec(\C), X)$ will come endowed with a natural topology, allowing us to view this as a functor landing in topological spaces.  We can then compose this with the singular functor $\text{Sing}(-): Top \to \mathcal{S}$ to obtain a simplicial set.  As in \cite{blanc}, we let $\Aff$ denote the category of affine $\C$-schemes of finite type.  This gives the following commutative triangle:
\[
\xymatrix{
\text{Aff}_{\C} \ar@{^{(}->}[d] \ar[r] &
\mathcal{S}  \\
\text{Pre(Aff}_{\C}) \ar[ru] }
\]

\noindent where the vertical arrow is the Yoneda embedding $\mathcal{Y}: \text{Aff}_{\C} \to \Pre(\text{Aff}_{\C})$ and the diagonal arrow is the left Kan extension of the ``complex points functor".  As this is a functor of presentable $\infty$-categories, we can take its stabilization to obtain the following functor:
\[
|| - ||_{\mathbb{S}} : \text{Pre}_{Sp}(\text{Aff}_{\C}) \to Sp
\]   
 which we refer to as spectral realization. Note here that, by \cite[Remark 1.4.2.9]{lurie2016} the stabilization $\text{Stab}(\Pre(\Aff))$, of  $\Pre(\Aff))$, is equivalent to  $\Pre_{Sp}(\Aff)$. Hence, given any presheaf of spectra $\mathcal{F}: \text{Aff}_{\C}^{op} \to Sp$, there is a spectrum $|| \mathcal{F}||_{\mathbb{S}}$ functorially associated to it.   

\subsection{Topological K-theory}

\begin{defn}
Let $T \in \Catperf(\C)$ be a $\C$-linear stable $\infty$-category, which by Section \ref{dg}, we think of as a dg-category over $\C$.  To $T$ we associate a presheaf of spectra $\underline{K(T)}: \text{Aff}_{\C} \to Sp$ sending 
\[
\text{Spec}(A) \mapsto K(T \otimes_{\C} \Perf(A)),
\]    
the $K$-theory of the category $T \otimes_{\C} \Perf(A)$ where ``$\otimes_{\C}$" is the tensor product in Cat$^{\text{perf}}(\C)$ described in Section \ref{stable}.  We define  the \emph{semi-topological k-theory}, $K^{st}(T) : = || \underline{K(T)} ||_{\mathbb{S}}$ to be the realization of this presheaf.        
\end{defn} 

It is shown in \cite{blanc} that $K^{st}(\mathbb{1}) = ku$ the connective $K$-theory spectrum.  Since $|| - ||_{\mathbb{S}}$ is symmetric monoidal, it follows that $K^{st}(T)$ is a $ku$-module for any dg-category $T$.  Hence, there exists an action by the Bott element $\beta \in \pi_{2}(ku)$.  The topological $K$-theory of $T$ is defined as follows:

\begin{defn}
Let $T \in \text{Cat}^{\text{perf}}(\C)$ and let $K^{st}(T)$ be semi-topological $K$-theory of $T$, as defined above.  The  \emph{topological k-theory} of $T$ is defined to be 
\[
K^{top}(T) := L_{KU}(K^{st}(T)) \simeq K^{st}(T) \wedge_{ku} KU,
\]
the inversion of $K^{st}(T)$ with respect to the Bott element.   
\end{defn}

We collect here several structural properties about topological $K$-theory.

\begin{thm}
Let $K^{top}: \emph{Cat}^{\emph{perf}}(\C) \to Sp$ be the functor of topological $K$-theory.  Then, 

\begin{enumerate}[label=\alph*.,start=1]
    \item $K^{top}(-)$ commutes with filtered colimits, is Morita  invariant and is a localizing invariant in that it sends exact sequences of  dg-categories $\mathcal{A} \to \mathcal{B} \to \mathcal{C}$ to fiber sequences of spectra.  
    \item $K^{top}$ is lax symmetric monoidal.  
    \item If $X \xrightarrow{\phi} Spec(\C)$ is a separated scheme of finite type, then there exists a functorial equivalence
\[
K^{top}(\emph{Perf}(X)) \simeq KU^{*}(X(\C)).
\]
Here, $\emph{Perf}(X)$ denotes the dg (or $\C$-linear $\infty$-category) of perfect complexes of quasi-coherent-$\mathcal{O}_{X}$ modules on $X$.
\end{enumerate}

\end{thm}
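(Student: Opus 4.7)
My plan is to treat (a), (b), (c) in sequence, bootstrapping from known properties of algebraic $K$-theory and the constituent functors, and reserving the analytic input for (c). The basic observation is that $K^{top}$ factors as the composition
\[
\Catperf(\C) \xrightarrow{\,\underline{K(-)}\,} \text{Pre}_{Sp}(\Aff) \xrightarrow{\,||-||_{\mathbb{S}}\,} Sp \xrightarrow{\,L_{KU}\,} Mod_{KU},
\]
where $\underline{K(T)}(\text{Spec}(A)) = K(T \otimes_\C \text{Mod}_A^{\omega})$.

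For (a), I would check each factor. Nonconnective algebraic $K$-theory on $\Catperf(\C)$ commutes with filtered colimits, is Morita invariant, and is localizing in the sense of Blumberg--Gepner--Tabuada \cite{blumberg}. The base-change functor $T \mapsto T \otimes_\C \text{Mod}_A^{\omega}$ is exact and preserves filtered colimits in $T$ for each $A$, so these properties pass to $\underline{K(-)}$ pointwise, hence hold in $\text{Pre}_{Sp}(\Aff)$. Spectral realization is a left adjoint (it is the stabilization of the left Kan extension of the complex-points functor along the Yoneda embedding), so it preserves colimits and fiber sequences. Finally, $L_{KU} = -\wedge_{ku}KU$ is a symmetric monoidal smashing localization and in particular preserves colimits and cofiber sequences. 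Composing, (a) follows. For (b), each of $K$, $\underline{K(-)}$, $||-||_\mathbb{S}$, and $L_{KU}$ is (lax) symmetric monoidal: $K$-theory on $\Catperf$ is lax symmetric monoidal by its universal property; the base-change construction is symmetric monoidal in $T$ because $\otimes$ is in $\Catperf(\C)$; spectral realization is symmetric monoidal because the underlying Kan extension carries the Cartesian structure on $\Aff$ to the Cartesian (and hence smash after stabilization) structure on $\mathcal{S}$; and Bott inversion is a symmetric monoidal localization.

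For (c), the heart of the theorem, I would begin by identifying the presheaf $\underline{K(\Perf(X))}$ with $\text{Spec}(A) \mapsto K(X \times_\C \text{Spec}(A))$, using $\Perf(X) \otimes_\C \text{Mod}_A^{\omega} \simeq \Perf(X \times \text{Spec}(A))$ for $X$ qcqs. Thus $K^{st}(\Perf(X))$ is exactly the \emph{semi-topological $K$-theory} of $X$ in the sense of Friedlander--Walker, and the claim reduces to the theorem of Friedlander--Walker identifying Bott-inverted semi-topological $K$-theory of a finite-type $\C$-scheme with $KU(X(\C))$. The strategy to recover this from the present setup would be: first verify the case $X = \text{Spec}(\C)$, where the realization of $A \mapsto K(A)$ is computed to be $ku$ (this is the original Friedlander--Walker input and yields $K^{top}(\mathbb{1}) \simeq KU$); next, bootstrap to smooth affine $X$ by using the projection formula for $-\otimes_\C \text{Mod}_A^\omega$ and a change-of-variables argument that realizes sections over $\mathbb{A}^n_\C$ as $Sing$ of $\C^n$; then extend to smooth separated $X$ of finite type by Zariski (or Nisnevich) descent, using part (a) to ensure that $K^{top}$ carries the Mayer--Vietoris squares on both sides to equivalences, and that $X \mapsto KU(X(\C))$ satisfies the same descent on the analytic side; finally, descend to arbitrary separated finite-type $X$ via cdh descent and resolution of singularities, noting that both $K^{top}$ and $KU\circ(-)(\C)$ are nilinvariant and cdh-local on such schemes.

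The main obstacle is the base case $\text{Spec}(\C)$: the identification $K^{st}(\C) \simeq ku$ is not formal — it requires genuinely analytic input comparing algebraic and topological vector bundles on affine varieties, and this is exactly where the argument leaves the realm of categorical bookkeeping. Once this input is available (via \cite{blanc} or Friedlander--Walker), the rest of (c) is a descent argument built on parts (a) and (b).
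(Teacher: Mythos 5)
The paper's ``proof'' of this theorem is a one-line citation to Blanc's Theorem~1.1, so you are attempting something strictly more ambitious: an actual argument. Your reasoning for parts (a) and (b) is formally sound modulo one technical point you gloss over: you need that $-\otimes_{\mathbb C}\mathrm{Mod}_A^{\omega}$ carries exact sequences in $\mathrm{Cat}^{\mathrm{perf}}(\mathbb C)$ to exact sequences (i.e., preserves fully faithfulness of the first map, not merely cofibers). This is true here because $\mathrm{Mod}_A^{\omega}$ is dualizable (equivalently, flat over $\mathbb C$), but it deserves a sentence. You should also flag that your argument silently uses \emph{nonconnective} $K$-theory for the presheaf; Blanc's construction starts from connective $K$-theory, and the agreement of the two realizations is itself a nontrivial proposition in his paper.

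The real gap is in your step from $\mathrm{Spec}(\mathbb C)$ to smooth affine $X$. You propose to ``bootstrap'' via a projection formula for $-\otimes_{\mathbb C}\mathrm{Mod}_A^{\omega}$. This does not work: the presheaf $\mathrm{Spec}(A)\mapsto K(B\otimes_{\mathbb C}A)$ is not of the form $(\text{representable at }\mathrm{Spec}\,B)\otimes\underline K$ in $\mathrm{Pre}_{Sp}(\mathrm{Aff}_{\mathbb C})$ — that tensor would evaluate to $\mathrm{Map}(\mathrm{Spec}\,A,\mathrm{Spec}\,B)_+\wedge K(A)$, not $K(B\otimes A)$. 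Moreover, even if it were, realization preserving $\otimes$ would produce $X(\mathbb C)_+\wedge ku$, i.e.\ $ku$-\emph{homology} of $X(\mathbb C)$, whereas the theorem asserts $ku$-\emph{cohomology}; these agree only under a duality hypothesis you have not imposed. The actual analytic input in Blanc's proof is not the single base case $X=\mathrm{Spec}(\mathbb C)$ but the Friedlander–Walker comparison theorem for \emph{all} smooth affine complex varieties, which compares algebraic and holomorphic $K$-theory and is where the genuine content lies. Once you have the smooth affine case, your descent plan (Zariski/Nisnevich for smooth separated $X$, then proper/cdh descent plus resolution of singularities for the general case) is the right shape and broadly matches what Blanc does.
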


\begin{proof}
This is \cite[Theorem 1.1]{blanc}.
\end{proof}

\section{Derived Azumaya Algebras and the Brauer stack} \label{tikka}

To state our main result we first give a brief account of the theory of derived Azumaya algebras due to To\"{e}n in \cite{toenaz} and Antieau-Gepner in \cite{antieaugepner}. We fix $R$ to be an arbitrary base connective $\mathbb{E}_{\infty}$-ring.  For our purposes $R$ will be  the Eilenberg-Maclane spectrum $H \C$ or a commutative algebra over $H \C$.

\begin{defn}
An $R$ algebra $A$ is an Azumaya $R$-algebra if $A$ is a compact generator of $Mod_{R}$  and if the natural $R$-algebra map
\[
A \otimes_{R} A^{op} \to End_{R}(A)
\]
is an equivalence.  
\end{defn}

Since $End_{R}(A)$ is Morita equivalent to the base ring, this means that if $A$ is an Azumaya algebra, then there exists some $A^{op}$ such that  $A \otimes_{R} A^{op}$ is Morita  equivalent to $R$.  This gives rise to the following characterization of Azumaya algebras.  See \cite{antieaugepner} for a proof.  

\begin{prop}
Let $\mathcal{C}$ be a compactly generated $R$-linear category.  Then $C$ is invertible in $Cat_{R, \omega}:= \emph{Mod}_{\emph{Mod}_R}(\Prs) \simeq \emph{Cat}^{\emph{perf}}(R)$ if and only if $\mathcal{C}$ is equivalent to $\emph{Mod}_{A}$ for an Azumaya R-Algebra A.

\end{prop}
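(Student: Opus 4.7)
The plan is to interpret invertibility in $\text{Cat}^{\text{perf}}(R)$ symmetric-monoidally, so $\mathcal{C}$ is invertible iff there exists $\mathcal{D}$ with $\mathcal{C} \otimes_R \mathcal{D} \simeq \text{Mod}_R$, and reduce both implications to the $\infty$-categorical Schwede--Shipley theorem combined with the identification $\text{Mod}_A \otimes_R \text{Mod}_B \simeq \text{Mod}_{A \otimes_R B}$ valid for $\mathbb{E}_1$-$R$-algebras $A, B$.

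For the ``if'' direction, assume $A$ is Azumaya. Then $\text{Mod}_A \otimes_R \text{Mod}_{A^{op}} \simeq \text{Mod}_{A \otimes_R A^{op}} \simeq \text{Mod}_{\text{End}_R(A)}$, the second equivalence coming from the Azumaya hypothesis. Since $A$ is a compact generator of $\text{Mod}_R$, Schwede--Shipley produces an equivalence $\text{Mod}_{\text{End}_R(A)} \simeq \text{Mod}_R$, so $\text{Mod}_{A^{op}}$ is a concrete inverse to $\text{Mod}_A$.

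For the converse, suppose $\mathcal{C}$ is invertible with inverse $\mathcal{D}$. The main obstacle is producing a single compact generator $c \in \mathcal{C}$. I would analyze the coevaluation $\eta : \text{Mod}_R \to \mathcal{D} \otimes_R \mathcal{C}$: its image $\eta(R)$ is a compact generator of $\mathcal{D} \otimes_R \mathcal{C} \simeq \text{Mod}_R$, and the standard description of compact objects in a tensor product of compactly generated $R$-linear categories writes $\eta(R)$ as a finite colimit of objects $d_i \otimes c_i$ with $c_i \in \mathcal{C}^{\omega}$ and $d_i \in \mathcal{D}^{\omega}$. Tensoring back by $\mathcal{C} \simeq \mathcal{D}^{-1}$ and using that $\eta(R)$ generates, one concludes that the $c_i$ jointly generate $\mathcal{C}$, and hence $c := \bigoplus_i c_i$ is a single compact generator. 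Setting $A := \text{End}_{\mathcal{C}}(c)$ and invoking Schwede--Shipley yields an $R$-linear equivalence $\mathcal{C} \simeq \text{Mod}_{A^{op}}$.

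It remains to verify the Azumaya condition on $A$. The inverse $\mathcal{D}$ must coincide, up to equivalence, with the symmetric-monoidal dual of $\mathcal{C}$, which is $\text{Mod}_A$. The invertibility equivalence thus becomes $\text{Mod}_{A^{op} \otimes_R A} \simeq \text{Mod}_R$, and unwinding the equivalence through Schwede--Shipley one sees that it sends the free $A \otimes_R A^{op}$-module of rank one to $A$ regarded as a bimodule over itself, with the induced map on endomorphism $R$-algebras being precisely the multiplication $A \otimes_R A^{op} \to \text{End}_R(A)$. This map must therefore be an equivalence, which is the defining condition for $A$ to be Azumaya.
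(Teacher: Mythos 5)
The paper sketches only the easy direction (Morita triviality of $\text{End}_R(A)$) and defers the converse to Antieau--Gepner, so there is no internal proof to compare against; your argument reproduces the standard route from that reference (Schwede--Shipley plus duality data), and the overall structure is right. Two places need to be tightened.

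First, the ``tensoring back'' step is the real content of producing a single compact generator and is a gap as written. Compact objects of $\mathcal{D}\otimes_R\mathcal{C}$ are \emph{retracts} of finite colimits of $d_i\otimes c_i$, not merely finite colimits; but more importantly, the actual mechanism forcing the $c_i$ to generate $\mathcal{C}$ is the triangle identity: the composite
\[
\mathcal{C} \xrightarrow{\ \mathrm{id}\otimes\eta\ } \mathcal{C}\otimes_R\mathcal{D}\otimes_R\mathcal{C} \xrightarrow{\ \epsilon\otimes\mathrm{id}\ } \mathcal{C}
\]
is homotopic to the identity. Applying it to a compact $c\in\mathcal{C}$ exhibits $c$ as a retract of a finite colimit of $\epsilon(c\otimes d_i)\otimes c_i$, and since $\epsilon(c\otimes d_i)$ is a perfect $R$-module each such term lies in the thick subcategory of $\mathcal{C}^\omega$ generated by $c_i$. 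This is what ``tensoring back by $\mathcal{C}\simeq\mathcal{D}^{-1}$'' should unpack to; stated as-is, it does not obviously yield the conclusion.

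Second, recall the paper's definition of Azumaya has two clauses: (a) $A$ is a compact generator of $\mathrm{Mod}_R$, and (b) $A\otimes_R A^{op}\to\mathrm{End}_R(A)$ is an equivalence. Your ``unwinding'' paragraph addresses only (b). Clause (a) does follow from the same equivalence $\mathrm{Mod}_{A^{op}\otimes_R A}\simeq\mathrm{Mod}_R$, because an equivalence carries the compact generator $A^{op}\otimes_R A$ to a compact generator of $\mathrm{Mod}_R$ and you are asserting that this image is $A$; but you should say this explicitly. Moreover, both (a) and (b) rest on the explicit identification of the evaluation for module categories, namely that it is $N\otimes M\mapsto N\otimes_A M$ on objects, so that the free rank-one bimodule goes to $A$ and the induced map on endomorphism algebras is the action map $A\otimes_R A^{op}\to\mathrm{End}_R(A)$. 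That identification of the duality data for $\mathrm{Mod}_A$ is not hard, but it is where the work lives, and ``unwinding the equivalence through Schwede--Shipley'' passes over it without argument or citation.
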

 
Azumaya algebras were classically defined with the property that they are equivalent  to matrix algebras \'{e}tale-locally.  In the derived setting this can be stated as follows:

\begin{thm} \label{localtriviality}
If $A$ is a derived Azumaya algebra over a connective $E_{\infty}$ ring $R$ there exists a faithfully flat \'{e}tale $R$-algebra $S$ with the property that $A \otimes_{R} S$ is Morita equivalent to $S$.  
\end{thm}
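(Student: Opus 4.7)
The plan is to leverage the characterization of Azumaya algebras as precisely those $R$-algebras whose module category is invertible in $\mathrm{Cat}^{\mathrm{perf}}(R)$, and to reduce the claim to an assertion about pointwise triviality of this invertible object. Saying that $A \otimes_R S$ is Morita equivalent to $S$ amounts to producing a compact generator $M$ of $\mathrm{Mod}_{A \otimes_R S}$ whose endomorphism algebra is $S$ itself; equivalently, it amounts to trivializing the invertible $S$-linear category $\mathrm{Mod}_A \otimes_{\mathrm{Mod}_R} \mathrm{Mod}_S$. So throughout, I would think of the problem as: the Brauer ``space'' (the space of invertible $R$-linear presentable categories) is an étale hypersheaf on $\mathrm{CAlg}_R^{\mathrm{cn}}$ whose stalks I want to show are trivial.

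First, I would set up the étale-descent framework. Because $A \mapsto \mathrm{Mod}_A$ satisfies étale (in fact flat) hyperdescent on connective $\mathbb{E}_\infty$-algebras, it suffices to exhibit a splitting after passage to an étale cover of each point of $\mathrm{Spec}(R)$, and then patch using that the obstruction presheaf to splitting is itself an étale sheaf of spaces. Concretely I would reduce, for a fixed point $\mathfrak{p} \in \mathrm{Spec}(\pi_0 R)$, to a strictly henselian local neighborhood: replacing $R$ by its strict henselization $R^{sh}$ at $\mathfrak{p}$ (built by taking a filtered colimit of étale $R$-algebras), the residue field $\kappa = \pi_0 R^{sh}/\mathfrak{m}$ is separably closed, and I aim to show $A \otimes_R R^{sh}$ is Morita equivalent to $R^{sh}$.

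The next step is the residue-field case. Over the separably closed field $\kappa$, the $\kappa$-algebra $A_\kappa := A \otimes_R \kappa$ is still Azumaya. Compactness of $A_\kappa$ as an $\kappa$-module together with the invertibility condition $A_\kappa \otimes_\kappa A_\kappa^{\mathrm{op}} \simeq \mathrm{End}_\kappa(A_\kappa)$ forces $\pi_* A_\kappa$ to be a graded classical Azumaya algebra over $\pi_* \kappa$; since $\pi_* \kappa$ is concentrated in degree $0$, this forces $A_\kappa$ to be discrete, and then the classical structure theorem for Azumaya algebras over a separably closed field identifies $\pi_0 A_\kappa$ with a matrix algebra $M_n(\kappa)$, which is Morita trivial. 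Next I would lift this splitting from $\kappa$ up to $R^{sh}$: the obstruction theory for $\mathbb{E}_\infty$-algebras (or equivalently, deformation theory of compact generators governed by the cotangent complex, as in Lurie's \emph{Higher Algebra}) is well-behaved over a complete local base and, combined with the fact that $R^{sh}$ is a filtered colimit of henselian pairs, allows me to inductively lift a rank-$n$ compact generator over $\kappa$ to a compact generator $M$ over $R^{sh}$ with $\mathrm{End}_{A \otimes R^{sh}}(M) \simeq R^{sh}$.

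Finally, I would descend from $R^{sh}$ to an actual étale $R$-algebra. Since $R^{sh}$ is by construction a filtered colimit of étale $R$-algebras and the datum of a Morita equivalence $\mathrm{Mod}_{A \otimes_R S} \simeq \mathrm{Mod}_S$ is a compact piece of structure (it is determined by a single compact generator together with finitely many coherence data), the splitting constructed over $R^{sh}$ must already be defined over some finite étale $R$-algebra $S$; replacing $S$ by a faithfully flat étale cover of $R$ obtained by taking such $S$ at every point of $\mathrm{Spec}(R)$ and using quasi-compactness (or the analogous filtered-colimit-of-étale argument pointwise) produces the required $S$. The main obstacle, as I see it, is the lifting step from $\kappa$ to $R^{sh}$: while the obstruction-theoretic machinery is in place, one must check carefully that the Azumaya condition (an equivalence, not just a map) is preserved along the inductive tower, which is where the compact-generator / cotangent-complex bookkeeping requires the most care.
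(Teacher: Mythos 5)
Your overall strategy --- reduce to the strict henselization, trivialize over the separably closed residue field, lift upward, and spread out to an \'etale neighborhood --- is broadly the strategy that Antieau--Gepner follow for their Theorem 5.11, which the paper simply cites. However, the two load-bearing steps both have real gaps. The residue-field step does not close: the K\"unneth comparison $\pi_*(A_\kappa) \otimes_\kappa \pi_*(A_\kappa)^{\mathrm{op}} \cong \mathrm{End}^{gr}_\kappa(\pi_* A_\kappa)$ does \emph{not} force $\pi_* A_\kappa$ to be concentrated in degree $0$. For instance, for any finite-dimensional $\mathbb{Z}$-graded vector space $V$ over $\kappa$, the graded algebra $B = \mathrm{End}^{gr}_\kappa(V)$ satisfies $B \otimes B^{\mathrm{op}} \cong \mathrm{End}^{gr}(B)$ without being concentrated in degree $0$, so your dimension-count argument has no teeth. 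What is actually needed is the theorem (due to To\"en, reproved in Antieau--Gepner) that over a field $\kappa$ the derived Brauer group agrees with the classical one, which in turn rests on a genuine structural analysis of invertible compactly generated $\kappa$-linear $\infty$-categories; discreteness of $A_\kappa$ is neither true in general nor what you need --- Morita triviality is.

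The lifting step from $\kappa$ to $R^{sh}$ also conflates two distinct arguments. Going from $\pi_0 R^{sh}$ to $R^{sh}$ proceeds along the Postnikov tower by square-zero extensions, where cotangent-complex obstruction theory genuinely applies. But going from $\kappa$ up to the discrete ring $\pi_0 R^{sh}$ is \emph{not} a nilpotent thickening, and your invocation of obstruction theory ``over a complete local base'' is misplaced: $\pi_0 R^{sh}$ is henselian, not complete. One either passes to the completion and invokes Artin approximation, or (as To\"en and Antieau--Gepner do) one appeals to a separate theorem about existence and spreading out of compact generators for twisted module categories. You rightly flag this as ``the main obstacle,'' but that is precisely where the proof lives; as written, the argument does not establish it. Finally, the spreading-out step --- replacing ``finitely many coherence data'' with a precise statement that the Morita-trivializing generator, being compact, and the trivialization itself are detected at a finite stage of the filtered colimit $R^{sh} = \mathrm{colim}\, S_i$ --- is correct in spirit but needs to be formulated in terms of the fact that $\mathrm{Cat}^{\mathrm{perf}}(-)$ commutes with filtered colimits of rings and that being a compact generator with prescribed endomorphisms is a property created in the colimit.
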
 

\begin{proof}
See for instance, \cite[Theorem 5.11]{antieaugepner}.
\end{proof}

We define the \emph{Brauer space} of $R$ to be $\text{Pic}(Cat_{R, \omega})$. However, we will give a ``global" and hence more general definition of the Brauer space, where the input is that of an \'{e}tale $\infty$-sheaf, as opposed to a commutative ring. For this we work in an $\infty$-category of sheaves on the category $\text{CAlg}_{R}^{cn}(Sp)$ of connective commutative algebras over $R$ endowed with a derived version of the \'{e}tale topology. We denote this category by $\text{Shv}^{\acute{e}t}_{R}$.  See \cite[section 2]{antieaugepner} for more details on this.  
Now let $\mathcal{C}at_R: \text{Aff}_{R}^{op} \to   Cat_{\infty}$ be the functor sending a commutative $R$-algebra $S$ to $Cat_{S}$.  Let $Cat^{desc}_{S} \subset Cat_{S}$ be the subcategory of $S$-linear categories satisfying \'{e}tale hyperdescent.  From this, we may define the subfunctor  $\mathcal{C}at_{R,}^{desc} \subset \mathcal{C}at_{R}$  of $R$-linear categories with $R$ hyper-descent, such that $ S \mapsto Cat_{S}^{desc}$.  By the arguments of \cite[Theorem 7.5]{luriedescent}, this defines an \'{e}tale hyperstack on $Shv^{\acute{e}t}_{R}$.   

Let now $\mathcal{A}lg : Aff_{R}^{op} \to Pr^{L}$ be the functor sending $Spec(S) \mapsto \text{Alg}(\text{Mod}_{S})$; by \cite{lurie2016} this defines a hyperstack on $Shv^{\acute{e}t}_{R}$.  There exists an obvious morphism of stacks $\mathcal{A}lg \to \mathcal{C}at^{desc}$ induced by the functor $ A \to Mod_{A}$ sending an algebra to its associated category of modules.  Let $\mathcal{A}z$ be the subfunctor of $\mathcal{A}lg$ corresponding to taking Azumaya Algebras.  
  
Finally, let $\mathbf{Az}, \mathbf{Alg}$ and $\mathbf{Pr}$ be the underlying $\infty$-sheaves (restrict to the maximal subgroupoid sectionwise).  We define the Brauer stack as follows:

\begin{defn}
Let $\mathbf{Az} \to \mathbf{Pr}$ be the induced map of $\infty$-sheaves defined above.  We define the \emph{Brauer stack} to be the \'{e}tale sheafification of the image of this map.  Given an \'{e}tale sheaf $X \in   Shv^{\acute{e}t}_{R}$, we let $\mathbf{Br}(X) := Map_{Shv^{\acute{e}t}_{R}}(X, \mathbf{Br})$ denote the \emph{Brauer space} of $X$.  Finally, we write $Br(X) := \pi_{0}(\mathbf{Br}(X)$ as the \emph{Brauer group} of $X$.  

\end{defn}

If $X =Spec(S)$ for $S$ a connective $E_{\infty}$-$R$-algebra then it is easy to see $\mathbf{Br}(X) \simeq Pic(Cat_{S, \omega})$ so we see that this definition agrees with the  aforementioned one .  

The Brauer stack may be viewed as a delooping of the Picard stack.  Indeed, since Azumaya algebras are \'{e}tale locally equivalent to the ground ring,    it follows that $\mathbf{Br}$ is a connected sheaf; moreover it is equivalent to the classifying space of the trivial Brauer class, which is the map $\mathcal{M}od: Spec(R) \to \mathbf{Br} \to \mathbf{Pr}$ sending $Spec(S) \mapsto \text{Mod}_{S}$.  The sheaf of auto-equivalences  of $\mathcal{M}$od is presicely the sheaf of line bundles.  Hence,
\[
\Omega \mathbf{Br} \simeq \mathbf{Pic}.
\]

This identification allows us to better understand the homotopy of $\mathbf{Br}$.  Recall the following split fiber sequence of \'{e}tale sheaves: 

\[
\mathbf{BGL}_{1} \to \mathbf{Pic}  \to \mathbb{Z}.
\]
This may be delooped to obtain the fiber sequence $\mathbf{B}^{2}\mathbf{GL}_{1} \to \mathbf{Br} \to \mathbf{B} \mathbb{Z}$.  This gives the following decomposition, for an \'{e}tale sheaf $X$, of the derived Brauer group 
\[
\pi_{0} \mathbf{Br}(X) \simeq H^{2}_{\acute{e}t}(X, \mathbb{G}_{m}) \times H^{1}_{\acute{e}t}(X, \mathbb{Z}).
\]       

\begin{rem}
Let $X$ be a quasi-compact, quasi-separated scheme.  Let $\alpha \in Br(X) = \pi_{0}(\mathbf{Br}(X))$ be a class in the Brauer group represented by a map $ \alpha : X \to \mathbf{Br}$ in $Shv^{\acute{e}t}_{\C}$.  By   \cite[Theorem 6.17]{antieaugepner}, this map will lift to a map $\alpha: X  \to \mathbf{Az}$ classifying a derived Azumaya algebra over $X$.  We may therefore think of $ H^{2}_{\acute{e}t}(X, \mathbb{G}_{m}) \times H^{1}_{\acute{e}t}(X, \mathbb{Z})$ as a Morita equivalence classes of derived Azumaya algebras.  In particular we will often be concerned with Brauer classes living in the \'{e}tale cohomology group $H^{2}_{\acute{e}t}(X, \mathbb{G}_{m})$.  

\end{rem}

\begin{defn} \label{twistedsheavescategory}
Let $X$ be a quasi-compact, quasi-separated scheme, and let $\alpha: X \to \mathbf{Br} \to \mathbf{Pr} $. We define, the $\infty$-category of $\alpha$-twisted $\mathcal{O}_{X}$-modules to be   
\[
\text{QCoh}(X, \alpha) := \text{holim}_{f:Spec(R) \downarrow X} \Mod_{\alpha \circ  f} 
\]
Note that for every $Spec(R) \to X$, we have $Mod_{\alpha \circ f} \in  \mathcal{P}r^{l, st}$. This limit exists and is computed in $\mathcal{P}r^{l, st}$. Finally, we set $\Perf(X, \alpha) := \text{QCoh}(X, \alpha)^{\omega}$, the  small subcategory of compact objects of $\text{QCoh}(X, \alpha)$.
\end{defn}

\section{Stacks of presentable $\infty$-categories \& \'{e}tale $K$-theory}
As announced in the introduction, we construct,  for any scheme, quasi-compact, quasi-separated over $\C$, a relative version of topological $K$-theory $K^{top}_{X}(T)$ with input  a $\text{Perf}(X)$-linear $\infty$-category and output a sheaf of $KU$-module spectra on $X(\C)$.  We will particularly be concerned with the values of this functor when $T$ is the $\infty$-category of perfect complexes  of $\mathcal{O}_{X}$-modules $\text{Perf}(X, \alpha)$, for $\alpha \in \pi_{0} \mathbf{Br}(X)$.  In this section, we describe an equivalent definition of $\mathbf{Br}(X)$, which will appear more naturally within our constructions.  We do this in the setting of sheaves of linear categories over the scheme $X$, a notion of independent interest.

Let $\Catperf: CAlg_{\C}\to \widehat{Cat_{\infty}}$ denote the functor sending 
$R \mapsto \text{Cat}^{\text{perf}}(R)$.  We may right-Kan extend this along the Yoneda embedding $\text{Aff}_{\C} \to Shv_{\C}^{\acute{e}t}$ to obtain the following commutative diagram of functors:

\[
\xymatrix{
\text{Aff}_{\C}^{op} \ar@{^{(}->}[d] \ar[r]^{\Catperf(-)  } &
\widehat{Cat_{\infty}}  \\
{Shv^{\acute{e}t}_{\C}}^{op}. \ar[ru]_{ShvCat^{\acute{e}t}(-)} }
\]
Hence, if $X$ is any scheme, or more generally any \'{e}tale sheaf we may informally describe an object $ \mathcal{C} \in \shvcat$ as an assignment, to any $Spec(S) \in \text{Aff}_{ X}$ an $S$-linear category $\mathcal{C}_{S} \in \text{Cat}^{\text{perf}}(S)$ together with equivalences
\[
\mathcal{C}_{S_1} \otimes_{\Perf(S_1)} \Perf(S_2) \simeq \mathcal{C}_{S_2}
\]
for any morphism of schemes $\phi: Spec(S_2) \to Spec(S_1)$ over $X$.  
        
By \cite[Theorem 1.5.2]{gaitsgory}, the functor $ShvCat^{\acute{e}t}(-)$ satisfies \'{e}tale hyperdescent on the category; hence 
\[
ShvCat^{\acute{e}t}(-): {Shv^{\acute{e}t}_{\C}}^{op} \to \widehat{Cat_\infty}
\]
is limit preserving.  

\begin{lem}
$\shvcat$ is a symmetric monoidal category for any $X \in Shv^{\acute{e}t}_{\C}$. 
\end{lem}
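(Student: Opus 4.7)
The plan is to upgrade $\Catperf(-)$ to a functor landing in commutative algebra objects of $\widehat{Cat_{\infty}}$ and then observe that right Kan extension respects this refinement. First, for any connective $\mathbb{E}_{\infty}$-$\C$-algebra $R$, the category $\Catperf(R) = \text{Mod}_{\Perf(R)}(\Catperf)$ carries a natural symmetric monoidal structure given by the relative tensor product $\otimes_{\Perf(R)}$, with unit $\Perf(R)$. This is the standard symmetric monoidal structure on the $\infty$-category of modules over a commutative algebra object of a symmetric monoidal $\infty$-category, applied to $\Perf(R) \in \text{CAlg}(\Catperf)$.

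Next, I would verify that this structure is functorial in $R$: for any morphism $f : R \to S$ of connective $\mathbb{E}_{\infty}$-$\C$-algebras, the base-change functor $- \otimes_{\Perf(R)} \Perf(S) : \Catperf(R) \to \Catperf(S)$ is symmetric monoidal, because $\Perf(R) \to \Perf(S)$ is a symmetric monoidal morphism of commutative algebras in $\Catperf$ and extension of scalars along such a morphism is always symmetric monoidal. This promotes $\Catperf(-)$ to a functor $\Aff^{op} \to \text{CAlg}(\widehat{Cat_{\infty}})$ lifting the one in the diagram above.

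The final step is to invoke the fact that the forgetful functor $\text{CAlg}(\widehat{Cat_{\infty}}) \to \widehat{Cat_{\infty}}$ creates small limits. Right Kan extending the upgraded functor along the Yoneda embedding $\Aff \hookrightarrow Shv^{\acute{e}t}_{\C}$ therefore produces a functor $(Shv^{\acute{e}t}_{\C})^{op} \to \text{CAlg}(\widehat{Cat_{\infty}})$ whose underlying $\widehat{Cat_{\infty}}$-valued functor agrees with $ShvCat^{\acute{e}t}(-)$ as defined in the excerpt. Evaluating at any $X \in Shv^{\acute{e}t}_{\C}$ endows $\shvcat$ with a symmetric monoidal structure, informally realized as a sectionwise relative tensor product: for $\mathcal{C}, \mathcal{D} \in \shvcat$, the tensor $\mathcal{C} \otimes \mathcal{D}$ is given affine-locally by $\mathcal{C}_{S} \otimes_{\Perf(S)} \mathcal{D}_{S}$.

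The main obstacle is the bookkeeping step of rigorously packaging the symmetric monoidal structure on each $\Catperf(R)$ into a single functor into $\text{CAlg}(\widehat{Cat_{\infty}})$ with the base-change functors as symmetric monoidal morphisms; naively specifying coherent data at the level of objects and $1$-morphisms is not enough. In practice this step can be handled by appealing directly to the operadic Kan extension and relative tensor product machinery of \cite{lurie2016} (notably \S 3.2 and \S 4.8), which supplies the required upgrade from the input data we already have.
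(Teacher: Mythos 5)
Your proof is correct and takes essentially the same approach as the paper's one-line argument, just spelled out carefully: you lift $\Catperf(-)$ to a functor into $\mathrm{CAlg}(\widehat{Cat_\infty})$ via the relative tensor product, note that extension of scalars along $\Perf(R) \to \Perf(S)$ is symmetric monoidal, and then use that the forgetful functor $\mathrm{CAlg}(\widehat{Cat_\infty}) \to \widehat{Cat_\infty}$ creates limits so that right Kan extension along the Yoneda embedding preserves the $\mathrm{CAlg}$-structure. This is precisely what the paper's appeal to ``properties of right Kan extension'' and ``$\widehat{Cat_\infty}$ is symmetric monoidal'' is gesturing at.
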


\begin{proof}
This follows from the properties of right Kan extension, together with the fact that $\widehat{Cat_{\infty}}$ is itself symmetric monoidal.
\end{proof}

We now investigate the Picard space of this category. 

\begin{prop} \label{picbr}
There is a natural equivalence of spaces $\emph{Pic}(\shvcat) \simeq \mathbf{Br}(X)$
for $X$ a scheme, quasi-compact and quasi-separated over $\C$.

\end{prop}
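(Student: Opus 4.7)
The plan is to reduce the statement to affines on both sides, using that both $\shvcat$ and $\mathbf{Br}(X)$ are assembled from their values on affines via \'{e}tale hyperdescent, and then to invoke the already-recorded identification $\Pic(\Catperf(S))\simeq\mathbf{Br}(\mathrm{Spec}(S))$ at each affine $\mathrm{Spec}(S)\to X$.

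First, by the hyperdescent statement cited from \cite{gaitsgory} together with the fact that the qcqs scheme $X$ is a colimit of its affine opens in $Shv^{\acute{e}t}_{\C}$, the right Kan extension description gives
\[
\shvcat\;\simeq\;\lim_{\mathrm{Spec}(S)\in(\Aff)_{/X}}\Catperf(S).
\]
Now apply the Picard-space functor. Viewed as a functor from symmetric monoidal $\infty$-categories to $E_{\infty}$-spaces, $\Pic$ sends $\mathcal{C}$ to the maximal subgroupoid of invertible objects, which is cut out by a limit condition from the mapping space out of the unit; equivalently $\Pic$ is a right adjoint. Hence it commutes with limits in its source, and
\[
\Pic(\shvcat)\;\simeq\;\lim_{\mathrm{Spec}(S)\to X}\Pic(\Catperf(S)).
\]

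Second, on affines the remark following the definition of $\mathbf{Br}$ records $\Pic(\Catperf(S))\simeq\mathbf{Br}(\mathrm{Spec}(S))$, functorially in $S$. Moreover $\mathbf{Br}$ is by construction an \'{e}tale hypersheaf, so
\[
\mathbf{Br}(X)\;=\;\Map_{Shv^{\acute{e}t}_{\C}}(X,\mathbf{Br})\;\simeq\;\lim_{\mathrm{Spec}(S)\to X}\mathbf{Br}(\mathrm{Spec}(S)).
\]
Chaining the three equivalences produces the desired comparison, and naturality in $X$ follows because every step is natural in the indexing diagram $(\Aff)_{/X}$.

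The principal obstacle is verifying that the affine identification $\Pic(\Catperf(S))\simeq\mathbf{Br}(\mathrm{Spec}(S))$ is genuinely a natural transformation of presheaves of $E_{\infty}$-spaces on $\Aff_{\C}^{op}$, rather than merely an objectwise equivalence; only then can one legitimately pass to the limit over $(\Aff)_{/X}$ in the last step. Unwinding the construction of $\mathbf{Br}$ as the \'{e}tale sheafification of the image of $\mathbf{Az}\to\mathbf{Pr}$, this reduces to checking that the classifying map sending an invertible $S$-linear category $\mathcal{C}$ to its associated point of $\mathbf{Pr}(\mathrm{Spec}(S))$ is compatible with base change $\mathcal{C}\mapsto\mathcal{C}\otimes_{S}S'$; this is essentially the content of the functoriality of the Brauer stack construction in \cite{antieaugepner}, which one can cite. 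With that in hand, the remaining steps are a formal manipulation of limits.
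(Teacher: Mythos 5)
Your proposal is correct and follows essentially the same route as the paper: both reduce to the affine case via descent, using that $\Pic$ preserves limits (as a right adjoint) and the identification $\Pic(\Catperf(S)) \simeq \mathbf{Br}(\mathrm{Spec}(S))$ over affines. The paper phrases this as constructing a natural map of \'{e}tale sheaves $\Pic(ShvCat^{\acute{e}t}(-)) \to \mathbf{Br}$ and checking it is an equivalence on the conservative family of section functors over affines, which is the same naturality content you flag and then resolve by decomposing both sides as limits over $(\Aff)_{/X}$ and matching terms.
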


\begin{proof}
By \cite[Theorem 7.7]{umkehr}, the Picard space functor $\text{Pic}:  \widehat{Cat_{\infty}} \to \mathcal{S}$ itself preserves limits as it appears as a right adjoint in the adjoint pair  
\[
\text{Pre}: \text{CAlg}^{grp}(\Sa) \rightleftarrows \text{CAlg}(\Prs) : \Pic,
\]
where $Pre(Y) := Fun_{\infty}(Y, \mathcal{S})$.  Hence, the composition, $\text{Pic}(ShvCat^{\acute{e}t}(-))$ preserves limits, and therefore defines an \'{e}tale hypersheaf of spaces.  We display a map 
\begin{equation} \label{sheafmap}
\text{Pic}ShvCat^{\acute{e}t} \to \mathbf{Br}
\end{equation}
of objects in $Shv_{\C}^{\acute{e}t}$ and then argue that it is an equivalence.   There is a natural map of functors 
\begin{equation} \label{sheafmap2}
\Pic(\Catperf(-)) \to \mathbf{Br} \to \mathbf{Pr},
\end{equation}
essentially by definition of the Brauer sheaf in Section 5, together with the fact that an invertible object $\mathcal{C} \in  \Pic(\Catperf(R)) \simeq  \Pic(Cat_{R, \omega})$ corresponds to $Mod_{A}$ where $A$ is a derived Azumaya algebra over $R$.    This gives the desired map.  Note that the symmetric monoidal equivalence
\[
Ind: \Catperf \to \Prs 
\]

\noindent from section \ref{stable}  induces the equivalence $\Catperf_{\infty}(R) \simeq Cat_{R, \omega}$ . Furthermore, recall from section 5, that 
\[
\Pic(Cat_{R, \omega}) \simeq \mathbf{Br}(R)
\]
for every commutative $\C$-algebra $R$.   Let $\{\Gamma_{R}\}$ be the collection of functors $\Gamma_{R}: Shv^{\acute{e}t}_{\C} \to \mathcal{S}$  with $\Gamma_{R}(\mathcal{C}) = (\mathcal{C}_{R})$ for $R$ ranging over all commutative, connective $\C$ algebras.  This defines a conservative family of functors and therefore  detects equivalences in $Shv^{\acute{e}t}_{\C}$.  We conclude that the map in  \ref{sheafmap} is an equivalence of sheaves.  Hence we make the identification 
\[
\Pic(\shvcat) \simeq \mathbf{Br}(X).
\]
\end{proof}

For $X$, quasi-compact, quasi-separated, the $\infty$-category $\shvcat$ admits a perhaps simpler description, as follows.  Let $\text{Perf}(X)$ denote the $\infty$-category of perfect complexes of $\mathcal{O}_{X}$ modules.  This is well known to be a small, stable idempotent complete $\infty$-category.  In the setting of section \ref{linearcat} we set
\[
\Catperf(X) := \text{Mod}_{\text{Perf}(X)}(\Catperf_{\infty})     
\]

For $X$ a general \'{e}tale sheaf, it is not necessarily true that $\Catperf(X) \simeq \shvcat$.  This is true whenever $X$ is $1$-affine in the sense of \cite{gaitsgory}. 

\begin{thm}[Gaitsgory] \label{gaitsgory}
Let $X$ be a scheme.  Then there exists a symmetric monoidal adjunction
\[
\mathbf{Loc}: \Catperf(X) \rightleftarrows \shvcat : \mathbf{\Gamma_{X}}
\]
If $X$ is in particular quasi-compact, quasi-separated, then these are inverse equivalences and $\mathbf{\Gamma_{X}}$ is itself (strictly) symmetric monoidal. 
\end{thm}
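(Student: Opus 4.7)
The plan is to build the adjunction $\mathbf{Loc} \dashv \mathbf{\Gamma_{X}}$ in general, verify the symmetric monoidal structure, and then upgrade it to an equivalence by a descent argument that reduces the qcqs case to the affine case. First, I would define $\mathbf{Loc}$ on objects by
\[
\mathbf{Loc}(\mathcal{C})(\mathrm{Spec}(S) \xrightarrow{f} X) := \mathcal{C} \otimes_{\Perf(X)} \Perf(S),
\]
where $\Perf(S)$ is viewed as a $\Perf(X)$-algebra via pullback. This assembles into an object of $\shvcat$ because $\mathrm{Mod}_{(-)}$ satisfies \'etale hyperdescent \cite{luriedescent}, so base change along the \'etale topology recovers descent automatically. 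The right adjoint $\mathbf{\Gamma_{X}}$ is global sections, $\mathcal{F} \mapsto \lim_{\mathrm{Spec}(S) \to X} \mathcal{F}(S)$, which naturally carries a $\Perf(X) \simeq \lim \Perf(S)$-linear structure. The adjunction is then formal once one checks that $\mathbf{\Gamma_{X}}\mathbf{Loc}(\mathcal{C}) \simeq \mathcal{C} \otimes_{\Perf(X)} \Perf(X) \simeq \mathcal{C}$ whenever the target sheaf is constant on the cover in question.

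For the symmetric monoidality of $\mathbf{Loc}$, I would observe that both source and target carry symmetric monoidal structures in which the tensor products are computed via relative tensor over $\Perf(X)$ (respectively, pointwise by right Kan extension from affines, using that $\widehat{Cat_\infty}$ is symmetric monoidal), and the pointwise base-change formula $\mathcal{C} \otimes_{\Perf(X)} \mathcal{D} \otimes_{\Perf(X)} \Perf(S) \simeq \mathbf{Loc}(\mathcal{C})(S) \otimes_{\Perf(S)} \mathbf{Loc}(\mathcal{D})(S)$ exhibits $\mathbf{Loc}$ as strong symmetric monoidal. On the qcqs side, strong symmetric monoidality of $\mathbf{\Gamma_{X}}$ will follow at the end from the fact that it is a symmetric monoidal equivalence.

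The heart of the proof is the equivalence statement. I would handle the affine case $X = \mathrm{Spec}(R)$ first: here $\Catperf(X) = \mathrm{Mod}_{\Perf(R)}(\Catperf)$ and a sheaf $\mathcal{F} \in \shvcat$ is, by the descent condition, determined by $\mathcal{F}(R)$ together with its compatible base changes. Thus $\mathcal{F} \simeq \mathbf{Loc}(\mathbf{\Gamma_{X}}(\mathcal{F}))$ is the content of \'etale descent for module categories over connective $\mathbb{E}_\infty$-rings, which is available from \cite{luriedescent}. For general quasi-compact, quasi-separated $X$, I would argue that both sides are limit-preserving as functors of $X$ along qcqs descent. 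For $\shvcat$ this is built in via right Kan extension along $\Aff_{\C} \hookrightarrow Shv_{\C}^{\acute{e}t}$; for $\Catperf(X) = \mathrm{Mod}_{\Perf(X)}(\Catperf)$ this uses that $\Perf(-)$ itself satisfies Zariski (and indeed \'etale) descent on qcqs schemes, and that $\mathrm{Mod}_{-}(\Catperf)$ carries colimits of $\mathbb{E}_\infty$-algebras to limits of module $\infty$-categories. Covering $X$ by a finite Zariski affine atlas $U_\bullet \to X$ and applying Čech descent on both sides then reduces the statement to the affine case already handled.

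The main obstacle I anticipate is the descent step for $\Catperf(X)$ as a functor of $X$: it is not a priori clear that $\mathrm{Mod}_{-}(\Catperf)$ commutes with the relevant limits over the Čech nerve of an affine cover, because the functor $A \mapsto \mathrm{Mod}_A$ interacts subtly with limits of $\mathbb{E}_\infty$-algebras. Overcoming this requires invoking rigidity/dualizability of $\Perf(X)$ together with the flat descent theorem for modules, so that the base-change comparison morphism $\lim_\bullet (\mathcal{C} \otimes_{\Perf(X)} \Perf(U_\bullet)) \to \mathcal{C}$ is an equivalence; this is exactly where Gaitsgory's analysis of 1-affineness in \cite{gaitsgory} enters, and absorbing that technical input is the substantive content of the theorem.
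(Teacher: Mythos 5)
The paper does not actually prove this statement; its entire proof is a citation to \cite[Theorem 2.1.1]{gaitsgory}. Your sketch supplies reasonable scaffolding but contains a genuine gap in the affine case. You assert that for $X = \mathrm{Spec}(R)$, the counit equivalence $\mathcal{F} \simeq \mathbf{Loc}(\mathbf{\Gamma_{X}}(\mathcal{F}))$ "is the content of \'etale descent for module categories over connective $\mathbb{E}_\infty$-rings, which is available from \cite{luriedescent}." Those are different statements. \'Etale descent says the presheaf $S \mapsto \Catperf(S)$ is an \'etale hypersheaf, i.e.\ that $\Catperf(S) \to \lim \Catperf(S^\bullet)$ is an equivalence along an \'etale cover. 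What you actually need is that \emph{every} sheaf $\mathcal{F}$ of $\Catperf$-objects on $\mathrm{Spec}(R)_{\acute{e}t}$ is already constant, in the sense that the base-change map $\mathcal{F}(R) \otimes_{\Perf(R)} \Perf(S) \to \mathcal{F}(S)$ is an equivalence for all \'etale $R$-algebras $S$. That is precisely the $1$-affineness of affine schemes, which is the nontrivial core of Gaitsgory's theorem (proved by a separate argument using rigidity and compact generation), not a formal consequence of descent. Writing "a sheaf $\mathcal{F}$ is, by the descent condition, determined by $\mathcal{F}(R)$ together with its compatible base changes" either presupposes the conclusion or confuses being determined by values on a cover with being determined by global sections.

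The qcqs-to-affine reduction has the parallel problem: showing that $\Catperf(-) = \mathrm{Mod}_{\Perf(-)}(\Catperf)$ satisfies Zariski descent in the scheme variable is not a formal manipulation, since $\mathrm{Mod}_{(-)}(\Catperf)$ does not commute with limits of $\mathbb{E}_\infty$-algebras in general. You correctly flag this as the place where Gaitsgory enters. But taken together, both substantive ingredients of your sketch (the affine case and the Zariski gluing for $\Catperf(-)$) are exactly the content of \cite[Theorem 2.1.1]{gaitsgory}, so the argument does not advance beyond the paper's one-line citation, and the affine step is mislabeled as a descent input when it is in fact the heart of the theorem.
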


\begin{proof}
This is \cite[Theorem 2.1.1]{gaitsgory}
\end{proof}
\begin{cor}
Let $X$ be a quasi-compact, quasi-separated $\C$-scheme. There is an equivalence of spaces
\[
\Pic(\emph{Cat}^{\emph{perf}}(X)) \simeq \mathbf{Br}(X).
\]
\end{cor}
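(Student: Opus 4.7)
The plan is to deduce this corollary directly from the two results immediately preceding it, with essentially no new input required. By Theorem \ref{gaitsgory}, since $X$ is quasi-compact and quasi-separated, the adjunction
\[
\mathbf{Loc}: \Catperf(X) \rightleftarrows \shvcat : \mathbf{\Gamma_{X}}
\]
is an inverse pair of equivalences, and moreover $\mathbf{\Gamma_X}$ is strictly symmetric monoidal. Hence $\Catperf(X)$ and $\shvcat$ are equivalent as objects of $\mathrm{CAlg}(\widehat{Cat_{\infty}})$.

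The next step is to observe that the Picard space construction is functorial for symmetric monoidal equivalences: as recalled in the proof of Proposition \ref{picbr}, $\Pic$ arises as the right adjoint in the adjunction $\mathrm{Pre} \dashv \Pic$, and in particular it takes symmetric monoidal equivalences to equivalences of spaces (the invertibility condition on an object is preserved by any symmetric monoidal equivalence). Applying this to the equivalence $\mathbf{\Gamma_X}$ yields
\[
\Pic(\Catperf(X)) \simeq \Pic(\shvcat).
\]

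Finally, I would chain this with Proposition \ref{picbr}, which identifies $\Pic(\shvcat) \simeq \mathbf{Br}(X)$ for qcqs $X$, producing the desired equivalence
\[
\Pic(\Catperf(X)) \simeq \Pic(\shvcat) \simeq \mathbf{Br}(X).
\]
There is no real obstacle to overcome here; all of the substantive work has already been done in establishing Proposition \ref{picbr} and in invoking Gaitsgory's $1$-affineness result for qcqs schemes. The corollary is a formal consequence of the compatibility of $\Pic$ with symmetric monoidal equivalences.
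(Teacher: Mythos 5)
Your proof is correct and follows essentially the same route as the paper: apply Theorem \ref{gaitsgory} to identify $\Catperf(X)$ with $\shvcat$ as symmetric monoidal $\infty$-categories, note that $\Pic$ takes this equivalence to an equivalence of spaces, and then invoke Proposition \ref{picbr}. Your elaboration on why $\Pic$ preserves symmetric monoidal equivalences is a minor unpacking of a step the paper treats as immediate.
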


\begin{proof}
By theorem \ref{gaitsgory}, $\Pic(\Catperf(X)) \simeq \Pic(\shvcat)$.  By proposition \ref{picbr}, 
\[
\Pic(\shvcat) \simeq \mathbf{Br}(X).
\]
The equivalence follows.   \end{proof}

\subsection{\'{E}tale sheafified $K$-theory} \label{yo}

Fix $X$ a quasi-compact, quasi-separated scheme over $Spec(\C)$.  Let $\text{Sm}_{X}$ denote the (nerve of) the category of schemes smooth and finite type over $X$, equipped with the \'{e}tale topology. We remind the reader that the classical \'{e}tale topology, which we now refer to agrees with the \'{e}tale topology  used in derived algebraic geometry (which we use in Section \ref{tikka}) in the sense that for any $E_{\infty}$ ring $A$, the category Calg$^{\acute{e}t}_{/A}$ spanned by \'{e}tale morphisms is equivalent to the nerve of the (ordinary) category of $\pi_{0}A$-algebras. This is \cite[Theorem 7.5.0.6] {lurie2016}  
Our definition of relative topological $K$-theory will factor through the $\infty$-category of \'{e}tale sheaves of spectra on this category which we now describe.    
\begin{defn}
Let $\text{Pre}(\Sm_{X}) := \text{Fun}( \text{Sm}_{X}^{op}, \mathcal{S}) $ denote the $\infty$-topos of presheaves of spaces on $X$.  We define $Shv^{\acute{e}t}(X) \subset \text{Pre}(\text{Sm}_{X}) $  to be the full subcategory  of hypersheaves with respect to the \'{e}tale topology.  \end{defn}

\begin{defn}
Using the notions from section \ref{sheaf} we may now define $Shv^{\acute{e}t}_{Sp}(X)$, the stable $\infty$-category of \'{e}tale hypersheaves of spectra on $\text{Sm}_{/X}$ to be
\[
Shv_{Sp}^{\acute{e}t}(X) := \text{Fun}^{lim}(Shv^{\acute{e}t}(X)^{op}, Sp) \simeq
Shv^{\acute{e}t}(X) \otimes^{L} Sp
\]
\end{defn}

The forgetful functor $Shv_{Sp}^{\acute{e}t}(X) \subset \text{Pre}_{Sp}(\text{Sm}_{/X}) $ admits a (symmetric monoidal) left adjoint 
\[
L_{\acute{e}t}: \text{Pre}_{Sp}(\text{Sm}_{X}) \to Shv_{Sp}^{\acute{e}t}(X)
\]

\noindent (cf. section \ref{sheaf}) which we think of as \emph{\'{e}tale sheafification}.  
Let 
 \[
\underline{K_{X}(-)}: \Catperf(X) \to \Pre_{Sp}(\text{Sm}_{X})
\]
be the functor assigning to $\text{Perf}(X)$-linear category $T \in \Catperf(X)$, the presheaf of spectra
\[
\underline{K_{X}(T)}: \text{Sm}_{X}^{op} \to Sp
\]
sending a smooth scheme $Y$ to the algebraic $K$-theory spectrum $K(\text{Perf}(Y) \otimes_{\Perf(X)} T)$.

It is not typically the case that $\underline{K_{X}(T)}$ is an \'{e}tale sheaf, for any $T \in \Catperf(X)$.  Of course, this is because $K$-theory is known to not satisfy descent with respect the \'{e}tale topology (see for example \cite[Section 1.1]{clausen}).

We can however, apply the aforementioned sheafification functor.  We compose this with the algebraic $K$-theory functor $\underline{K_{X}(-)}: \Catperf(X) \to \text{Pre}_{Sp}(\text{Sm}_{/X})$ to obtain a well defined functor which we denote by 
\[
\underline{K^{\acute{e}t}_{X}} : \Catperf(X) \to Shv_{Sp}^{\acute{e}t}(X). 
\]
Work of \cite{blumberg2} displays algebraic K-theory as a lax symmetric monoidal functor on stable $\infty$-categories; this in turn displays the functor $\underline{K_{X}(-)}:  \Catperf(X)  \to  \text{Pre}_{Sp}(\text{Sm}_{/X})$ as lax symmetric monoidal, with respect to the pointwise monoidal structure on $\text{Pre}_{Sp}(\Sm_{/X})$.  Since sheafification is itself (strongly) symmetric monoidal, the composition $\underline{K^{\acute{e}t}_{X}}$ will itself be lax symmetric monoidal.  In particular, this functor canonically factors through the forgetful functor 
\[
Mod_{{K^{\acute{e}t}_{X}}}\left(Shv_{Sp}^{\acute{e}t}(X)\right) \rightarrow Shv_{Sp}^{\acute{e}t}(X).
\]
Here we make the identification $K^{\acute{e}t}_{X}:= \underline{K^{\acute{e}t}_{X}(\mathbb{1})}$.  

Since the functor $\underline{K^{\acute{e}t}_{X}}$ is only lax symmetric monoidal, it is not immediately clear that invertible objects in $\Catperf(X)$ are sent to invertible objects in  $Mod_{K^{\acute{e}t}_{X}}\left(Shv_{Sp}^{\acute{e}t}(X)\right)$.  This is something we will need to know later.  In order to prove this, we will use the \'{e}tale local triviality result discussed in section.

\begin{prop} \label{nenu} 
The functor $\underline{K^{\acute{e}t}_{X}}: \Catperf(X) \to Mod_{K^{\acute{e}t}_{X}}\left(Shv_{Sp}^{\acute{e}t}(X)\right)$ sends invertible objects in $\Catperf(X)$  to invertible $K^{\acute{e}t}_{X}$ modules.  In particular, $\underline{K^{\acute{e}{t}}_{X}}$ induces a map of Picard spaces: 
\[
\emph{Pic}(\emph{Cat}^{\emph{perf}}(X)) \to \emph{Pic}(Mod_{K^{\acute{e}t}_{X}}\left(Shv_{Sp}^{\acute{e}t}(X)\right)).
\]

\end{prop}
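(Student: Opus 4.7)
The plan is to exploit the étale local triviality of derived Azumaya algebras (Theorem 5.3 above) together with the fact that algebraic $K$-theory is Morita invariant, and then use that invertibility of a module over a sheaf of $\mathbb{E}_\infty$-rings can be detected on an étale cover.

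First, I would pick an invertible $\mathcal{C} \in \Catperf(X)$. By the corollary preceding this proposition, such a $\mathcal{C}$ corresponds to a class $\alpha \in \pi_0\mathbf{Br}(X)$, i.e., to a derived Azumaya algebra $A$ over $X$ with $\mathcal{C} \simeq \text{Perf}(X, \alpha) \simeq \text{Mod}^{\omega}_{A}$. Applying Theorem \ref{localtriviality} sectionwise (or rather using the sheaf-theoretic formulation that $\mathbf{Br}$ is a connected sheaf with $\Omega \mathbf{Br} \simeq \mathbf{Pic}$), I can choose a faithfully flat étale cover $\{U_i \to X\}$ on which $A|_{U_i}$ is Morita equivalent to $\mathcal{O}_{U_i}$, so that $\mathcal{C}|_{U_i} \simeq \text{Perf}(U_i)$ as a $\text{Perf}(U_i)$-linear category. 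Since algebraic $K$-theory is Morita invariant, this gives a levelwise equivalence $\underline{K(\mathcal{C})}|_{\text{Sm}_{/U_i}} \simeq \underline{K(\text{Perf}(U_i))}$ of presheaves of spectra, which survives étale sheafification to yield $\underline{K^{\acute{e}t}_X(\mathcal{C})}|_{U_i} \simeq K^{\acute{e}t}_{U_i}$ as objects of $\text{Shv}^{\acute{e}t}_{Sp}(U_i)$.

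Next, to actually produce an inverse and show invertibility as a $K^{\acute{e}t}_X$-module, I would use the lax symmetric monoidal structure on $\underline{K^{\acute{e}t}_X}$: the candidate inverse is $N := \underline{K^{\acute{e}t}_X(\mathcal{C}^{\otimes -1})}$, and the lax structure furnishes a canonical map
\[
\mu: \underline{K^{\acute{e}t}_X(\mathcal{C})} \otimes_{K^{\acute{e}t}_X} \underline{K^{\acute{e}t}_X(\mathcal{C}^{\otimes -1})} \;\longrightarrow\; \underline{K^{\acute{e}t}_X(\mathcal{C} \otimes \mathcal{C}^{\otimes -1})} \simeq \underline{K^{\acute{e}t}_X(\mathbb{1})} = K^{\acute{e}t}_X
\]
in $\text{Mod}_{K^{\acute{e}t}_X}(\text{Shv}^{\acute{e}t}_{Sp}(X))$. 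On the cover $\{U_i\}$ trivializing $\mathcal{C}$, the previous paragraph identifies both factors with $K^{\acute{e}t}_{U_i}$ and identifies $\mu|_{U_i}$ with the standard equivalence $K^{\acute{e}t}_{U_i} \otimes_{K^{\acute{e}t}_{U_i}} K^{\acute{e}t}_{U_i} \xrightarrow{\sim} K^{\acute{e}t}_{U_i}$ (after tracing through what the lax structure and the unit map do on the trivialized category; in the trivial case nothing nontrivial happens). Since $\{U_i \to X\}$ is a cover in the site defining $\text{Shv}^{\acute{e}t}_{Sp}(X)$, equivalences in this category are detected by restriction to the $U_i$, and hence $\mu$ is a global equivalence, exhibiting $\underline{K^{\acute{e}t}_X(\mathcal{C})}$ as invertible. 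The induced map of Picard spaces is then immediate by functoriality.

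The main obstacle I anticipate is the bookkeeping in the middle step: verifying that the lax monoidal structure map $\mu$ really does restrict on the trivializing cover to the obvious unit equivalence, rather than some potentially nontrivial self-map of $K^{\acute{e}t}_{U_i}$. This amounts to checking that the lax monoidal structure of algebraic $K$-theory on the unit $\text{Perf}(-)$ is strict — which it is, essentially by construction of the monoidal structure on $K$-theory of \cite{blumberg2} — and that étale sheafification, being symmetric monoidal, preserves this strictness. A second, lighter subtlety is confirming that detecting equivalences on an étale cover is legitimate in $\text{Shv}^{\acute{e}t}_{Sp}(X)$; this follows from hyperdescent, since the family of restriction functors $\{(-)|_{U_i}\}$ is jointly conservative on a hyperstack.
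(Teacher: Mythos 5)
Your proof takes essentially the same route as the paper: produce the lax-monoidal structure map $\mu$ from $\underline{K^{\acute{e}t}_X(\mathcal{C})}\otimes_{K^{\acute{e}t}_X}\underline{K^{\acute{e}t}_X(\mathcal{C}^{-1})}$ to $K^{\acute{e}t}_X$, use the étale local triviality of derived Azumaya algebras (Theorem \ref{localtriviality}) to trivialize, and conclude by joint conservativity of a family of test functors. The only cosmetic difference is that you restrict to an explicit trivializing étale cover $\{U_i\}$, whereas the paper's proof passes to stalks at geometric points (hensel local rings) and uses that the stalk functors $\{\phi_x^*\}$ are jointly conservative and symmetric monoidal; these are interchangeable in this setting. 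The subtlety you flag — that $\mu$ must restrict to the actual unit equivalence rather than merely an abstract isomorphism of source and target — is a real point, and the paper glosses it in exactly the way you indicate: after trivializing, the lax structure map collapses to the unit map of $K^{\acute{e}t}$ on the trivialized category, which is an equivalence by construction.
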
 

\begin{proof}

Fix $A$, an invertible object of  $\Catperf(X)$ which by proposition \ref{picbr} represents a corresponding element of the derived Brauer group $\pi_{0}\mathbf{Br}(X)$; we denote its inverse with respect to the monoidal structure by $A^{-1}$.   Next, we apply $K^{\acute{e}t}_{X}$ ; we denote the corresponding sheaves of spectra $\underline{K^{\acute{e}t}_{X}(A)}$ and $\underline{K^{\acute{e}t}_{X}(A \otimes_{\mathcal{O}_{X}} A^{-1})} \simeq \underline{K^{\acute{e}t}(\mathbb{1})}$.  Since the functor is lax symmetric monoidal, we have the following map of sheaves of spectra:
\[
\pi:  \underline{K^{\acute{e}t}(A)} \wedge_{K^{\acute{e}t}_{X}} \underline{K^{\acute{e}t}(A^{-1})} \to \underline{K^{\acute{e}t}(A \otimes_{\mathcal{O}_{X}} A^{-1})}.
\]
By Proposition \ref{localtriviality} above, Azumaya algebras are \'{e}tale locally trivial.  Hence, the \'{e}tale stalks of the corresponding sheaf of dg-categories are Morita equivalent to the base, and their corresponding \'{e}tale $K$-theory is equivalent to the \'{e}tale $K$-theory of the local ring of the stalk (which is a strictly Henselian local ring).  The functor 
\[
\phi_{x}^{*}: Shv_{Sp}^{\acute{e}t}(X) \to Sp
\]
of passing to stalks will be symmetric monoidal, as it is the left adjoint of (the stabilization of) a geometric morphism.  Hence, the stalk of $\underline{K^{\acute{e}t}(A)} \wedge_{K^{\et}_{X}} \underline{K^{\acute{e}t}(A^{-1})}$ is equivalent to the stalk of $K^{\acute{e}t}_{X}(\mathbb{1})$, namely the \'{e}tale $K$-theory of the strictly Henselian local ring of the scheme at that point.  Since the collection of stalk functors $\{\phi_{x}^{*}\}_{x \in X}$ forms a jointly conservative family of functors, this is enough to conclude that the map $\pi$ above is an equivalence.   

We have shown that $\underline{K^{\acute{e}t}_{X}(A)} \wedge \underline{K^{\acute{e}t}_{X}(A^{-1})} \simeq K^{\acute{e}t}_{X}$, the unit of the symmetric monoidal structure on $Mod_{K^{\acute{e}t}_{X}}(\text{Shv}_{Sp}^{\acute{e}t}(X))$, thereby displaying $\underline{K^{\acute{e}t}_{X}(A)}$ as an invertible object in this category.   
\end{proof}

\section{Relative Topological K-theory}
In this section we introduce our definition of relative topological $K$-theory.  We will need to use a version of topological realization defined over an arbitrary quasi-compact, quasi-separated $\C$-scheme, which we now recall.  

\subsection{Topological Realization over a varying base scheme}
Most of the following section is due to Ayoub.  The reader is instructed to check \cite{ayoub} for proofs of the statements.

Let $Y$ be a complex analytic space.  Let AnSm/$_{Y}$ denote the category of smooth complex analytic spaces  over $Y$.  We give this category a Grothendieck topology; given $Z \in $ AnSm/$_{Y}$, we let a covering of $Z$  be given by a collection of open sets $\{U_{i}\}$ that cover it in the usual topology.  These open sets will themselves be smooth analytic spaces over $Y$, as the composition of an open immersion with the structure map $f: Z \to Y$ will be smooth.  This gives the structure of a Grothendieck site on AnSm$_{Y}$.  Let $Shv_{Sp}(\text{AnSm}_{Y})$ denote the $\infty$-category of sheaves of spectra on this site, with respect to this topology.

 Let $Op(Y)$ denote the category of open subsets of $Y$.  Let $ \text{Shv}_{Sp}(Y)$ denote the $\infty$-category of sheaves of spectra on $Y$, which is obtained as the stabilization of the $\infty$ topos of sheaves of spaces over $Y$.  There is an evident inclusion  of 1-categories $\iota: Op(Y) \subset$ \text{AnSm}$_{Y}$.  Indeed, given an open subspace of $Y$, it naturally has the structure of an analytic space over $Y$ as any open subset can be endowed canonically with the structure of an analytic space, smooth over the base space.  The map of sites $Op(Y) \subset \text{AnSm}_{Y}$ induces the following adjunction between sheaf categories 
\[
\iota^{*}  : Shv_{Sp}(Y) \rightleftarrows    Shv(\text{AnSm}_{Y}) : \iota_{*}.
\]

Let  $\mathbb{D}^1 := \{z \in \C : |z| \leq 1\}$, $\mathbb{D}^{n} = (\mathbb{D}^{1})^{n}$ and let ${\mathbb{D}_{Z}^{n}} := \mathbb{D}^{n} \times Z$ for any analytic space $Z$ over $Y$.  This is an complex analytic space over $Y$.  We let Shv$^{\mathbb{D}^{1}}_{Sp}$(\text{AnSm}${Y})$ be the full subcategory of sheaves $F$ with the property that the pullback map $F(Z) \to F(\mathbb{D}^{n}_{Z})$ is an equivalence  for all smooth analytic spaces $Z$  over $Y$. Ayoub proves for any sheaf of spectra $F$, that $\iota^{*}(F) \in$ Shv$_{Sp}^{\mathbb{D}^{1}}$(\text{AnSm}${Y})$.  In fact, he proves the following:

\begin{thm}[Ayoub]\label{ayoub}
The adjunction $\iota^{*}  : Shv_{Sp}(Y) \rightleftarrows    Shv_{Sp}(\text{AnSm}_{Y}) : \iota_{*}$ induces an equivalence $Shv_{Sp}(Y) \simeq  Shv_{Sp}^{\mathbb{D}^{1}}(\text{AnSm}_{Y})$.

\end{thm}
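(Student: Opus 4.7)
My plan is to establish the adjunction $\iota^* \dashv \iota_*$ as an equivalence between $\text{Shv}_{Sp}(Y)$ and $\text{Shv}^{\mathbb{D}^1}_{Sp}(\text{AnSm}/Y)$ by verifying that its unit and counit are isomorphisms on the respective subcategories. Recall that $\iota^*$ is sheafified left Kan extension along the fully faithful inclusion of sites $\iota: Op(Y) \hookrightarrow \text{AnSm}/Y$, while $\iota_*$ is restriction. Three checks are needed: (a) $\iota^*F$ is $\mathbb{D}^1$-invariant for every sheaf $F$ on $Y$; (b) the unit $F \to \iota_*\iota^* F$ is an equivalence; (c) the counit $\iota^*\iota_* G \to G$ is an equivalence for every $G \in \text{Shv}^{\mathbb{D}^1}_{Sp}(\text{AnSm}/Y)$.

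Points (a) and (b) are essentially formal. The unsheafified left Kan extension computes $(\text{Lan}_\iota F)(Z)$ as a filtered colimit of $F(U)$ over those opens $U \subseteq Y$ through which the structure map $Z \to Y$ factors. For $U \in Op(Y)$ this diagram has $U$ itself as an initial object, so restriction along $\iota$ of $\text{Lan}_\iota F$ already recovers $F$, and sheafification does not disturb these values since $F$ is already an open-cover sheaf; this gives (b). For (a), the maps $Z \to Y$ and $\mathbb{D}^n \times Z \to Y$ factor through exactly the same opens of $Y$, so the two colimits defining $(\text{Lan}_\iota F)(Z)$ and $(\text{Lan}_\iota F)(\mathbb{D}^n \times Z)$ are canonically identified, and this identification persists after sheafification.

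The substance lies in (c). I would invoke the local structure theorem for smooth morphisms of complex-analytic spaces: every smooth $Z \to Y$ is, locally on the source, isomorphic over $Y$ to a projection $U \times \mathbb{D}^n \to U \hookrightarrow Y$ for some open $U \subseteq Y$. Pick an open cover $\{Z_i \cong U_i \times \mathbb{D}^{n_i}\}$ of $Z$ and recursively cover higher intersections by further such local models. Both $G$ and $\iota^*\iota_* G$ are open-cover sheaves satisfying $\mathbb{D}^1$-invariance (the latter by (a) applied to $\iota_* G$), so each is computed as the limit over the resulting \v{C}ech-type diagram. On each piece, $\mathbb{D}^1$-invariance yields $G(U_i \times \mathbb{D}^{n_i}) \simeq G(U_i) \simeq (\iota^*\iota_* G)(U_i) \simeq (\iota^*\iota_* G)(U_i \times \mathbb{D}^{n_i})$, where the middle equivalence uses (b) applied to $\iota_* G$. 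Gluing via the descent limit yields the counit equivalence.

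The principal obstacle I anticipate is the recursive nature of this descent: intersections of local-model pieces need not themselves be polydisk bundles and must be further covered, so one must check that the iterated \v{C}ech nerves converge and that the pointwise equivalences assemble coherently in the stable $\infty$-categorical setting. Concretely, the hard work is showing that the combined limit computing $G(Z)$ really reduces to a limit of values of $G$ on opens of $Y$ in a manner compatible with both the counit map and with sheafification; this is the heart of Ayoub's technical argument.
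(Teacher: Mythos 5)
The paper does not actually prove this theorem; it cites Ayoub (\cite[Th\'eor\`eme 1.18]{ayoub}) and only offers a one-sentence heuristic, namely that $Shv_{Sp}(\text{AnSm}/_Y)$ is generated by objects $\mathcal{Y}(\mathbb{D}^n_U)\otimes A_{cst}$ and that after $\mathbb{D}^1$-localization these generators reduce to $\{\mathcal{Y}(U)\otimes A_{cst}\}$, recovering $Shv_{Sp}(Y)$. Your route is genuinely different in packaging: rather than identifying generators and matching them across the equivalence, you verify the unit and counit directly. Both ultimately rest on the same geometric input (the local structure theorem for smooth analytic maps), but the generators argument avoids the recursive \v{C}ech issue you flag in (c), since one only needs to know that the representables $\mathcal{Y}(\mathbb{D}^n_U)$ generate under colimits, which is an essentially-surjectivity statement that survives the infinite regress more gracefully than a term-by-term counit comparison.

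Beyond the gap you yourself acknowledge in (c), there is a second, unacknowledged gap in (a). You compute that the unsheafified left Kan extension sends $Z \mapsto F(\mathrm{im}(Z))$, which is indeed $\mathbb{D}^1$-invariant at the presheaf level, and then assert ``this identification persists after sheafification.'' That step does not follow formally: $\mathbb{D}^1$-invariance is not a local condition and sheafification does not in general preserve it. The issue is already visible for $Y=\mathrm{pt}$ and $F=A$ constant: the unsheafified Kan extension is the constant presheaf, but its sheafification is $Z\mapsto A^{\mathrm{Sing}(Z)}$, and its $\mathbb{D}^1$-invariance is a nontrivial fact equivalent to the contractibility of $\mathbb{D}^n$. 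In general one needs to identify $\iota^*F$ with $Z \mapsto \Gamma(Z, f^*F)$ (for $f\colon Z \to Y$ the structure map) and then invoke homotopy invariance of sheaf cohomology along the projection $\mathbb{D}^n_Z \to Z$, which again uses that the fibers are contractible. This is where a genuine analytic input enters that your formal comma-category computation does not capture. So your outline is sound in spirit and captures the correct geometric content, but both (a) and (c) require additional arguments that are the actual substance of Ayoub's theorem.
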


\begin{proof}
This is found in \cite[Theoreme 1.18]{ayoub}. 
\end{proof}

Heuristically speaking, this is because the category $Shv_{Sp}(\text{AnSm}_{/Y})$ is  generated by objects of the form $\{ \mathcal{Y}(\mathbb{D}^{n}_{U}) \otimes A_{cst} \}_{n \in \mathbb{N}, U \in Op(Y)}$ where  $\mathcal{Y}(\mathbb{D}^{n}_{U})$ is the representable functor associated to $U$ and $A_{cst}$ is the constant sheaf associated to $A \in Sp$.  Passing to $\mathbb{D}^{1}$-invariant sheaves reduces the generators to those of the form $\{ U \otimes A_{cst}\}$.

Now, fix a scheme $ \phi: X\to $Spec$(\C)$. We define our realization functor on  $Shv_{Sp}^{\acute{e}t}(X)$ landing in $Shv_{Sp}(X(\C))$.  For this we must first pass to $\mathbb{A}^{1}$-invariant sheaves of spectra, which we now recall.    

\begin{defn}
Let $\mathbb{A}^{1}_{X} = \mathbb{A}^{1} \times_{Spec(\C)} X$ denote the relative affine line.  We define $Shv_{Sp}^{\acute{e}t, \mathbb{A}^1}(X)$ to be the full subcategory consisting of all  $\mathcal{F} \in Shv_{Sp}^{\acute{e}t}(X)$ such that $\mathcal{F}(Y) \to \mathcal{F}(Y \times \mathbb{A}^{1})$ is an equivalence for all $Y \in Sm_{/X}$.  As described in say \cite{dugger} (or \cite[Theorem 5.5.1.1]{lurie2009} in the setting of $\infty$-categories),  there exists a left adjoint 
\[
L_{\mathbb{A}^1}:  Shv_{Sp}^{\acute{e}t}(X) \to Shv_{Sp}^{\acute{e}t, \mathbb{A}^1}(X)
\]
to the inclusion exhibiting this as a localization.   

\end{defn}

Recall from section 4, the functor $(-)(\C): \text{Sch}/_{Spec(\C)} \to Top$ defined by 
\[
X \mapsto X(\C)
\]
where $X(\C)$ inherits the structure of a complex analytic space.  For every scheme $X$, we obtain a morphism of sites, 
\[
\text{Sm}_{X} \to \text{AnSm}_{X(\C)}
\]
which induces the adjunction 
\[
An_{X}^{*} :  Shv_{Sp}^{\acute{e}t}(X) \rightleftarrows  Shv_{Sp}(\text{AnSm}_{{X(\C)}}): An_{X,*}.
\]
If $X = Spec(\C)$, this corresponds to the functor $|| - ||_{\mathbb{S}}$ defined by Blanc. Indeed, by Theorem 3.18 of \cite{blanc}, the topological realization functor $|| - ||_{\mathbb{S}} $ is invariant upon restriction to $\Sm_{\C}$, the category of smooth schemes. Furthermore, by \cite[Theorem 3.4]{blanc}, the realization functor $||-||_{\mathbb{S}}$ factors through \'{e}tale (hyper)sheaves, and $An_{\C}^{*}$ evidently agrees with this induced functor. 

\begin{rem}

We remark that $An^{*}_{X}(\mathbb{A}_{X}^1) \simeq \mathbb{D}_{X}^1$.  Hence  $An^{*}_{X} $ sends $\mathbb{A}^{1}$-invariant sheaves to $\mathbb{D}_{X}^1$ invariant sheaves of spectra on $Shv_{Sp}(\text{AnSm}_{X(\C)})$ and therefore further descends to a functor 
\[
Shv_{Sp}^{\acute{e}t, \mathbb{A}^1}(X) \to Shv_{Sp}^{\mathbb{D}^1}(\text{AnSm}_{X(\C)}) \simeq Shv_{Sp}(X(\C)).
\]    

\end{rem}
For the remainder of this paper we will take $\widetilde{An_{X}^{*}}$ to denote the composition 
\[
Shv^{\acute{e}t}_{Sp}(X) \xrightarrow{L_{\mathbb{A}^1}} Shv_{Sp}^{\acute{e}t, \mathbb{A}^1}(X) \to Shv^{\mathbb{D}^1}_{Sp}(\text{An-Sm}/(X(\C)))    \simeq Shv_{Sp}((X(\C)) . 
\]

We will need the following fact:

\begin{lem} \label{commutinglocalizations}
There is a commutative diagram of functors 
\[
\xymatrix{
Pre_{Sp}(X)   \ar[d]_{L_{\acute{e}t}} \ar[r]^{L_{\mathbb{A}^1}} & Pre^{\mathbb{A}^1}_{Sp}(X) \ar[d]^{L_{\acute{e}t}} 
\\
Shv_{Sp}^{\acute{e}t}(X) \ar[r]_{L_{\mathbb{A}^1}} & Shv^{\acute{e}t, \mathbb{A}^1}_{Sp}(X) \\
}
\]

\end{lem}

\begin{proof}
We remind the reader that $L_{\et}$  and $L_{\Aa}$ have fully faithful right adjoints $F_{\et}$ and $L_{\Aa}$ respectively. Note that 
$F_{\et}$ preserves $\Aa$-local objects; if $\mathcal{F}$ is a $\Aa$-local,  $F_{\et}\mathcal{F}(Y) = \mathcal{F}(Y) \simeq  \mathcal{F}(Y \times \Aa) \simeq F_{\et} \mathcal{F}(Y \times \Aa)$ for any $Y \in \text{Sm}_{/X}$. From this it also follows that $L_{\et}$ preserves $\Aa$-equivalences. Indeed, if $f: \mathcal{F} \to \mathcal{G}$ was an $\Aa$-equivalence, then for any $\Aa$-local object $\mathcal{H}$,
\[
\Map(L_{\et}\mathcal{G}, \mathcal{H}) \simeq \Map(\mathcal{G}, F_{\et}\mathcal{H}) \to \Map(\mathcal{F}, F_{\et}\mathcal{H}) \simeq \Map(L_{\et}\mathcal{F}, \mathcal{H})
\]
The diagram of right adjoints commutes from the fact that $F_{\et}$ preserves $\A^1$-local objects, by the uniqueness of adjoints we conclude that the diagram in question commutes as well. 
\end{proof}

\begin{thm} \label{bae}

$\widetilde{An^{*}}( K^{\acute{e}t}_{X})$  is equivalent to the connective topological $K$-theory sheaf $\underline{ku^{X(\C)}}$ on $X(\C)$, sending an open subspace $V \subset X$ to $F(\Sigma_{+}^{\infty}V, ku)$, the complex topological K-theory spectrum of $V$.  

\end{thm}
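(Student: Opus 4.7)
The plan is to construct a natural comparison map $\eta \colon \widetilde{An^{*}}(K^{\acute{e}t}_{X}) \to \underline{ku^{X(\C)}}$ and then verify it is an equivalence stalkwise on $X(\C)$. For every $Y \in \text{Sm}_{/X}$, sending an algebraic vector bundle on $Y$ to its underlying topological complex vector bundle on $Y(\C)$ produces a natural morphism of connective spectra $K(Y) \to F(\Sigma^{\infty}_{+} Y(\C), ku)$. This assembles into a morphism of presheaves of spectra on $\text{Sm}_{/X}$ from $\underline{K}$ to the presheaf $Y \mapsto ku(Y(\C))$. The latter is $\mathbb{A}^{1}$-invariant, since $\mathbb{A}^{1}(\C)=\C$ is contractible and $ku$-cohomology is homotopy invariant, and it is an \'{e}tale hypersheaf, since an \'{e}tale morphism analytifies to a local biholomorphism and $ku$-cohomology satisfies descent for such coverings. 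Hence the map factors through $L_{\mathbb{A}^{1}} K^{\acute{e}t}_{X}$; the $(An_{X}^{*}, An_{X,*})$ adjunction of Section~7, together with Ayoub's equivalence (Theorem~\ref{ayoub}), then supplies $\eta$ as a morphism in $Shv_{Sp}(X(\C))$.

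To check $\eta$ is an equivalence, I would work stalkwise at each $x \in X(\C)$. The stalk of $\underline{ku^{X(\C)}}$ at $x$ is $ku$, since arbitrarily small analytic neighborhoods of $x$ are contractible. For the left-hand side, as $\widetilde{An^{*}}$ is the composition of the left adjoints $L_{\mathbb{A}^{1}}$ and $An_{X}^{*}$, stalks may be computed as
\[
(\widetilde{An^{*}}(K^{\acute{e}t}_{X}))_{x} \simeq \mathrm{colim}_{U}\, K^{\acute{e}t}_{X}(U),
\]
where $U$ runs over a cofinal system of smooth affine \'{e}tale neighborhoods of a geometric point above $x$, chosen so that the analytifications $U(\C)$ form a shrinking system of contractible neighborhoods of $x$. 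For each such $U$, Blanc's fundamental theorem of topological $K$-theory supplies $K^{top}(\Perf(U)) \simeq KU(U(\C))$; after passing to connective covers and taking the colimit over $U$, the right-hand sides collapse to $ku$, identifying $\eta_{x}$ with the identity.

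The principal technical obstacle will be to control the interchange between the \'{e}tale sheafification $L_{\acute{e}t}$, the $\mathbb{A}^{1}$-localization, and the analytic realization $An_{X}^{*}$ in the stalkwise computation, and to deduce the connective comparison from the periodic version of Blanc's theorem via the colimit over shrinking neighborhoods. The enabling fact is that, on smooth $\C$-varieties, \'{e}tale sheafification collapses the discrepancy between algebraic and semi-topological $K$-theory---a rigidity phenomenon traceable to Gabber and Suslin---which is precisely what forces the stalkwise comparison to hold integrally, not merely with finite coefficients.
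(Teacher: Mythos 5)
Your proposal takes a genuinely different route from the paper, and it has two serious gaps that I do not think can be patched without essentially abandoning the strategy.

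\textbf{The stalk formula is not justified.} You claim that
\[
(\widetilde{An^{*}}(K^{\acute{e}t}_{X}))_{x} \simeq \mathrm{colim}_{U}\, K^{\acute{e}t}_{X}(U)
\]
over smooth affine \'{e}tale neighborhoods $U$ of a geometric point over $x$, ``chosen so that the analytifications $U(\C)$ form a shrinking system of contractible neighborhoods of $x$.'' But the realizations of \'{e}tale opens of $X$ are \emph{not} cofinal among analytic open neighborhoods of $x$ in $X(\C)$: an \'{e}tale morphism analytifies to a local biholomorphism, but the image of $U(\C)$ is an open subset whose size is not controlled by refining the \'{e}tale neighborhood. (Already over $\mathbb{A}^{1}_{\C}$, \'{e}tale neighborhoods of a closed point realize to open sets with complement a finite set of points plus branch locus, nothing like shrinking discs.) Beyond cofinality, the functor $\widetilde{An^{*}}_{X}$ is not the inverse image of a geometric morphism from $Shv_{Sp}(X(\C))$ to $Shv^{\acute{e}t}_{Sp}(X)$ --- it is a composite of \'{e}tale sheafification, $\mathbb{A}^{1}$-localization, a morphism of sites, and Ayoub's $\mathbb{D}^{1}$-localization --- so you cannot argue that $x^{*}\circ \widetilde{An^{*}}_{X}$ is simply the \'{e}tale-stalk functor. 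You flag the ``interchange'' between these functors as the principal technical obstacle, but the proposal does not resolve it, and I do not believe it can be resolved in the form stated.

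\textbf{The reduction to Blanc's fundamental theorem uses the wrong result in the wrong direction.} The fundamental theorem of topological $K$-theory asserts $K^{top}(\text{Perf}(U)) \simeq KU(U(\C))$, where $K^{top}$ is the Bott-inverted semi-topological $K$-theory. The object in the statement of the theorem lives \emph{before} Bott inversion, and the paper is explicit (Section~7.3) that $K^{st}_{*}(U)/n \not\simeq ku_{*}(U(\C))/n$ in general; the semi-topological $K$-theory is not the connective cover of $K^{top}$, and Bott inversion is a localization that loses information which cannot be recovered by truncation. So ``passing to connective covers'' does not produce the desired comparison. The correct input is Blanc's Theorem~4.5, namely $\widetilde{An^{*}_{\C}}(K^{\acute{e}t}_{\C}) \simeq ku$, which already has the \'{e}tale sheafification built in. Relatedly, the appeal to Gabber--Suslin rigidity gives comparison statements with finite coefficients; the integral statement is exactly what requires further work and is what Blanc's Theorem~4.5 supplies.

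For contrast, the paper's proof avoids stalks and local comparison maps entirely: it proves the lemma that the nonconnective \'{e}tale $K$-theory sheaf pulls back, i.e.\ $\phi^{*}(K^{\acute{e}t}_{\C}) \simeq K^{\acute{e}t}_{X}$ (this is the bulk of the work, and requires tracking the Bass--Thomason--Trobaugh construction through $\phi^{*}$), observes that $\widetilde{An^{*}_{X}}\circ\phi^{*} \simeq \phi^{*}_{an}\circ\widetilde{An^{*}_{\C}}$, and then reduces to the absolute case $\widetilde{An^{*}_{\C}}(K^{\acute{e}t}_{\C}) \simeq ku$, since $\phi^{*}_{an}(ku) \simeq \underline{ku^{X(\C)}}$. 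Your construction of the comparison morphism $\eta$ via $\mathbb{A}^{1}$-invariance and \'{e}tale descent of $ku(-(\C))$ is reasonable and could coexist with this, but the argument that $\eta$ is an equivalence should proceed by this pullback bootstrap, not stalkwise.
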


\begin{proof}
 By \cite[Theorem 4.5]{blanc},  
\[
 \widetilde{An_{\C}^{*}}( K^{\acute{e}t}_{\C} ) \simeq  || \underline{K} ||_{\mathbb{S}} \simeq ku \in Sp
\]
where $\underline{K}$ denotes the (non-connective) algebraic $K$-theory presheaf over $Spec(\C)$ and $ku$ is the connective topological $K$-theory spectrum.  The first equivalence again follows from the fact that $An_{X}^{*} $ and  $|| - ||_{\mathbb{S}}$ agree on smooth schemes and send \'{e}tale local equivalences in $\text{Pre}_{Sp}(\text{Aff}_{\C})$ to equivalences of spectra (cf. \cite[Theorem 3.4, 3.18]{blanc}) and hence factor through \'{e}tale sheaves.  Now let $\phi : X \to Spec(\C)$ be an arbitrary quasi-compact, quasi-separated scheme.

We have the following commutative diagram of functors: 
\[
\begin{CD}
Pre_{Sp}(Sm_{\C}) @> L_{\et} >> Shv{^{\acute{e}t}_{Sp}}(\C) @>>> Shv^{\acute{e}t, \mathbb{A}^1}_{Sp}(\C) @> An^{*}>> Shv_{Sp}(*) \simeq Sp\\
@VV \phi^{*}V @VVV @VVV @VV \phi{_{an}^{*}}V\\
Pre_{Sp}(Sm_{X}) @> L_{\et} >> Shv_{Sp}^{\acute{e}t}(X)@>>> Shv_{Sp}^{\acute{e}t,\mathbb{A}^1}(X) @> An^{*} >> Shv_{Sp}(X(\C))
\end{CD}
\]
The left squares commute by the properties of localization.  The right hand square commutes simply from the formal property that the following diagram of sites
\[
\begin{CD}
\text{Sm}_{\C} @>>> \text{AnSm}_{\C} \\
@VV V @VVV \\
\text{Sm}_{X}@>>> \text{AnSm}_{X(\C)} 
\end{CD}
\]
commutes and induces a commutative diagram of  left adjoints.

\noindent Now, by Lemma \ref{commutinglocalizations}, the above diagram could have been written as
\[
\begin{CD}
Pre_{Sp}(Sm_{\C}) @> L_{\Aa} >> Pre_{Sp}^{\Aa}(Sm_{\C}) @>L_{\et}>> Shv^{\acute{e}t, \mathbb{A}^1}_{Sp}(\C) @> An^{*}>> Shv_{Sp}(*) \simeq Sp\\
@VV \phi^{*}V @VVV @VVV @VV \phi{_{an}^{*}}V\\
Pre_{Sp}(Sm_{X}) @> L_{\Aa} >> Pre_{Sp}^{\Aa}(Sm_{\C})@>L_{\et}>> Shv_{Sp}^{\acute{e}t,\mathbb{A}^1}(X) @> An^{*} >> Shv_{Sp}(X(\C))
\end{CD}
\]
By \cite{hoyoiscdh} (we can also use \cite[Th\`{e}or\'{e}me 3.9]{cisinski} together with the fact that the motivic spectrum $KGL$ represents $KH$ in $\text{SH}(X)$), there will be an equivalence $\phi^{*}(KH_{\C}) \simeq KH_{X}$, where $KH_{Y}$, as usual denotes the $\Aa$-localization of the nonconnective $K$-theory presheaf on $\text{Sm}_{Y}$. This equivalence will persist upon applying \'{e}tale sheafification and analytic realization. Hence, $\widetilde{An^{*}}( K^{\acute{e}t}_{X})$ will be equivalent to $\phi_{an}^{*}(ku)$, the constant sheaf on $X(\C)$ associated to $ku$, which we know to be the sheaf $\underline{ku^{X(\C)}}$ 
\end{proof}

\begin{rem}
By Theorem \ref{bae}, the functor $\widetilde{An^{*}}: Shv^{\acute{e}t}_{Sp}(X) \to$ Shv$_{Sp}(X(\C))$, being symmetric monoidal, induces a functor $\widetilde{An^{*}}: Mod_{K^{\acute{e}t}_{X}}(Shv^{\acute{e}t}_{Sp}(X)) \to Mod_{\underline{ku^{X(\C)}}}(Shv_{Sp}(X(\C))$.  
\end{rem}

\subsection{Relative Topological $K$-theory}
We are now in position to define relative topological $K$-theory.  Recall the fact that there exists a localization 
\[
\text{Mod}_{ku} \to \Mod_{KU} 
\] 
given precisely by $M \mapsto M \wedge_{ku} KU$.  This induces a functor at the level of $\infty$-categories of sheaves of spectra
\begin{align*}
& L_{KU}: Shv_{\Mod_{ku}}(X(\C))  \simeq \Mod_{ku}\otimes^{L} Shv_{Sp}(X)\\
& \xrightarrow{L_{KU} \otimes^{L} id }   \Mod_{KU}\otimes^{L} Shv_{Sp}(X) 
 \simeq Shv_{\Mod_{KU}}(X(\C).
\end{align*}
We can see, via the description of this functor, that the constant sheaf $\underline{ku^{X(\C)}}$ is sent to the constant sheaf $\underline{KU^{X(\C)}}$ on $X(\C)$, associated to $KU$. Indeed, the constant sheaf functor will commute with $KU$-localization.

\begin{defn}
Let $X$ be a quasi-compact, quasi-separated scheme over $\C$.   Let $T\in Cat^{\text{perf}}(X)$  be a $\text{Perf}(X)$-linear dg-category.  We let
\[
K_{X}^{top}(T): = L_{KU}(\widetilde{An_{X}^{*}}(\underline{K^{\acute{e}t}_{X}(T))})
\]

\noindent be the \emph{relative topological $K$-theory} of $T$ over $X$.

\end{defn}
We show that when $X=Spec(\C)$, this recovers the topological $K$-theory as defined in \cite{blanc}.

\begin{prop}
There is an equivalence $K^{top}_{X}(T) \simeq K^{top}(X)$ when   $X = Spec(\C)$.
\end{prop}

\begin{proof}
This is a consequence of  the fact that $An^{*}$ agrees with the restriction of $||-||_{\mathbb{S}}$ to smooth schemes and that this factors through \'{e}tale sheaves.  
\end{proof}
\subsection{Relative Topological $K$-theory as a Motivic Realization}

We give an alternate description of $K^{top}_{X}(T)$ as a \emph{motivic realization} of the dg category $T$, in the setting of stable motivic homotopy theory.  

Let $T \in \text{Cat}^{\text{perf}}(X)$. Then the nonconnective algebraic $K$-theory presheaf $\underline{K}_{X}(T)$ associated to it defined by: 
\[
Y \mapsto K(\Perf(Y) \otimes_{\mathcal{O}_{X}} T) 
\]
is a sheaf with respect to the Nisnevich topology on $\text{Sm}_{X}$.  This follows from \cite[Theorem 5.4]{luriedescent}) where it is shown that the $\widehat{\mathcal{C}at_{\infty}}$-valued presheaf on $\text{Sm}{/X}$
\[
Y \to \Perf(Y) \otimes_{\mathcal{O}_{X}} T 
\]
satisfies descent in the \'{e}tale topology  and the well known fact that non-connective $K$-theory satisfies Nisnevich descent (eg. by \cite{thomason})  Hence, we may view $\underline{K}_{X}(T)$ as an object in $Shv_{Sp}^{Nis}(X)$.

As in section 2, we let  $L_{\acute{e}t}: Shv^{Nis}_{Sp}(X) \to Shv^{\acute{e}t}_{Sp}(X)$ denote the \'{e}tale sheafication functor.  By definition, the functor 
\[
\widetilde{An}^{*} \circ  L_{\acute{e}t}   : Shv^{Nis}_{Sp}(X) \to Shv_{Sp}(X(\C)) 
\]
sends $\mathbb{A}^1$-equivalences of sheaves to equivalences in the target category.  Hence it factors through 
\[
L_{\mathbb{A}^1}: Shv^{Nis}_{Sp}(X) \to Shv^{Nis, \mathbb{A}^1}_{Sp}(X)
\]
by the universal property of $\mathbb{A}^1$ localization.  Following the conventions in \cite{cisinski}, we set $\underline{KH}_X(T):= L_{\mathbb{A}^1}(\underline{K}_{X}(T))$  to be the \emph{homotopy $K$-theory} sheaf associated to $T$. We remark that $\underline{KH}_{X}(T)$ is a module over homotopy invariant $K$-theory $\underline{KH}_{X}$ and so inherits a multiplication by the Bott  map $\beta$, where 

\[
\beta \in \pi_{0}\Map_{Shv_{Sp}^{Nis, \mathbb{A}^1}(X)} (\mathbb{P}_{X}^{1}, \underline{KH}_{X})
\] 
is the map reflecting the projective bundle theorem.  (see for example, the proof of proposition 4.3.2 in \cite{blanc}).   
\\

\noindent We pass to $\text{SH}(X)$, the stable motivic category over the scheme $X$.  By \cite[Corollary 2.22]{robalo} we may define 
\[
\text{SH}(X) = Stab_{\mathbb{P}_{X}^1}(Shv_{Sp}^{Nis, \mathbb{A}^1}(X)) := \text{lim}(Shv_{Sp}^{Nis, \mathbb{A}^1}(X) \xleftarrow{ \Omega_{\mathbb{P}^1_X } } Shv_{Sp}^{Nis, \mathbb{A}^1}(X) \ \leftarrow ...)
\]
Via the arguments in section 2.16 in \cite{cisinski} we canonically associate an object $KGL_{X}(T) \in \text{SH}(X)$ to $\underline{KH}_{X}(T)$; in effect, $KGL_{X}(T)$ will be the ``constant spectrum" with structure maps given by 
\[
\beta: \underline{KH(T)} \to \Omega_{\mathbb{P}^{1}_{X} }(\underline{KH(T)} := Map_{Shv_{Sp}^{Nis, \mathbb{A}^1}(X)}( \mathbb{P}^1_{X}, \underline{KH(T)})
\]

By (see e.g. \cite{ayoub} or \cite{robalo}) the universal property of $\text{SH}(X)$ together with theorem \ref{ayoub}, the functor $ Y \mapsto \Sigma^{\infty}_{+}(Y(\C))$  uniquely induces a functor 
\[
\Bet: \text{SH}(X) \to \text{Shv}_{Sp}(X(\C)) 
\]
We remark that $\Bet(KGL_{X}) \simeq \underline{KU^{X(\C)}}$. To see this, let $f: X \to \text{Spec}(\C)$ be the structural morphism; by \cite[Proposition 2.4]{ayoub}, $\Bet f^{*} \simeq f^{*} \mathbf{Betti}_{\C}$ and $\Bet KGL_{\C} \simeq KU$ (eg. by \cite[Section 4.6]{blanc}). It follows then, from the fact that $\Bet$ is symmetric monoiadal (\cite[Lemme 2.2]{ayoub}), that there is an induced functor 
\[
\text{Mod}_{KGL_{X}}[\text{SH}(X)] \to \text{Mod}_{\underline{KU^{X(\C)}}}(  \text{Shv}_{Sp}(X(\C)) ).
\]

We have the following proposition 

\begin{prop} \label{motives}
There is an equivalence 
\[
K^{top}_{X}(T) \simeq \Bet(KGL_{X}(T)).
\]
\end{prop}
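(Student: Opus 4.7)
The plan is to compare the two constructions by unwinding them in terms of the same underlying data, namely the presheaf $\underline{K(T)}$ on $\text{Sm}_{/X}$. Both sides apply, in some order, the following four operations: (a) sheafification, (b) $\mathbb{A}^1$-localization, (c) passage to the analytic site over $X(\C)$, and (d) Bott inversion. The content of the proposition is that these operations commute to the extent required to transform $KGL(T) \in \text{SH}(X)$ into $K^{top}_X(T) \in Shv_{Sp}(X(\C))$.

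\textbf{Step 1: identifying the underlying sheaf.} The motivic spectrum $KGL(T)$ is built from $\underline{KH(T)} = L_{\mathbb{A}^1}(\underline{K(T)})$ in $Shv_{Sp}^{Nis, \mathbb{A}^1}(X)$, equipped with bonding maps given by the Bott map $\beta$ from the projective bundle formula. Since $\Bet$ is uniquely characterized by $Y \mapsto \Sigma^\infty_+ Y(\C)$, and since this description factors through both Nisnevich-to-étale sheafification (the analytic topology being finer than the étale one, so étale hypercovers are sent to weak equivalences of analytic sheaves) and $\mathbb{A}^1$-localization (by the remark following Theorem \ref{ayoub}), one obtains that $\Bet$ applied to the pre-stabilized $\underline{KH(T)}$ is canonically equivalent to $\widetilde{An_{X}^{*}}(L_{\acute{e}t}(\underline{K(T)})) = \widetilde{An_X^*}(\underline{K^{\acute{e}t}_X(T)})$, which is the input data for $K^{top}_X(T)$.

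\textbf{Step 2: matching the stabilizations.} By symmetric monoidality, $\Bet$ sends the invertible object $\mathbb{P}^1_X \in \text{SH}(X)$ to $\mathbb{P}^1(\C) \simeq S^2$ in $Shv_{Sp}(X(\C))$, so a $\mathbb{P}^1_X$-spectrum is carried to an $S^2$-spectrum, equivalently an ordinary sheaf of spectra realized as a sequential colimit over the bonding maps. For $T = \mathbb{1}$, Theorem \ref{bae} gives $\Bet(KGL(\mathbb{1})) \simeq \underline{ku^{X(\C)}}$ and identifies $\Bet(\beta)$ with the topological Bott element acting on the $\underline{ku^{X(\C)}}$-module structure furnished by lax monoidality of $\widetilde{An^*_X}$. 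Hence for general $T$, the object $\Bet(KGL(T))$ is obtained from $\widetilde{An^*_X}(\underline{K^{\acute{e}t}_X(T)})$ by inverting this topological Bott action; but on a $\underline{ku^{X(\C)}}$-module this is exactly the effect of the functor $L_{KU}$. Combining, one obtains the equivalence $\Bet(KGL(T)) \simeq L_{KU}(\widetilde{An^*_X}(\underline{K^{\acute{e}t}_X(T)})) = K^{top}_X(T)$.

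\textbf{Main obstacle.} The principal technical point is verifying that the motivic Bott map $\beta$ assembling the $\mathbb{P}^1_X$-spectrum $KGL(T)$ corresponds, under $\Bet$, to the topological Bott element acting on $\widetilde{An^*_X}(\underline{K^{\acute{e}t}_X(T)})$ through its $\underline{ku^{X(\C)}}$-module structure. Over $\text{Spec}(\C)$ this matches Blanc's identification $\Bet(KGL) \simeq KU$ from \cite{blanc}; the relative case can be reduced to this by base change along $\phi : X \to \text{Spec}(\C)$, using the equivalence $\phi^*(KGL_\C) \simeq KGL_X$ (which follows from the pullback compatibility of nonconnective $K$-theory established in the proof of Theorem \ref{bae}) together with the commutation of $\Bet$ with the pullback $\phi^*$ coming from the compatibility of the sites $\text{Sm}_{/(-)}$ and $\text{AnSm}_{/(-)(\C)}$ with pullback.
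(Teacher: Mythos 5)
Your proposal follows essentially the same route as the paper's proof: both recognize that $\Bet$ sends $\mathbb{P}^1_X$ to $S^2$, hence carries the $\mathbb{P}^1$-spectrum $KGL(T)$ to a sequential colimit along Bott maps in $Shv_{Sp}(X(\C))$, and both identify this colimit with the $L_{KU}$-localization of $\widetilde{An^*_X}(\underline{K^{\acute{e}t}_X(T)})$. The one difference is that you spell out (in your "main obstacle" paragraph, via base change to $\text{Spec}(\C)$ and Blanc's $\Bet(KGL)\simeq KU$) the identification of the motivic Bott map with the topological Bott element, a point the paper asserts without elaboration.
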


\begin{proof}
By \cite{ayoub}, the functor $An^{*}_{X}$ sends $\mathbb{P}^{1}_{X}$ to the locally constant sheaf of spectra $S^{2}_{X} $ associated to the sphere $S^2$.  In particular, by definition of $\text{SH}(X)$ 
\[
\Bet: \text{SH}(X) \to Shv_{Sp}(X(\C))   
\]
factors through the equivalence 
\[
\Omega^{\infty}_{S^2}: Shv_{Sp^{S^2}}(X(\C)) \simeq Shv_{Sp}(X(\C)). 
\]

\noindent If we apply $\Bet$ to $KGL_{X}(T)$ we obtain the colimit of the following diagram 
\[
An^{*}_{X}(\underline{KH(T)}) \xrightarrow{\beta_{T}} \Omega^{2} ( An^{*}_{X}(\underline{KH(T)})) \xrightarrow ... 
\]
This is precisely the formula for the $L_{KU}$ localization of $An^{*}(\underline{KH(T)})$ of the $\infty$-category of $ku^{X(\C)}$ modules in $Shv_{Sp}(X(\C))$, thereby giving us the equivalence 
\[
K^{top}_{X}(T) = L_{KU}(An^{*}_{X}(\underline{K(T)}) \simeq \Bet(KGL(T))
 \]

\end{proof}
\subsection{Functoriality properties of relative topological $K$-theory}
Let $\phi: Y \to X$ be a map of schemes. We have the  following  restriction/extension adjunction :
\[
\phi^{*}: \Catperf (X) \rightleftarrows \Catperf(Y)  : \phi_{*}
\]
 where $\phi^{*}(T) \simeq T \otimes_{\Perf (X) } \Perf(Y)$  Associated to  the induced map of spaces $\phi: Y(\C) \to X(\C) $, is the adjunction  

\[
\phi^{*} : \text{Shv}_{Sp}(X (\C) \rightleftarrows \text{Shv}_{Sp}(Y (\C)) : \phi_{*} 
\]

Note that when $X = Spec(\C)$, the functor $\phi_{*}: \text{Shv}_{Sp}(X (\C)) \to \text{Shv}_{Sp}(Y (\C)) \simeq Sp$ is none other than the functor sending a sheaf of spectra to its spectrum of global sections.\\
It is immediately clear, by the properties of restriction on smooth morphisms, that $K^{top}_{X} \circ \phi^{*} \simeq \phi^{*} \circ K^{top}_{Y}$ when $\phi: X \to Spec(\C)$ is smooth.  This equivalence, via the adjunction morphisms 
\[
\mathbb{1} \to \phi_{*} \phi^{*}, \phi^{*} \phi_{*} \to \mathbb{1}
\]
gives rise to the following natural transformation    
\[
\eta: K^{top}_{X} \circ \phi_{*} \to  \phi_{*} \circ K^{top}_{Y}
\]
We do not yet know this to be an equivalence even for smooth $Y$.  This issue is particularly transparent before we apply the Bott-localization functor $L_{KU}$.  If we set  $X = Spec(\C)$, and let $Y$ be an arbitrary $\C$-scheme, then  by theorem \ref{bae},
\[
\phi_{*} \widetilde{An^{*}_{X}}(\underline{K^{\acute{e}t}_{X}(\mathbb{1})}) \simeq ku(X(\C)).
\]
Meanwhile, the semi-topological $K$-theory, $K^{st}(X)$, is typically not equivalent to $ku(X(\C))$.  Indeed $K_{*}^{st}(X)/n \simeq K_{*}(X)/n$ which is not true for connective complex $K$-theory with finite coefficients.  We suspect however that applying  Bott localization $L_{KU}$  \textbf{does eliminate} this discrepancy and plan to investigate this in future work.  The following proposition, which we will be critical for us here, serves as evidence for this hypothesis.

\begin{prop} \label{yoverx}
Let $\phi : Y \to X$ be a scheme, proper over $X$.  Let $\emph{Perf(Y)}$ be the associated $dg$-category of perfect complexes, viewed as $\emph{Perf}(X)$-linear category. Then, 
\[
K^{top}_{X}({\emph{Perf}(Y)}) \simeq \phi_{*}(K^{top}_{Y}( \mathbb{1}_{Y}))
\]  
where $\mathbb{1}_Y$ is the unit in $\emph{Cat}^{\emph{perf}}(Y)$.  

\end{prop}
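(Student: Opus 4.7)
The plan is to leverage the motivic realization description of Proposition \ref{motives}, which identifies $K^{top}_X(T) \simeq \Bet(KGL(T))$, so that the problem reduces to two compatibilities inside the stable motivic category: a projection-formula style identification of $KGL(\Perf(Y))$ inside $\text{SH}(X)$, and proper base change for Ayoub's Betti realization.

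First, I would prove the motivic equivalence
\[
KGL(\Perf(Y)) \simeq \phi_*^{mot}(KGL_Y) \quad \text{in } \text{SH}(X).
\]
At the level of the algebraic $K$-theory presheaves on $\text{Sm}_{/X}$, for any smooth $Z \to X$, flat base change for perfect complexes gives
\[
\Perf(Z) \otimes_{\Perf(X)} \Perf(Y) \simeq \Perf(Z \times_X Y),
\]
with $Z \times_X Y$ smooth over $Y$. Hence $\underline{K(\Perf(Y))}(Z) \simeq K(\Perf(Z \times_X Y))$, which is precisely the value at $Z$ of $\phi_* \underline{K_Y}$, where $\phi_*$ is the right adjoint induced by the site morphism $Z \mapsto Z \times_X Y$. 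Being a right adjoint, $\phi_*$ commutes with Nisnevich sheafification, $\mathbb{A}^1$-localization, and $\mathbb{P}^1$-stabilization, so the presheaf-level equivalence propagates to the desired identification in $\text{SH}(X)$.

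Second, I would invoke the proper base change property for the relative Betti realization: since $\phi: Y \to X$ is proper, the induced analytic map $\phi^{an}: Y(\C) \to X(\C)$ is proper, and one has a natural equivalence
\[
\Bet \circ \phi_*^{mot} \simeq \phi_*^{an} \circ \mathbf{Betti}_Y
\]
where $\phi_*^{an}$ denotes the spectral pushforward along $\phi^{an}$. Putting the pieces together gives
\[
K^{top}_X(\Perf(Y)) \simeq \Bet(\phi_*^{mot} KGL_Y) \simeq \phi_*^{an}(\mathbf{Betti}_Y(KGL_Y)) \simeq \phi_*^{an}(K^{top}_Y(\mathbb{1}_Y)),
\]
which is exactly the statement of the proposition.

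The main obstacle is verifying the proper base change compatibility for $\Bet$ in the relative setting; Ayoub established this in the absolute case (over $\text{Spec}(\C)$), and the extension rests essentially on properness of $\phi^{an}$ so that $\phi_*^{an} \simeq \phi_!^{an}$, combined with the compatibility of $\Bet$ with the six-functor formalism on $\text{SH}$. The motivic identification $KGL(\Perf(Y)) \simeq \phi_*^{mot}(KGL_Y)$ is comparatively formal, relying only on the standard base change for perfect complexes along flat morphisms and the fact that the sheafified, $\mathbb{A}^1$-localized, $\mathbb{P}^1$-stabilized $K$-theory construction is compatible with right adjoints of morphisms of topoi.
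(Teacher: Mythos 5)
Your proof is correct and takes essentially the same route as the paper: reduce via Proposition \ref{motives} to the motivic statement, identify $KGL(\Perf(Y)) \simeq \phi_{*}KGL_{Y}$ in $\text{SH}(X)$, and invoke Ayoub's six-functor compatibility $\Bet \circ \phi_{!} \simeq \phi_{!} \circ \mathbf{Betti}_{Y}$ together with $\phi_{!} \simeq \phi_{*}$ for proper $\phi$. The only difference is that you spell out more explicitly the presheaf-level base-change identification $\underline{K(\Perf(Y))}(Z) \simeq K(\Perf(Z \times_{X} Y))$ and its compatibility with sheafification, $\mathbb{A}^{1}$-localization and $\mathbb{P}^{1}$-stabilization, which the paper compresses into the remark that this ``just follows from the formula for the pushforward in $\text{SH}(X)$.''
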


\begin{proof}
By proposition \ref{motives}, 
\[
K^{top}(\text{Perf}(Y)) \simeq \Bet(KGL_{X}(\text{Perf}(Y)). 
\]
Hence, we argue using stable motivic homotopy theory.   The map of schemes $ \phi: Y \to X$   
induces the push-forward $\phi_{*} : \text{SH}(Y) \to \text{SH}(X)$ fitting in the following diagram of functors, which we claim is commutative because of the properness assumption on $\phi$:

\begin{equation}\label{Sh}
\begin{CD} 
\text{SH}(Y) @> \mathbf{Betti}_{Y}>> Shv_{Sp}(Y(\C)) \\
@VV \phi_{*}V @VV   \phi_{*}V \\
\text{SH}(X) @> \Bet >> Shv_{Sp}(X(\C))
\end{CD}
\end{equation}
To see this, we note that the functor ${Shv}^{an} :\text{Sch}_{\C} \to \text{CAlg}(\mathcal{P}r^{L, st})$ given by 
\[
X \mapsto Shv_{Sp}(X(\C))
\] 
satisfies the six-functor formalism as described in \cite[Remarque 3.3]{ayoub}. In particular,  for any arbitrary map of schemes $f : X \to Y$, there will be an adjunction 
\[
f^{*}: Shv_{Sp}(Y(\C)) \to Shv_{Sp}(X(\C)) : f_{*}.
\]
Furthermore, Betti realization  
\[
\mathbf{Betti} : \text{SH}^{\otimes} \to Shv^{an}
\]
will be a natural transformation of such functors with the property that, for $\phi: X \to Spec(\C)$ proper, 
\[
\Bet\circ \phi_{*} \simeq  \phi_{*}  \circ\mathbf{Betti}_{Y}.
\]
We remark that these facts follow from \cite[Theoreme 3.4]{ayoub} with the stronger assumption that $\phi$ is projective; the case for proper morphisms (and using $\infty$-categorical language) is stated in \cite[Proposition A.4]{blanctoen}.

Next we remark that 
\[
\phi_{*}(KGL_{Y}(\mathbb{1}_{Y})) \simeq KGL_{X}(\Perf(Y)),
\]
where the right-hand side is the object in $\text{SH}(X)$ associated to the presheaf $KH_{X}(\text{Perf}(Y))$. This just follows from the formula  for the pushforward in $\text{SH}(X)$.  To deduce the proposition, it is therefore enough to understand $\phi_{*}(\mathbf{Betti}_{Y}(KH(\mathbb{1}_{Y}))) \in Shv_{Sp}(X(\C))$.

By Theorem \ref{bae},  $K^{top}_{Y}(\mathbb{1}_{Y})= \mathbf{Betti}_{Y}(KGL_{Y})$ is the sheaf of spectra on $Y(\C)$ sending an open subset $U \subseteq Y(\C) $ to $F(U, KU)$.  Its push-forward  $\phi_{*}(K^{top}_{Y}(\mathbb{1}_{Y}))$ will be the sheaf of spectra on $X(\C)$ defined by the assignment \[
V \mapsto F(\phi ^{-1}(V), KU)
\]
for any open set $V \subseteq X(\C)$, where $\phi: Y(\C) \to X(\C)$ is the induced map on analytic spaces.  By the commutativity of (\ref{Sh}), we now conclude that 
\begin{align*}
K^{top}_{X}(\text{Perf}(Y)) & \simeq \Bet(KGL_{X}(\Perf(Y)) \\
& \simeq \phi_{*}(\mathbf{Betti}_{Y}(KGL_{Y})) \\
& \simeq \phi_{*}(\underline{KU^{Y(\C)}}) \\
& \simeq \phi_{*}(K^{top}_{Y}(\mathbb{1}_{Y})).
\end{align*}
\end{proof}  

\begin{rem}
One may, in view of the discussion preceding Proposition \ref{yoverx}, think of it as a relative version of the theorem of Friedlander-Walker that Bott-inverted semi-topological $K$-theory recovers the topological $K$-theory of $X(\C)$. Of course, they were not working in a non-commutative setup but their definition has been shown in \cite{antieauheller} to be equivalent to Blanc's for dg-categories.
\end{rem}

\section{Local Systems and Twisted Cohomology theories}

We review the version of twisted topological $K$-theory we will be working with, in its modern homotopy theoretic formulation.   Although we will be focusing on twists of $KU$, one may twist any cohomology theory $E$ in an analogous manner.  More of the general theory may be found in \cite{ando} or \cite{twists}.

\subsection{Local Systems}

We first introduce the notion of local systems  of objects of an $\infty$-category $\mathcal{C}$ on a space $X$.  The category of local systems of $KU$-module spectra will play a central role in our version of twisted topological $K$-theory.
\begin{defn}
Let $X$ be a space, (thought of as an $\infty$-groupoid) and let $\mathcal{C}$ denote an arbitrary $\infty$-category.  We define the $\infty$-category of $\mathcal{C}$ valued local systems on $X$ by Loc$_{X}(\mathcal{C}) := \text{Fun}(X, \mathcal{C})$ \\

\end{defn}  

More generally, one may define a family of objects in a presentable $\infty$-category $\mathcal{C}$ parametrized by an object $X \in \mathcal{X}$ as
\[
\text{Shv}_{\mathcal{C}}(\mathcal{X}_{/X})
\]
If $\mathcal{X}= \mathcal{S}$, we recover the category $\mathcal{S}_{/ X}$ of local systems of a space.   
\\

We make a few remarks about  Loc$_{X}(\mathcal{C})$ for general categories $\mathcal{C}$.  By the properties of taking functor categories (see, for example \cite[Proposition 1.1.3.1]{lurie2016})  if $\mathcal{C}$ is stable, then so is Loc$_{X}(\mathcal{C})$.  Furthermore, Loc$_{X}(\mathcal{C})$ will be symmetric monoidal if $\mathcal{C}$ is itself symmetric monoidal.  The unit is precisely the constant functor $\mathbf{1}: X \to \mathcal{C}$ sending every zero simplex of $X$ to the unit  $\mathbf{1}_{\mathcal{C}} \in \mathcal{C}$ and every morphism to the identity morphism of  $\mathbf{1}_{\mathcal{C}}$.   
\\

Fix an $\infty$-topos $\mathcal{X}$ and a presentable $\infty$-category $\mathcal{C}$.  Recall from section \ref{sheaf} the definition of a locally constant object in category of sheaves $\text{Shv}_{\mathcal{C}}(\mathcal{X})$.  Local systems will be significant to us in part because they, in suitable situations, give another description of locally constant objects.

We investigate this identification first when $ \mathcal{C} = \mathcal{S}$.  For now, let $\mathcal{X}$ be a general $\infty$-topos and let $\pi^{*}: \mathcal{S} \to \mathcal{X}$ be the  left adjoint to the terminal geometric morphism.  This functor preserves finite limits and therefore admits a pro-left adjoint $\pi_{!} : \mathcal{X} \to \text{Pro}(\mathcal{S})$ to the $\infty$-category of pro-spaces.  We call $\pi_{!}(\mathbf{1})$  the \textit{shape} of $\mathcal{X}$.  We say that $\mathcal{X}$ is \emph{locally of constant shape} if  $\pi_{!}$ factors through the inclusion of constant pro-spaces $\mathcal{S} \to \text{Pro}(\mathcal{S})$.  In this setting, $\pi_{!}: \mathcal{X} \to \mathcal{S}$ will be a further left adjoint to $\pi^{*}$.
\\

We now specialize to the case where $X$ is a topological space which is locally contractible.  The $\infty$-topos $Shv(X)$ will be locally of constant shape by \cite{lurie2016} ; hence in this setting we have the morphism $\pi_{!}:   Shv(X) \to \mathcal{S}$, left adjoint to $\pi^{*}$.  

We now have a canonical functor:
\[
Shv(X) \simeq Shv(X)_{/ \mathbf{1}} \xrightarrow{ \pi_{!}} \mathcal{S}_{/ \pi _{!} (\mathbf{1})}
\]
which we denote as $\psi_{!}$.  This functor admits a right adjoint, which we denote by $\psi^{*}$.  It can be described informally by :
\[
\psi^{*}(Y) =  \pi^{*}(Y)  \times_{\pi^{*} \pi_{!}(\textbf{1})} \textbf{1}.
\]

\begin{thm} \label{constantshape}
Let $\mathcal{X}$ be an $\infty$-topos which is locally of constant shape  and let $\psi^{*}: \mathcal{S}_{/ \mathcal{\pi_{!}(\textbf{1})}} \to \mathcal{X}$  be the above functor.  Then $\psi^{*}$ is a fully faithful embedding, whose essential image is the full subcategory of $\mathcal{X}$ spanned by the locally constant objects. 

\end{thm}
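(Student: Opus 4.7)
The plan is to show that the counit $\psi_! \psi^* \to \mathrm{id}$ is an equivalence (giving full faithfulness of $\psi^*$) and then identify the essential image by showing that the unit $\mathrm{id} \to \psi^* \psi_!$ is an equivalence precisely on locally constant objects. Both directions will use the hypothesis of local constancy of shape to reduce to computations over a trivializing cover.

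For full faithfulness, I would unpack the formula
\[
\psi_! \psi^*(Y) \;=\; \pi_!\bigl(\pi^*(Y) \times_{\pi^* \pi_!(\mathbf{1})} \mathbf{1}\bigr).
\]
Since $\mathcal{X}$ is locally of constant shape, $\pi_!$ is an honest left adjoint to $\pi^*$, and one expects a projection formula of the form $\pi_!(\pi^*(A) \times_{\pi^* B} C) \simeq A \times_B \pi_!(C)$ valid whenever the base is constant. Applying this with $A = Y$, $B = \pi_!(\mathbf{1})$, $C = \mathbf{1}$ collapses the expression to $Y \times_{\pi_!(\mathbf{1})} \pi_!(\mathbf{1}) \simeq Y$. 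Deducing the projection formula is the technical heart of the argument; it should follow from the fact that $\pi_!$ preserves colimits, together with the observation that $\pi^* \pi_!(\mathbf{1})$ is constant, so that the fiber product can be rewritten as a colimit of simpler pullbacks that $\pi_!$ manifestly respects.

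Next I would verify that $\psi^*(Y)$ is always locally constant. Pick a cover $\{U_\alpha \to \mathbf{1}\}$ for which each slice $\mathcal{X}_{/U_\alpha}$ has constant shape; then $\pi^*(Y)|_{U_\alpha}$ is constant on the slice, $\pi^* \pi_!(\mathbf{1})|_{U_\alpha}$ becomes the pullback of a constant sheaf, and the fiber product defining $\psi^*(Y)$ restricts on $U_\alpha$ to a constant object of $\mathcal{X}_{/U_\alpha}$. This places the essential image inside the locally constant subcategory.

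For the reverse inclusion, let $F \in \mathcal{X}$ be locally constant with cover $\{U_\alpha\}$ and consider the counit $\psi^* \psi_!(F) \to F$. Both sides are locally constant, so it suffices to check that the map becomes an equivalence on each $U_\alpha$. On such a chart $F|_{U_\alpha}$ is constant, and the assertion reduces to the corresponding statement for the constant case, where fully faithfulness of $\pi^*$ on constant objects in a topos of constant shape makes it immediate. The main obstacle in the whole argument is the projection formula step, which depends crucially on the hypothesis of local constancy of shape ensuring both the existence of $\pi_!$ as a left adjoint and the well-behavedness of its interaction with fiber products over constant objects.
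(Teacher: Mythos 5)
The paper offers no proof here --- it simply cites \cite[Theorem A.1.15]{lurie2016} --- so there is no in-paper argument to measure your proposal against, and I will assess it on its own terms. Your overall architecture (establish a projection formula for $\pi_!$, use it to collapse $\psi_!\psi^*$ to the identity, then verify the unit on a trivializing cover) is the right skeleton and, as far as I can tell, matches the shape of the argument in Lurie. The projection formula $\pi_!(\pi^*(A)\times_{\pi^*B}C)\simeq A\times_{B}\pi_!(C)$ is, as you say, the technical crux; it does ultimately come from universality of colimits in an $\infty$-topos and the fact that $\pi_!$ preserves colimits, but the passage from those facts to a pullback formula over a constant base is a real argument, and you leave it at the level of intuition.

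Where the proposal actually breaks is the ``reverse inclusion.'' A small slip first: $F\to\psi^*\psi_!(F)$ is the \emph{unit} of the adjunction $\psi_!\dashv\psi^*$, not the counit --- you already spent the counit $\psi_!\psi^*\to\mathrm{id}$ on full faithfulness. More seriously, the appeal to ``fully faithfulness of $\pi^*$ on constant objects in a topos of constant shape'' is false as stated: $\pi^*$ is fully faithful only when the shape is \emph{trivial}, not merely constant. For example, with $\mathcal{X}=\mathcal{S}_{/S}$ one computes $\mathrm{Map}_{\mathcal{X}}(\pi^*A,\pi^*B)\simeq\mathrm{Map}(A,\mathrm{Map}(S,B))$, which equals $\mathrm{Map}(A,B)$ only when $S$ is contractible; the whole point of passing from $\pi^*$ to $\psi^*$ is that $\pi^*$ fails to be fully faithful when $\pi_!(\mathbf{1})$ is nontrivial. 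A cover $\{U_\alpha\}$ that trivializes the locally constant object $F$ need not trivialize the shape of the slices $\mathcal{X}_{/U_\alpha}$, and restricting the unit $F\to\psi^*\psi_!(F)$ to a slice does not obviously reproduce the corresponding unit for that slice --- the $\pi_!$ of $\mathcal{X}$ and the $\pi_!$ of $\mathcal{X}_{/U_\alpha}$ differ, and relating them is itself an instance of the projection/base-change formula. So the step ``reduce to the constant case and apply full faithfulness of $\pi^*$'' does not go through without substantially more work; that extra work is exactly what is contained in the cited theorem.
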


\begin{proof}
This is \cite[Theorem A.1.15]{lurie2016}. 
\end{proof}

If $X$ is again, locally contractible, then it satisfies the conditions of \cite[Definition A.4.15]{lurie2016} and therefore is of \emph{singular shape}; this means that we may identify $\pi_{!}(\mathbf{1})$ with the simplicial set $\text{Sing}(X)$.  

We remark further that the identification of theorem \ref{constantshape} is symmetric monoidal.  This follows from the fact that we can think of $\psi^{*}: \mathcal{S}_{/ \mathcal{\pi_{!}(\textbf{1})}} \to Shv(X)$ as a composition of functors 
\[
\pi^{*} : \mathcal{S}_{/ \pi_{!}(\textbf{1})} \to Shv(X)_{/ \pi^{*} \pi_{!}(\mathbf{1})}
\]
followed by the change of base functor 
\[
Shv(X)_{/ \pi^{*} \pi_{!}(\mathbf{1})} \to Shv(X)_{/ \mathbf{1}} \simeq Shv(X)
\]
induced by the unit of the adjunction $ \mathbf{1} \to \pi^{*} \pi_{!}(\mathbf{1})$.  Of course, each of these functors preserve cartesian products, and therefore preserve the relevant symmetric monoidal structure.    To recap, we have displayed $\psi^{*}: Loc_{Sing(X)}(\mathcal{S}) \simeq \mathcal{S}_{/ Sing(X)} \to Shv(X)$ as an algebra map in $\mathcal{P}r^{L}$.
\\

To promote this to the level of spectra, we recall from \cite[Section 1.4.4]{lurie2016} that the stabilization functor is functorial at the level of presentable $\infty$-categories; in our situation this means we can stabilize the adjunction  
\[
\psi_{!} : Shv(X) \leftrightarrow \mathcal{S}_{/ \pi_{!}(\mathbf{1})} \simeq Loc_{\pi_{!}(\mathbf{1})}(\mathcal{S}) : \psi^{*}
\]
to obtain the following adjunction of presentable stable $\infty$-categories
\[
Stab(\psi_{!}) : Shv_{Sp}(X) \leftrightarrow Stab(\mathcal{S}_{/ \pi_{!}(\mathbf{1})}) = Loc_{\pi_{!}(\mathbf{1})}(\mathcal{S}) : Stab(\psi^{*}).
\] 
It now follows from the functoriality of stabilization that this functor is fully faithful onto its image.  Indeed as endofunctors on $Loc_{X}(Sp)$, the following chain of equivalences hold 
\[
Id \simeq Stab( \psi_{!} \circ \psi^{*}) \simeq Stab(\psi_{!}) \circ Stab(\psi^{*})
\]
Furthermore, $Stab( \psi^{*})$ is symmetric monoidal.  Indeed it may be displayed as the composition of symmetric monoidal functors 
\[
Loc_{\pi_{!}(\mathbf{1})}(Sp) \xrightarrow{ Stab(\pi^{*})} Shv_{Sp}(X)_{/ \pi^{*} \pi_{!}(\bold{1})} \rightarrow  Shv_{Sp}(X)
\]
where the second map is the stabilization of the functor induced by pullback along $\bold{1} \to \pi^{*} \pi_{!}(\bold{1})$.

We may summarize the above discussion with the following proposition:

\begin{prop} \label{locallyconstant}
Let $X$ be a (locally contractible) topological space.  There exists a symmetric monoidal fully faithful right adjoint $\psi^{*}: \emph{Loc}_{X}(Sp) \to Shv_{Sp}(X)$ with essential image the locally constant sheaves of spectra.  

\begin{rem}
In the above proposition, we may replace the category $Sp$ with $\text{Mod}_{KU}$, or more generally $\text{Mod}_{E}$ for any ring spectrum $E$.   
\end{rem}
\end{prop}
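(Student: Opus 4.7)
The plan is to assemble the proposition essentially as a summary of the discussion that precedes it, with the only new work being a careful check that fully faithfulness, symmetric monoidality, and the description of the essential image all descend through stabilization.

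First I would fix the setup. Since $X$ is locally contractible, $Shv(X)$ is an $\infty$-topos of locally constant shape in the sense of \cite{lurie2016}, so the terminal geometric morphism's left adjoint $\pi^{*}: \mathcal{S} \to Shv(X)$ admits a further left adjoint $\pi_{!}$. Composing the equivalence $Shv(X) \simeq Shv(X)_{/\mathbf{1}}$ with $\pi_{!}$ and its right adjoint produces the unstable adjunction
\[
\psi_{!}: Shv(X) \rightleftarrows \mathcal{S}_{/\pi_{!}(\mathbf{1})} \simeq Loc_{X}(\mathcal{S}) : \psi^{*},
\]
where for the last equivalence we use that $X$ is of singular shape. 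By Theorem \ref{constantshape}, $\psi^{*}$ is fully faithful with essential image exactly the locally constant objects of $Shv(X)$. Symmetric monoidality of $\psi^{*}$ at this level follows by writing it as the composite of $\pi^{*}: \mathcal{S}_{/\pi_{!}(\mathbf{1})} \to Shv(X)_{/\pi^{*}\pi_{!}(\mathbf{1})}$ (which preserves finite products, being a left adjoint in a geometric morphism) with the base change along the unit $\mathbf{1} \to \pi^{*}\pi_{!}(\mathbf{1})$, which is visibly compatible with products.

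Next I would stabilize. The stabilization functor $Stab: \mathcal{P}r^{L} \to \mathcal{P}r^{L,st}$ from \cite{lurie2016} is itself symmetric monoidal and preserves adjunctions, so we obtain
\[
Stab(\psi_{!}): Shv_{Sp}(X) \rightleftarrows Loc_{X}(Sp) : Stab(\psi^{*}).
\]
Fully faithfulness of $Stab(\psi^{*})$ follows from the counit criterion: $\psi^{*}$ being fully faithful means the counit $\psi_{!} \circ \psi^{*} \to \mathrm{Id}$ is an equivalence, and applying $Stab$ to this equivalence (which commutes with composition) shows that the stabilized counit $Stab(\psi_{!}) \circ Stab(\psi^{*}) \to \mathrm{Id}$ is again an equivalence. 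Symmetric monoidality transfers because $Stab$ is symmetric monoidal, and $Stab(\psi^{*})$ factors as the stabilization of the symmetric monoidal composite $\pi^{*}$ followed by base change along the unit, exactly as in the unstable case.

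The only step requiring a small additional verification is the identification of the essential image with the locally constant sheaves of spectra. Here I would argue that a sheaf $\mathcal{F} \in Shv_{Sp}(X)$ is locally constant in the sense of Definition \ref{bimbo} (applied to the category of sheaves of spectra) precisely when its restriction to a cover by contractible opens lies in the image of $\pi^{*}: Sp \to Shv_{Sp}(U_{\alpha})$, and this is detected objectwise by the unstable fully-faithful characterization after looping/delooping finitely many times. The main obstacle is keeping this clean: one wants to avoid re-proving Lurie's characterization in the stable setting, and the cleanest route is probably to observe that the full subcategory of locally constant objects is closed under the stabilization process and that $Stab(\psi^{*})$ hits all of them by applying the unstable essential-image statement to each connective cover $\tau_{\leq n}\mathcal{F}$ and passing to the limit. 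The final remark about replacing $Sp$ with $Mod_{KU}$ is then formal: tensoring the above adjunction over $Sp$ with $Mod_{KU}$ in $\mathcal{P}r^{L,st}$ preserves all of the relevant properties.
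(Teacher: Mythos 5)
Your argument matches the paper's own nearly line for line: same unstable adjunction $\psi_! \dashv \psi^*$ coming from the locally-constant-shape hypothesis and Lurie's Theorem A.1.15, same factorization of $\psi^*$ as $\pi^*$ followed by base change to establish symmetric monoidality, same appeal to functoriality of stabilization to upgrade the adjunction, and same counit argument ($Id \simeq Stab(\psi_!\circ\psi^*) \simeq Stab(\psi_!)\circ Stab(\psi^*)$) for full faithfulness. The final remark about tensoring with $Mod_{KU}$ is also the route the paper implicitly takes.

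The one place you go beyond the paper is the essential-image claim, and you are right to flag it: the paper's surrounding discussion establishes only full faithfulness and symmetric monoidality of $Stab(\psi^*)$ and then states without further comment that the essential image is the locally constant sheaves of spectra. However, the fix you sketch is the weakest part of your write-up. Arguing by Postnikov towers (``applying the unstable essential-image statement to each connective cover $\tau_{\leq n}\mathcal{F}$ and passing to the limit'') has two problems. First, $\tau_{\leq n}\mathcal{F}$ is a truncated sheaf of spectra, not a sheaf of spaces, so Lurie's unstable characterization does not apply to it directly; you would need to pass through $\Omega^\infty$ and track deloopings, which is precisely the bookkeeping you say you want to avoid. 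Second, recovering $\mathcal{F}$ as the limit of its truncations requires Postnikov completeness of $Shv_{Sp}(X)$, which is not automatic for an arbitrary locally contractible $X$ (it typically needs some finite-dimensionality or hypercompleteness hypothesis). A cleaner route would be to observe that since $X$ is locally contractible it has a cover by contractible opens $U_\alpha$; on each $U_\alpha$ the shape is a point, so $\psi^*$ identifies $Sp \simeq Loc_{U_\alpha}(Sp)$ with the constant sheaves on $U_\alpha$, showing that any $Stab(\psi^*)(L)$ restricts to a constant sheaf on each $U_\alpha$ and hence is locally constant; conversely, a locally constant $\mathcal{F}$ trivializes on such a cover and its descent data lands in the full subcategory $Loc_X(Sp) \subset Shv_{Sp}(X)$, so $\mathcal{F}$ lies in the image. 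Either way, the essential-image step deserves a real argument rather than the sketch you give, and this is a gap in the paper as much as in your proposal.
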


\subsection{ Twisted topological K-theory}
Let $ X \in \mathcal{S}$ be a space and let $\alpha \in Loc_{X}(\text{Pic}_{KU})$ be a map $\alpha: X \to \text{Pic}_{KU}$.  We take the composition $\tilde{\alpha} : X \to \text{Pic}_{KU} \to \text{Mod}_{KU}$.  This \emph{twist} classifies a bundle  of invertible $KU$-modules over $X$.   

\begin{defn}
We define the twisted $KU$-homology to be the Thom spectrum 
\[
M\alpha = \text{colim}_{X} \tilde{\alpha }
\] 
and the twisted $KU$-cohomology to be 
\[
KU^{\alpha}(X) \simeq F_{KU}(M\alpha, KU),
\]
the internal function spectrum.  We call this the \emph{twisted K-theory}.  Alternatively, the twisted $KU$-cohomology may be defined as the spectrum of sections of this bundle i.e $\Gamma_{X}(\alpha) := Maps_{Loc_{X}(\text{Mod}_{KU})}(1_{X},  - \tilde{\alpha})$.   
\end{defn}

\begin{rem}
The reader may have noticed a difference in signs between the two definitions.  Indeed anti-equivalent in the sense that we must dualize the twist with respect to a canonical involution on the category $\text{Loc}_{X}(\text{Pic}_{E})$ before taking Thom spectra in order to show that we get the same definition for cohomology.  
 \\
 
The reader should consult \cite[Section 5]{twists} for a more complete description of the theory.   
\end{rem}

\section{Topological $K$-theory of derived Azumaya algebras} 

Having set up the relevant machinery, we restate and prove our main theorem.

\begin{thm} \label{thm:neat}
Let $X$ denote a quasi separated, quasi compact scheme  over the complex numbers.  Let $\alpha \in \pi_{0} \mathbf{Br}(X)$ be a Brauer class, and $\emph{Perf}(X, \alpha) \in \emph{Cat}^{\emph{perf}}(X)$ denote the associated $\emph{Perf(X)}$-linear category.  Then there exists a functorial equivalence
\[
K^{top}_{X}( \emph{Perf}(X, \alpha) ) \simeq \underline{KU^{\widetilde{\alpha}}(X (\C))}.
\]
Here, $ \underline{KU^{\tilde{\alpha}}(X (\C))}$ is the locally constant sheaf associated to a local system of invertible $KU$ modules; this is in turn given by a twist $\widetilde{\alpha}: X(\C) \to \text{Pic}_{KU}$ obtained functorially from $\alpha$.  
\end{thm}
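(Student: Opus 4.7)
The plan is to argue that $K^{top}_{X}(\text{Perf}(X, \alpha))$ is a locally constant sheaf of invertible $KU_{X(\C)}$-modules on $X(\C)$, and then to use Proposition \ref{locallyconstant} to identify it with a $\Pic_{KU}$-valued local system $\widetilde{\alpha}: X(\C) \to \Pic_{KU}$. The twist $\widetilde{\alpha}$ will arise functorially by composing the maps of Picard spaces induced by each symmetric monoidal functor in the construction of $K^{top}_{X}$.

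First I would record the invertibility. By the Corollary following Theorem \ref{gaitsgory}, $\text{Perf}(X, \alpha)$ represents an invertible object of $\Catperf(X)$. By Proposition \ref{nenu}, which crucially uses étale local triviality, its étale sheafified $K$-theory $\underline{K^{\acute{e}t}_{X}(\text{Perf}(X, \alpha))}$ is an invertible $K^{\acute{e}t}_{X}$-module. Since both $\widetilde{An_{X}^{*}}$ and $L_{KU}$ are symmetric monoidal, and since Theorem \ref{bae} identifies $\widetilde{An_{X}^{*}}(K^{\acute{e}t}_{X}) \simeq \underline{ku^{X(\C)}}$, we obtain that $K^{top}_{X}(\text{Perf}(X, \alpha))$ is an invertible $KU_{X(\C)}$-module in $\text{Shv}_{Sp}(X(\C))$.

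Next I would verify local constancy. By Theorem \ref{localtriviality}, there is an étale cover $\{U_{i} \to X\}$ on which the derived Azumaya algebra classified by $\alpha$ becomes Morita trivial, so $\text{Perf}(X,\alpha)|_{U_{i}} \simeq \text{Perf}(U_{i})$ as $\text{Perf}(U_{i})$-linear categories. Étale morphisms induce local analytic isomorphisms on complex points, so the collection $\{U_{i}(\C)\}$ yields an open cover of $X(\C)$. Using the base-change lemma for $K^{\acute{e}t}$ along pullback established after Theorem \ref{bae}, together with compatibility of $\widetilde{An^{*}}$ and $L_{KU}$ with the induced pullbacks, one obtains $K^{top}_{X}(\text{Perf}(X, \alpha))|_{U_{i}(\C)} \simeq K^{top}_{U_{i}}(\text{Perf}(U_{i}))$. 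The Fundamental Theorem of topological $K$-theory, after $KU$-localization, identifies the latter with the constant sheaf $\underline{KU^{U_{i}(\C)}}$. Hence the restriction of $K^{top}_{X}(\text{Perf}(X, \alpha))$ to each element of an open cover of $X(\C)$ is constant, so the sheaf is locally constant.

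Finally, applying Proposition \ref{locallyconstant} with $\mathcal{C} = \text{Mod}_{KU}$ yields that this locally constant invertible sheaf corresponds, via the fully faithful symmetric monoidal embedding $\psi^{*}$, to a map $\widetilde{\alpha}: X(\C) \to \Pic_{KU}$, i.e., to a $\Pic_{KU}$-valued local system on $X(\C)$. By construction, this is precisely the local system $\underline{KU^{\widetilde{\alpha}}(X(\C))}$ of invertible $KU$-modules associated to the twist. Functoriality of $\alpha \mapsto \widetilde{\alpha}$ follows automatically by assembling the chain of maps of Picard spaces
\[
\Pic(\Catperf(X)) \to \Pic(\text{Mod}_{K^{\acute{e}t}_{X}}(\text{Shv}_{Sp}^{\acute{e}t}(X))) \to \Pic(\text{Mod}_{KU_{X}}(\text{Shv}_{Sp}(X(\C)))) \to [X(\C),\Pic_{KU}].
\]
The main obstacle will be the local constancy step: while étale local triviality of Azumaya algebras is a clean algebraic input, carefully propagating it through étale sheafification, $\mathbb{A}^{1}$-localization, and the topological realization functor, and then identifying the trivializations on the resulting analytic cover of $X(\C)$, is where the technical work lies.
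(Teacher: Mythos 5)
Your proposal is correct and follows essentially the same route as the paper: local constancy via étale local triviality of the Azumaya algebra (Proposition \ref{suh1}), invertibility via Proposition \ref{nenu} and symmetric monoidality of the realization and $L_{KU}$ functors (Proposition \ref{suh2}), then Proposition \ref{locallyconstant} to extract the $\Pic_{KU}$-valued local system. One small imprecision to tighten when writing it up: the $U_{i}(\C)$ are not themselves open subsets of $X(\C)$ but only local homeomorphisms onto their images, so one must refine to an open cover of $X(\C)$ on which each $U_{i}(\C) \to X(\C)$ restricts to a homeomorphism, exactly as the paper does.
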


We can rephrase the theorem as saying that there exists a unique lifts making the following diagram of functors commute: 
\[
\xymatrix{  
&& Loc_{\text{Sing}(X(\C))}(Pic_{KU})  \ar@{^{(}->}[d]  \\
& \mathbf{Br}(X)  \ar@{.>}[ur]   \ar[d]^{K^{\acute{e}t}_{X}(-)} \ar@{.>}[r]
& \text{Loc}_{\text{Sing}(X(\C))}(\text{Mod}_{KU}) \ar@{^{(}->} [d]_{\psi^{*}} \\
& \text{Mod}_{K^{\acute{e}t}_{X}}(Shv^{\acute{e}t}_{Sp}(X)) \ar[r]^{\widetilde{An^{*}_{X}}} & \text{Shv}_{Mod_{KU}}(X(\C)) }
\]

The existence of these lifts will follow once we show that $K^{top}_{X}( \text{Perf}(X, \alpha))$ is both a locally constant sheaf, and is invertible as an object in $\text{Shv}_{\text{Mod}_{KU}}(X(\C))$.  To this end we will use proposition \ref{nenu} (and in particular, the \'{e}tale-local triviality of derived Azumaya algebras) in an essential way.  \\

\begin{prop} \label{suh1}
Let $A \in Cat^{\text{perf}}(X)$  be $\emph{Perf}(X)$-linear category over $X$ corresponding to a derived Azumaya algebra.  Then $K_{X}^{top}(A)$ is a locally constant sheaf of $KU$-module spectra on $X(\C)$
\end{prop}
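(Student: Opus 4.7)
The proof rests on the \'{e}tale-local triviality of derived Azumaya algebras. By Theorem \ref{localtriviality}, the derived Azumaya algebra corresponding to $A$ trivializes over some \'{e}tale cover $\{f_i: U_i \to X\}$: each $f_i^{*} A$ is Morita equivalent to $\mathcal{O}_{U_i}$, so that $f_i^{*} A \simeq \mathbb{1}_{U_i}$ in $\Catperf(U_i)$. My plan is to push this trivialization through the three layers of the definition of $K_X^{top}$ and then invoke the definition of locally constant on the analytified cover of $X(\C)$.

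The first technical step is to verify the base-change identity $f_i^{*}\, K_X^{top}(A) \simeq K_{U_i}^{top}(f_i^{*} A)$. At the level of the $\text{Perf}(X)$-linear algebraic $K$-theory presheaf $\underline{K(A)}$, this identity is essentially tautological, since for every $Y' \in \mathrm{Sm}_{/U_i}$ one has $\text{Perf}(Y') \otimes_{\mathcal{O}_X} A \simeq \text{Perf}(Y') \otimes_{\mathcal{O}_{U_i}} f_i^{*} A$. The identity survives \'{e}tale sheafification because sheafification is symmetric monoidal and commutes with $f_i^{*}$, and it survives the realization $\widetilde{An^{*}}$ because the square of sites $\mathrm{Sm}_{/X} \leftarrow \mathrm{Sm}_{/U_i}$, $\mathrm{AnSm}_{/X(\C)} \leftarrow \mathrm{AnSm}_{/U_i(\C)}$ commutes under \'{e}tale pullback; finally the Bott localization $L_{KU}$ commutes with pullback of sheaves of spectra, as it is a smashing localization.

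Combining the base-change identity with the Bott-localized form of Theorem \ref{bae} yields
\[
f_i(\C)^{*}\, K_X^{top}(A) \simeq K_{U_i}^{top}(\mathbb{1}_{U_i}) \simeq \underline{KU^{U_i(\C)}},
\]
which is the constant sheaf of $KU$-modules on $U_i(\C)$, i.e., the pullback of $KU$ along $U_i(\C) \to \mathrm{pt}$. Because each $f_i$ is \'{e}tale, the analytification $f_i(\C): U_i(\C) \to X(\C)$ is a local homeomorphism and the family $\{f_i(\C)\}$ is jointly surjective, so the $U_i(\C)$ cover the terminal object of $Shv(X(\C))$ in the sense of Definition \ref{bimbo}. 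Pullback to each member of this cover turns $K_X^{top}(A)$ into a constant sheaf, which by the definition reviewed in Section \ref{sheaf} is exactly the statement of local constancy.

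The main obstacle I anticipate is not any individual step but the careful bookkeeping in the base-change identity, especially ensuring that each of the three compatibilities holds without smoothness hypotheses on $X$. A conceptually cleaner alternative is offered by Proposition \ref{nenu}: since $A$ represents a derived Brauer class, $\underline{K^{\acute{e}t}_X(A)}$ is automatically invertible as a $K^{\acute{e}t}_X$-module sheaf, hence so is $K_X^{top}(A)$ as a $\underline{KU^{X(\C)}}$-module sheaf. An invertible module over a sheaf of ring spectra becomes free of rank one on any cover that trivializes its Picard class; using the \'{e}tale trivialization of $A$ itself then directly produces the local-constancy cover of $X(\C)$, bypassing a full base-change argument.
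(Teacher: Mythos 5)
Your proposal takes essentially the same route as the paper's proof. Both arguments use the \'{e}tale local triviality of the Azumaya algebra (Theorem \ref{localtriviality}/Proposition \ref{nenu}) to find an \'{e}tale map over which the K-theory sheaf becomes trivial, pass through the analytification-of-\'{e}tale-is-a-local-homeomorphism observation to produce an open cover of $X(\C)$, and then invoke the base-change identity together with Theorem \ref{bae} to identify the restriction of $K^{top}_X(A)$ with the constant $KU$-module sheaf. Your second paragraph usefully makes explicit the base-change compatibility
\[
f_i^{*}\, K_X^{top}(A) \simeq K_{U_i}^{top}(f_i^{*} A),
\]
which the paper asserts as the first line of its chain of equivalences without unpacking; your checklist (sheafification commutes with pullback, the square of sites commutes, $L_{KU}$ is a smashing localization) is exactly the bookkeeping that proof quietly relies on. One caution about your proposed ``conceptually cleaner alternative'': invertibility of $K^{top}_X(A)$ as a $\underline{KU}$-module sheaf (which is Proposition \ref{suh2}, proven separately) does not by itself yield local constancy --- there is no reason, a priori, that every invertible object of $Shv_{Mod_{KU}}(X(\C))$ is locally constant --- so you would still need to produce a trivializing open cover, and for that the \'{e}tale trivialization plus base-change is precisely the mechanism you were hoping to bypass. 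Thus the ``alternative'' collapses back into the main argument rather than shortcutting it.
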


\begin{proof}
 Let  $x \in X(\C)$ be a  point.  We will show that there exists some open neigborhood  $x \in V$ for which the restriction $ K_{X}^{top}(A)|_{V}$ is equivalent to $\underline{KU^{V}}$ in $Shv_{Sp}(V)$.  The result will follow since $\underline{KU^{V}}$ is the sheafification of the constant sheaf on $V$ sending all open sets to $KU$, and is hence locally constant.

By proposition \ref{nenu}, if $A$ is a derived Azumaya algebra over $X$, representing a Brauer class $\alpha \in \pi_{0}\mathbf{Br}_{0}(X)$, its associated \'{e}tale $K$-theory theory sheaf of spectra is \'{e}tale locally equivalent to $K^{\acute{e}t}_{X}$ (the \'{e}tale $K$-theory sheaf of the base). This means that, for any closed point $x \in X$, there exists an \'{e}tale map  $\phi :Spec(S) \to X$ with image containing $x$ for which  
\[
\phi^{*}(\underline{K^{\acute{e}t} (A)})  \simeq K^{\acute{e}t}|_{Spec(S)}.
\]
Taking complex points, we obtain a map of spaces $ \widetilde{\phi}: U := Spec(S)(\C) \to X(\C)$.  Since $\widetilde{\phi} : U \to X(\C)$ is the realization of an \'{e}tale morphism it is a local homeomorphism and therefore there exists a cover $\{U_{i}\}_{i \in I}$ of $U$ such that each $U_{i}$ is mapped homeomorphically onto its image.  Choose some $U_{i}$ with $x \in \widetilde{\phi}(U_{i})$.  We now have the following chain of equivalences in $Shv_{Sp}(U_i)$:

\begin{align*}
K^{top}_{X}(A)|_{U_i} & \simeq K^{top}_{Spec(S)}(A \otimes_{\mathcal{O}_X} Mod_{S})|_{U_i} \\
& \simeq K^{top}_{Spec(S)}(\mathbb{1})|_{U_i} \\
& \simeq \underline{KU^{U}}|_{U_i} \\
& \simeq \underline{KU^{U_i}}, 
\end{align*}
where the second equivalence follows from theorem \ref{localtriviality} and the third and fourth follow from  \ref{bae}.  We have displayed, for $x \in X$, an open set $V:=U_{i}$ over which the restriction $K^{top}_{X}(A)|_{V}$ is equivalent to the constant sheaf associated to $KU$.  Hence, $   K^{top}_{X}(A)$ is itself locally constant.  
\end{proof}

\begin{rem}
Together with Proposition \ref{locallyconstant}, this allows us to identify $K^{top}_{X}(A)$ with its associated local system in $Loc_{\text{Sing}(X(\C))}(Mod_{KU})$.

\end{rem}

\begin{prop} \label{suh2}
Let $\emph{Perf}(X, \alpha)$ be as above.  Then $K^{top}_{X}(\emph{Perf}(X, \alpha))$ is an invertible object of  $Loc_{Sing(X(\C))}(\emph{Mod}_{KU})$, that is  
\[
K^{top}_{X}(\emph{Perf}(X, \alpha)) \in \emph{Pic}[\emph{Loc}_{Sing(X(\C))}(\emph{Mod}_{KU})].
\]
\end{prop}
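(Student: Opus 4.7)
The plan is to show invertibility by tracing $\text{Perf}(X,\alpha)$ through the defining composition of $K^{top}_{X}$ and exploiting the symmetric monoidal properties of each piece, then transferring the result from $\text{Shv}_{\text{Mod}_{KU}}(X(\C))$ down to $\text{Loc}_{\text{Sing}(X(\C))}(\text{Mod}_{KU})$ via Proposition \ref{locallyconstant}.

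First, recall that the corollary to Theorem \ref{gaitsgory} identifies $\alpha \in \pi_{0}\mathbf{Br}(X)$ with an invertible object $\text{Perf}(X,\alpha) \in \text{Pic}(\Catperf(X))$; let $\text{Perf}(X,\alpha)^{-1}$ denote its tensor inverse (itself the category of modules over the opposite derived Azumaya algebra). Proposition \ref{nenu} then guarantees that $\underline{K^{\acute{e}t}_{X}(\text{Perf}(X,\alpha))}$ is invertible in $\text{Mod}_{K^{\acute{e}t}_{X}}(\text{Shv}^{\acute{e}t}_{Sp}(X))$, with inverse $\underline{K^{\acute{e}t}_{X}(\text{Perf}(X,\alpha)^{-1})}$. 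Next, the functors $\widetilde{An^{*}_{X}}$ and $L_{KU}$ are both (strongly) symmetric monoidal left adjoints, hence they preserve invertible objects and their inverses. Composing, we conclude that $K^{top}_{X}(\text{Perf}(X,\alpha))$ is invertible in $\text{Shv}_{\text{Mod}_{KU}}(X(\C))$, with inverse $K^{top}_{X}(\text{Perf}(X,\alpha)^{-1})$.

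Next I invoke Proposition \ref{suh1}, applied both to $\text{Perf}(X,\alpha)$ and to $\text{Perf}(X,\alpha)^{-1}$ (which corresponds to another derived Azumaya algebra), to conclude that both $K^{top}_{X}(\text{Perf}(X,\alpha))$ and its inverse are locally constant sheaves of $KU$-module spectra. By Proposition \ref{locallyconstant} (with $\text{Mod}_{KU}$ in place of $Sp$, as permitted by the remark there), the fully faithful symmetric monoidal embedding
\[
\psi^{*}: \text{Loc}_{\text{Sing}(X(\C))}(\text{Mod}_{KU}) \hookrightarrow \text{Shv}_{\text{Mod}_{KU}}(X(\C))
\]
has essential image precisely the locally constant sheaves. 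Hence $K^{top}_{X}(\text{Perf}(X,\alpha))$ and its inverse each lift uniquely (up to equivalence) to local systems $\widetilde{\alpha}$ and $\widetilde{\alpha}^{-1}$ in $\text{Loc}_{\text{Sing}(X(\C))}(\text{Mod}_{KU})$.

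Finally, since $\psi^{*}$ is symmetric monoidal, the equivalence $\psi^{*}(\widetilde{\alpha} \otimes \widetilde{\alpha}^{-1}) \simeq \psi^{*}(\widetilde{\alpha}) \otimes \psi^{*}(\widetilde{\alpha}^{-1}) \simeq K^{top}_{X}(\mathbb{1}) \simeq \psi^{*}(\mathbb{1}_{\text{Loc}})$ together with full faithfulness forces $\widetilde{\alpha} \otimes \widetilde{\alpha}^{-1} \simeq \mathbb{1}_{\text{Loc}}$, proving that the lift of $K^{top}_{X}(\text{Perf}(X,\alpha))$ is invertible in the local system category, i.e.\ lives in $\text{Pic}[\text{Loc}_{\text{Sing}(X(\C))}(\text{Mod}_{KU})] \simeq \text{Loc}_{\text{Sing}(X(\C))}(\text{Pic}_{KU})$. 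The only substantive input is Proposition \ref{nenu}, which is where the \'etale-local triviality of derived Azumaya algebras is used to promote the \emph{a priori} merely lax symmetric monoidal $K^{\acute{e}t}_{X}$ to something that preserves invertibility; everything else is bookkeeping with symmetric monoidal functors between presentable $\infty$-categories.
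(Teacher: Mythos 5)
Your proof follows the same route as the paper's: Proposition \ref{nenu} gives invertibility of $\underline{K^{\acute{e}t}_X(\text{Perf}(X,\alpha))}$ as a $K^{\acute{e}t}_X$-module, and the (strong) symmetric monoidality of $\widetilde{An^{*}_X}$ and $L_{KU}$ transports this to invertibility in $\text{Shv}_{\text{Mod}_{KU}}(X(\C))$. The one place you go beyond the paper's written proof is in explicitly transferring invertibility from $\text{Shv}_{\text{Mod}_{KU}}(X(\C))$ to $\text{Loc}_{\text{Sing}(X(\C))}(\text{Mod}_{KU})$: the paper stops its proof at the sheaf category and handles the passage to local systems only implicitly via the surrounding remarks, whereas you apply Proposition \ref{suh1} to \emph{both} $\text{Perf}(X,\alpha)$ and its inverse (the opposite Azumaya algebra's module category) to see that both $K^{top}_X(\text{Perf}(X,\alpha))$ and its tensor inverse land in the essential image of the symmetric monoidal fully faithful $\psi^{*}$, and then use full faithfulness to reflect the tensor-inverse relation down to local systems. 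This is a genuine (if small) point of care --- full faithfulness alone does not guarantee that the inverse lies in the essential image --- and your argument closes the gap cleanly. So: correct, same overall strategy, but slightly more complete in the bookkeeping between the two ambient categories.
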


\begin{proof}
As we showed in proposition \ref{nenu}, the associated \'{e}tale K-theory sheaf $K^{\acute{e}t}_{X}(\text{Perf}(X, \alpha)  $ on $\text{Shv}^{\acute{e}t}(X)$ is invertible as an object of $Mod_{K^{\acute{e}t}_{X}}(Shv^{\acute{e}t}_{Sp}(X))$.  Since $\mathbb{A}^{1}$-localization and the topological realization functor $An^{*}$ are symmetric monodal by \cite{ayoub}, it follows that $\widetilde{An^{*}}(K^{\acute{e}t}(A))$ is invertible as well.  Finally, the $KU$-localization functor
\[
L_{KU}: Shv_{\text{Mod}_{ku}}(X(\C)) \to  Shv_{\text{Mod}_{KU}}(X(\C))
\]
is itself symmetric monoidal; combining all this, we conclude that 

\[
K^{top}_{X}(\text{Perf}(X, \alpha)) = L_{KU}\widetilde{An^{*}_{X}}(K^{\acute{e}t}_{X}(\text{Perf}(X, \alpha)
\]
is an invertible object in $Shv_{\text{Mod}_{KU}}(X(\C))$. 
\end{proof}

\begin{rem}
The above proposition allows us to think of $K^{top}_{X}( \text{Perf(X,} \alpha)$ as an object in the Picard $\infty$-groupoid $\text{Pic}(Loc_{X(\C)}( Mod_{KU}))$.
We identify 
\[
\text{Pic}(Loc_{X(\C)}(Mod_{KU}) )\simeq  Loc_{X(\C)}(\text{Pic}_{KU})
\]
where $\text{Pic}_{KU}$ denotes the Picard space of $Mod_{KU}$.  This follows from the pointwise symmetric monoidal structure on $Loc_{Sing(X(\C))}(\text{Mod}_{KU})$: a local system $\alpha: \text{Sing}(X(\C)) \to \Mod_{KU}$ will be invertible  if and only if each simplex is sent to an invertible $KU$ module.  

\end{rem}

\begin{proof} [Proof of Theorem \ref{thm:neat}]
Let $\text{Perf}(X,\alpha)$ be the $\text{Perf}(X)$ linear category of modules over the derived Azumaya algebra associated to $\alpha \in Br(X)$. By proposition \ref{suh1}, $K^{top}_{X}(\Perf(X, \alpha))$ is locally constant; hence we may look at its image in $Loc_{\text{Sing(X)}}(\text{Mod}_{KU})$.  By proposition \ref{suh2}, $K^{top}_{X}(Perf(X, \alpha))$ is invertible as an object in the symmetric monoidal $\infty$-category  $Loc_{\text{Sing}(X(\C))}(\text{Mod}_{KU})$.  By the remarks above, we may therefore represent $K^{top}_{X}(\Perf(X, \alpha))$ by a local system $\widetilde{\alpha}: Sing(X(\C)) \to \text{Pic}_{KU} \to \text{Mod}_{KU}$.   This gives precisely the desired twist of $K$-theory.  
\end{proof}
  
\subsection{Cohomological Brauer classes} 
We now restrict to the setting where our chosen Brauer class $\alpha$ lives in $H_{\acute{e}t}^2(X, \mathbb{G}_{m})$. We show that in this case, the corresponding local system $\alpha: Sing(X(\C)) \to \text{Pic}_{KU}$ factors through the map $K( \mathbb{Z}, 3) \to Pic_{KU}$.  Hence, the twist obtained in theorem \ref{thm:neat} arises from a class  $\tilde{\alpha} \in H^{3}(X,\mathbb{Z})$.  Hence, our first order of business is to study the homotopy of the space $\text{Pic}_{KU}$. It is straightforward to see that
\[
\Omega(Pic_{KU}) \simeq GL_{1}(KU);
\]
In other words, $\text{Pic}_{KU}$ is a delooping of the space of units of $KU$.  Indeed, for a general symmetric monoidal $\infty$-category $\mathcal{C}$ with unit $\mathbf{1} \in \mathcal{C}$ and any invertible object $X \in \mathcal{C}$ , we have the following equivalences :
\[
Map_{\mathcal{C}}(X,X)
 \simeq Map_{\mathcal{C}}( \mathbf{1} \otimes X, X) \simeq Map_{\mathcal{C}}(\mathbf{1}, X^{-1} \otimes X) \simeq Map_{\mathcal{C}}( \mathbf{1}, \mathbf{1})
\]
where the second equivalence follows from the fact that we can view tensoring with $X$ as an left adjoint functor (because it is invertible, hence dualizable). Its right adjoint is none other than tensoring with its dual, $X^{-1}$.  The final equivalence holds because $X^{-1} \otimes X \simeq \mathbf{1}$.   Now, recall that when forming the $\infty$-groupoid $Pic(\mathcal{C})$, we restrict to the subcategory of equivalences.  This has the effect of restricting the endomorphism mapping spaces for each $X \in Pic(\mathcal{C})$ to the space $Aut(\mathbf{1})$ with path components corresponding to $\pi_{0}(End_{\mathcal{C}}(\mathbf{1}))^{\times} \subseteq \pi_{0}(End_{\mathcal{C}}(\mathbf{1}))$.
In this particular case, where $\mathcal{C} = Mod_{KU}$, this means that
\begin{align*}
& \pi_{1}(Pic_{KU})= \pi_{0}(GL_{1}(KU)) = \pi_{0}(\Omega^{\infty}(R))^{\times}; \\
& \pi_{n}(Pic_{KU}) \simeq \pi_{n-1}(GL_{1}(KU)) \simeq \pi_{n-1}(\Omega^{\infty}(KU)).
\end{align*}

Recall from \cite{snaith} that there is a decomposition  of infinite loop spaces: 
\[
GL_{1}(KU) \simeq K( \mathbb{Z}/2, 0) \times K( \mathbb{Z}, 2) \times BSU_{\otimes} 
\]
and therefore
\[
BGL_{1}(KU) \simeq K(\mathbb{Z}/2,1) \times K(\mathbb{Z},3) \times BBSU_{\otimes}
\]

For the remainder of this section, we work in the $\infty$-topos, $Shv^{et}(\C)$, of \'{e}tale sheaves over $Spec(\C)$.  Earlier we showed that, to a given Azumaya algebra $A$, the associated \'{e}tale $K$ theory sheaf is invertible as a module over \'{e}tale $K$-theory over the base.  This implies that there exists a map of sheaves $\mathbf{Br} \to  \widetilde{Pic}_{K^{et}}$ in $\text{Shv}^{\acute{e}t}(X)$ where we think of $\widetilde{Pic}_{K^{et}}$ as the stack of invertible modules over \'{e}tale K-theory.  In particular, the space of sections over $Spec(R) \to Spec(\C)$ is just the Picard space of the category of $K^{et}|_{R}$ modules in the \'{e}tale $\infty$-topos of $R$.  

As discussed above, the functor of taking complex points induces a morphism of topoi 
\[
|| - ||: Shv^{\acute{e}t}(\C) \to \mathcal{S}.
\]
We claim that 
\[
||\widetilde{\text{Pic}_{K^{\acute{e}t}}} || \simeq \text{Pic}_{KU}.
\]
Due to the decomposition  of spaces (for a general symmetric $\infty$-category $\mathcal{C}$),  
\[
Pic(\mathcal{C}) \simeq \pi_{0}(Pic( \mathcal{C})) \times B Aut( 1),
\]
it is enough to prove the following:

\begin{prop} \label{bgl}
There is an equivalence  of spaces $||BGL_{1}(K^{et})|| \simeq BGL_{1}(KU)$.
\end{prop}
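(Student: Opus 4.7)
The plan is to reduce the statement to Blanc's theorem $\|K^{\acute{e}t}_{\C}\|_{\mathbb{S}} \simeq ku$ via a functorial pullback presentation of the space of units of an $E_{\infty}$-ring. For any such ring $R$, the unit space $GL_{1}(R)$ is the pullback $\Omega^{\infty}R \times_{\pi_{0}(R)} \pi_{0}(R)^{\times}$, and this construction depends only on the connective cover $\tau_{\geq 0}R$; in particular $GL_{1}(ku) \simeq GL_{1}(KU)$. Applied sectionwise to the \'etale sheaf of $E_{\infty}$-rings $K^{\acute{e}t}$, this expresses $GL_{1}(K^{\acute{e}t})$ as a pullback in $Shv^{\acute{e}t}(\C)$ of $\Omega^{\infty}K^{\acute{e}t} \to \pi_{0}K^{\acute{e}t} \leftarrow \pi_{0}(K^{\acute{e}t})^{\times}$.

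I would then apply the topological realization $\|-\|: Shv^{\acute{e}t}(\C) \to \mathcal{S}$ to this diagram. The key technical input is that $\|-\|$ preserves the pullback: after $\mathbb{A}^{1}$-localization, the space-level realization is the inverse image functor of a geometric morphism to the analytic topos (this is essentially the content of Theorem \ref{ayoub}), and hence preserves finite limits; the sheaves in play are $\mathbb{A}^{1}$-invariant after \'etale sheafification, so this applies. The constant sheaves $\pi_{0}K^{\acute{e}t} = \mathbb{Z}$ and $\pi_{0}(K^{\acute{e}t})^{\times} = \mathbb{Z}^{\times}$ realize to themselves. For the main term, I use that Blanc's spectral realization $\|-\|_{\mathbb{S}}$ is defined as the stabilization of $\|-\|$, so on sheaves of connective spectra the space-level realization of $\Omega^{\infty}K^{\acute{e}t}$ recovers $\Omega^{\infty}$ of the spectral realization, which is $\Omega^{\infty}ku$ by \cite[Theorem 4.5]{blanc}. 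Combining yields
\[
\|GL_{1}(K^{\acute{e}t})\| \simeq \Omega^{\infty}ku \times_{\mathbb{Z}} \mathbb{Z}^{\times} \simeq GL_{1}(ku) \simeq GL_{1}(KU).
\]
Delooping both sides (the bar construction is a colimit, preserved by $\|-\|$) then yields the proposition.

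The main obstacle is the second step: justifying that the space-level realization $\|-\|$ commutes with $\Omega^{\infty}$ applied to $K^{\acute{e}t}$. Blanc's theorem is stated spectrally, and the adjoint directions of $\|-\|$ (a left adjoint) and $\Omega^{\infty}$ (a right adjoint) do not match, so this commutation is not formal. The workaround is to exploit connectivity: $K^{\acute{e}t}$, or equivalently its connective cover (sufficient for $GL_{1}$), is a sheaf of connective spectra, and on connective spectra $\Omega^{\infty}$ is an equivalence onto group-complete $E_{\infty}$-spaces. The required commutation thus reduces to compatibility of $\|-\|$ with group completion of $E_{\infty}$-monoids, which is built into Blanc's construction of the semi-topological $K$-theory spectrum $K^{st}$ as the stabilization of a space-valued realization.
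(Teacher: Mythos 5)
Your strategy matches the paper's proof step for step: both present $GL_{1}$ as the pullback of $\Omega^{\infty}$ over $\pi_{0}$ along the inclusion of units, appeal to left-exactness of the realization functor $\| - \|$ to preserve this pullback, identify each corner (the $\pi_{0}$-sheaves are constant and realize to $\mathbb{Z}$ and $\mathbb{Z}^{\times}$; $\Omega^{\infty}K^{\acute{e}t}$ realizes to $\Omega^{\infty}ku$ via compatibility of realization with group completion), and then deloop. The paper carries out the middle identification concretely by writing $\Omega^{\infty}K^{\acute{e}t} \simeq (\bigsqcup_{n} BGL_{n})^{gp}$ and using that $\| - \|$ commutes with coproducts and with group completion of $\mathbb{E}_{\infty}$-monoids (Blanc's $\Gamma$-space argument), which is exactly the ``workaround'' you describe in your final paragraph, just phrased more abstractly; this part of your proposal is correct and essentially identical to the paper.

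The one genuine divergence is your justification of the left-exactness of $\| - \|$, and it has a gap. You route through $\mathbb{A}^{1}$-localization and Ayoub's theorem, and then assert that ``the sheaves in play are $\mathbb{A}^{1}$-invariant after \'{e}tale sheafification, so this applies.'' That assertion is not obviously true: while nonconnective $K$-theory is $\mathbb{A}^{1}$-invariant on $\text{Sm}_{/\C}$ (Quillen's theorem, since smooth schemes are regular), \'{e}tale sheafification is a limit construction and has no reason to preserve $\mathbb{A}^{1}$-invariance --- indeed the mismatch between \'{e}tale descent and $\mathbb{A}^{1}$-invariance for $K$-theory is precisely why one normally passes to Bott-inverted $K$-theory in this context, which you have not done at this stage. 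The paper sidesteps this entirely: the complex-points functor is a left-exact map of sites, so $\| - \| \colon Shv^{\acute{e}t}(\C) \to \mathcal{S}$ is directly the inverse-image functor of a geometric morphism (a point of the \'{e}tale topos) and hence preserves finite limits, with no $\mathbb{A}^{1}$-localization and no $\mathbb{A}^{1}$-invariance hypothesis needed. You should replace your $\mathbb{A}^{1}$-invariance claim with this direct geometric-morphism argument; as written, your justification of the key finite-limit preservation rests on an unproved and doubtful assertion.
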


\begin{proof}
It is enough to show that $||GL_{1}(K^{et})|| \simeq GL_{1}(ku)$.  We have a following homotopy pullback square in $Shv^{\acute{e}t}_{\C}$ defining the sheaf of units $GL_{1}(K^{\acute{e}t})$:
\[
\begin{CD}
GL_{1}(K^{\acute{e}t}) @>>> \Omega^{\infty}(K^{\acute{e}t}) \\
@VVV @VVV \\
\tau_{0}(\Omega^{\infty}(K^{\acute{e}t}))^{\times}  @>>> \tau_{0}(\Omega^{\infty}(K^{\acute{e}t}) )
\end{CD}
\]

\noindent where the bottom horizontal map is inclusion of  the grouplike components into $ \tau_{0}(\Omega^{\infty}(K^{\acute{e}t}))$.  We will identify the image, under the realization functor, $|| - ||$ of this pullback square in $\mathcal{S}$ with the one defining $GL_{1}(ku)$.  The proof will then follow since $|| - ||$ is a left adjoint to a geometric morphism of $\infty$-topoi and therefore preserves finite limits.  \\

\noindent We first show that $||\Omega^{\infty}(K^{\acute{e}t})|| \simeq \Omega^{\infty}(ku)$. Indeed 
\begin{align*}
||\Omega^{\infty}(K^{\acute{e}t})||  &\simeq || (\bigsqcup_{n\geq 0} BGL_n )^{gp}||  \\
&\simeq (\bigsqcup_{n \geq 0} ||BGL_{n}||^{gp}) \\
&\simeq ( \bigsqcup_{n \geq 0} BGL_{n}(\C))^{gp}  \\
& \simeq BU \times \mathbb{Z}.
\end{align*}
The second equivalence follows from \cite{blanc} where it is shown the the topological realization functor commutes with group completion of an $\mathbb{E}_{\infty}$ space.  (Blanc works in the context of $\Gamma$-spaces to show this.)  The third equivalence follows from the fact that  topological realization is a left adjoint and therefore commutes with coproducts.  
Next up, we show that $\Omega^{\infty}(K^{et}) \to  \tau_{0}(\Omega^{\infty}(K^{et}))$ corresponds, upon applying $|| - ||$, with the zero truncation map $\Omega^{\infty}(ku) \to \tau_{0}(\Omega^{\infty}(ku))$.  This is a consequence of \cite[Proposition 5.5.6.28]{lurie2009} where it is shown that for a colimit preserving, left exact functor $F: A \to B$ between presentable $\infty$-categories that is a left adjoint, there is a natural equivalence $ F \circ \tau_{0} \simeq \tau_{0} \circ F$.  Hence, upon applying topological realization to the right vertical arrow, we obtain the truncation map $ \Omega^{\infty}(ku) \to \tau_{0}(\Omega^{\infty}(ku))$ as desired.  \\

Finally, we show that 
\[ 
||   \tau_{0}(\Omega^{\infty}(K^{\acute{e}t})^{\times}|| \to ||\tau_{0}(\Omega^{\infty}(K^{\acute{e}t})||
\]
is the map 
\[
\tau_{0}(\Omega^{\infty}(ku))^{\times} \to  \tau_{0}(\Omega^{\infty}(ku)).
\]
For this we recall the fact that $\tau_{0}(K^{\acute{e}t})$ is the constant sheaf $\underline{\mathbb{Z}}$ and hence $\Omega^{\infty}(\tau_{0}(K^{\acute{e}t}))^{\times}$ is $\underline{\mathbb{Z}/2}$.  Let $ \pi^{*} : Shv^{\acute{e}t}(\C) \to \mathcal{S}$ be the left adjoint to the terminal geometric morphism, sending a space to its associated constant sheaf.  If we take $ || - || \circ \pi^{*}$, this is a left adjoint to geometric morphism $  \mathcal{S} \to \mathcal{S}$; of course there is only one such morphism and it will be an equivalence.  Hence, $|| \pi^{*}(A)|| \simeq A$.  Putting this all together, we conclude that $||  \tau_{0}(\Omega^{\infty}(K^{\acute{e}t})||^{\times} \to  || \tau_{0}(\Omega^{\infty}(K^{\acute{e}t})||$ is equivalent to the inclusion  $ \mathbb{Z}/ 2 \to \mathbb{Z}$ of multiplicative monoids.  
\end{proof}

\begin{cor} \label{cohomology}
Let $X$ be a quasi-compact, quasi separated scheme over $Spec(\C)$ and let $\alpha$ be a class in $H^{2}_{\acute{e}t}(X, \mathbb{G}_{m})$.   Then the assignment 
\[
\alpha \mapsto \walpha
\]
of theorem \ref{thm:neat} is given precisely by the map on cohomology 
\[
H^{2}_{\acute{e}t}(X, \mathbb{G}_{m}) \to H^{3}(X(\C),\mathbb{Z}).
\]
induced by the realization functor $ || - || : Shv_{\C}^{\acute{e}t} \to   \mathcal{S}$.

\end{cor}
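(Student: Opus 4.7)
The plan is to trace the construction of the twist $\widetilde{\alpha}: X(\C) \to \text{Pic}_{KU}$ produced by Theorem \ref{thm:neat} when $\alpha$ lies in the $H^2_{\acute{e}t}(X, \mathbb{G}_m)$ summand of $\pi_0 \mathbf{Br}(X)$, and show it factors through the $K(\mathbb{Z},3)$ summand appearing in Snaith's splitting of $BGL_1(KU)$. Recall from Section 5 the split fiber sequence
\[
\mathbf{B}^2 \mathbf{GL}_1 \to \mathbf{Br} \to \mathbf{B}\mathbb{Z}
\]
of \'{e}tale sheaves, giving $\pi_0\mathbf{Br}(X) \simeq H^2_{\acute{e}t}(X, \mathbb{G}_m) \times H^1_{\acute{e}t}(X, \mathbb{Z})$. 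A cohomological $\alpha$ is represented by a map $\alpha: X \to \mathbf{B}^2 \mathbf{GL}_1 \to \mathbf{Br}$ in $Shv_{\C}^{\acute{e}t}$, and chasing the proof of Theorem \ref{thm:neat}, $\widetilde{\alpha}$ is the composition
\[
X(\C) \xrightarrow{||\alpha||} ||\mathbf{Br}|| \to ||\widetilde{\text{Pic}_{K^{\acute{e}t}}}|| \xrightarrow{\sim} \text{Pic}_{KU},
\]
where the final equivalence is provided by Proposition \ref{bgl}.

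The first step is to show that this composition factors through the $K(\mathbb{Z},3)$ factor of $BGL_1(KU) \hookrightarrow \text{Pic}_{KU}$. Since $||-||$ is part of a geometric morphism and is itself a left adjoint, it preserves both finite limits and all small colimits, and therefore commutes with the bar construction. Thus $||\mathbf{B}^2 \mathbf{GL}_1|| \simeq B^2 ||\mathbf{GL}_1|| \simeq B^2(\mathbb{C}^{\times}) \simeq B^2 S^1 \simeq K(\mathbb{Z},3)$, using $\mathbb{C}^{\times} \simeq S^1$ up to homotopy. On the other hand, the unit map $\mathbf{GL}_1 \to GL_1(K^{\acute{e}t})$ realizes, by the pullback-square argument of Proposition \ref{bgl}, to the canonical map of $\mathbb{E}_{\infty}$-spaces $\mathbb{C}^{\times} \to GL_1(KU)$ given by the inclusion of units. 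Delooping this map twice yields precisely the Snaith summand inclusion $K(\mathbb{Z},3) \hookrightarrow BGL_1(KU) \hookrightarrow \text{Pic}_{KU}$. Assembling these identifications gives a factorization
\[
X(\C) \xrightarrow{||\alpha||} K(\mathbb{Z},3) \hookrightarrow BGL_1(KU) \hookrightarrow \text{Pic}_{KU}.
\]

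The second step is then formal: the map $||\alpha||: X(\C) \to K(\mathbb{Z},3)$ is, by construction, the image of $\alpha \in [X, \mathbf{B}^2 \mathbf{GL}_1] = H^2_{\acute{e}t}(X, \mathbb{G}_m)$ under the topological realization $||-||$, landing in $[X(\C), K(\mathbb{Z},3)] = H^3(X(\C), \mathbb{Z})$. This is exactly the map described in the statement.

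The main obstacle is the identification, in step one, of the realized unit map $||\mathbf{GL}_1|| \to ||GL_1(K^{\acute{e}t})||$ with the canonical $\mathbb{C}^{\times} \hookrightarrow GL_1(KU)$ corresponding to Snaith's $K(\mathbb{Z},2)$ summand. This should follow by naturality from the pullback-square analysis in the proof of Proposition \ref{bgl}, once one verifies that both sides are compatible with the line-bundle classifying map $B\mathbb{G}_m \to BU \times \mathbb{Z}$ under the relevant splittings; the bookkeeping is delicate but essentially formal.
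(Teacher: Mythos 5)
Your proof follows the same route as the paper's: represent $\alpha$ by $X\to B^{2}\mathbb{G}_{m}\to\mathbf{Br}$, compose with the $K$-theory map $\mathbf{Br}\to\widetilde{\text{Pic}_{K^{\acute{e}t}}}$, apply $\|-\|$, and invoke Proposition \ref{bgl} to obtain the factorization through $K(\mathbb{Z},3)\hookrightarrow\text{Pic}_{KU}$. You helpfully spell out the step that the paper compresses into a single sentence, namely that the realized map $\|B^{2}\mathbb{G}_{m}\|\to\|BGL_{1}(K^{\acute{e}t})\|\simeq BGL_{1}(KU)$ is the \emph{canonical} inclusion of the Snaith $K(\mathbb{Z},3)$ summand rather than some other map.

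One caveat: the delooping bookkeeping in your middle paragraph is off by one. The map whose realization you need to identify is a single delooping of $\mathbf{BGL}_{1}\simeq\Omega\mathbf{B}^{2}\mathbf{GL}_{1}\to\Omega\widetilde{\text{Pic}_{K^{\acute{e}t}}}\simeq GL_{1}(K^{\acute{e}t})$ (i.e.\ the map classifying an invertible line bundle as a unit in $K$-theory), which realizes to $BU(1)\simeq K(\mathbb{Z},2)\to GL_{1}(KU)$. Delooping once yields $K(\mathbb{Z},3)\to BGL_{1}(KU)$. You instead wrote a map $\mathbf{GL}_{1}\to GL_{1}(K^{\acute{e}t})$ realizing to $\mathbb{C}^{\times}\to GL_{1}(KU)$ and delooped twice, which would land in $B^{2}GL_{1}(KU)$ rather than $BGL_{1}(KU)$; also note $[\mathbb{C}^{\times},GL_{1}(KU)]$ contains no interesting maps since $\pi_{1}GL_{1}(KU)=0$. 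The underlying idea — that the identification with the Snaith summand follows from the pullback-square analysis in Proposition \ref{bgl} — is sound; just correct the source to $\mathbf{BGL}_{1}$ (equivalently $\mathbf{Pic}\simeq\Omega\mathbf{Br}$ restricted to its connected component) and deloop once.
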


\begin{proof}
By \cite{antieaugepner} there is a natural map $B^{2}\mathbb{G}_{m} \to \mathbf{Br}$.  By the discussion preceding \ref{bgl}, taking \'{e}tale $K$-theory induces the following map in $Shv_{\C}^{\acute{e}t}$.   
\[
\mathbf{Br} \to \widetilde{Pic_{K^{\acute{e}t}}}
\]
We apply the realization functor $|| - ||$ to the composition 
\[
X \xrightarrow{\alpha} B^{2}\mathbb{G}_{m} \to \mathbf{Br} \to \widetilde{Pic_{K^{\acute{e}t}}}
\] 

\noindent and obtain, by proposition \ref{bgl} the factorization of $\alpha$
\[
\text{Sing}(X(\C)) \to  K(\mathbb{Z},3) \to Pic_{KU}
\]
where the map $K(\mathbb{Z},3) \to Pic_{KU}$ is the canonical inclusion.  
\end{proof}

\subsection{}

We now restate and prove our computation of the ``absolute" $K$-theory  of Azumaya algebras.   

\begin{thm} \label{idk}
Let $X$ be a separated $\C$-scheme of finite type and let  $\alpha \in H^2_{\acute{e}t}(X, \mathbb{G}_{m})$ be a torsion class in \'{e}tale cohomology corresponding to an ordinary (non-derived) Azumaya algebra over $X$.  Then 
\[
K^{top}( \emph{Perf}(X, \alpha)) \simeq KU^{\walpha}(X(\C))
\]
where $\alpha \mapsto \walpha \in H^{3}(X, \mathbb{Z})$ is the associated cohomological twist of corollary \ref{cohomology}.    
\end{thm}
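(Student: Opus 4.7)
The plan is to deduce this statement from the main theorem \ref{thm:neat} by taking global sections on $X(\C)$ and then invoking Corollary \ref{cohomology} to rewrite the resulting twist cohomologically. First I would apply Theorem \ref{thm:neat} to the Brauer class $\alpha$ to obtain the equivalence of sheaves of $KU$-modules $K^{top}_X(\text{Perf}(X,\alpha)) \simeq \underline{KU^{\widetilde{\alpha}}(X(\C))}$ on $X(\C)$, where $\widetilde{\alpha}$ is classified by a map $X(\C) \to \text{Pic}_{KU}$. Pushing forward along the terminal map $\pi: X(\C) \to \ast$, the right-hand side becomes $\Gamma(X(\C), \underline{KU^{\widetilde{\alpha}}(X(\C))})$, which by the definition of twisted cohomology in Section 8 is precisely $KU^{\widetilde{\alpha}}(X(\C))$. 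The problem therefore reduces to establishing the identification $\pi_{*}(K^{top}_X(\text{Perf}(X,\alpha))) \simeq K^{top}(\text{Perf}(X,\alpha))$.

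This last identification is a twisted analogue of Proposition \ref{yoverx}. I would argue it through the motivic realization description of Proposition \ref{motives}: since $\text{Perf}(X,\alpha)$ gives rise to an object $KGL(\text{Perf}(X,\alpha)) \in \text{SH}(X)$, and since Ayoub's six-functor formalism guarantees that $\mathbf{Betti}$ commutes with pushforward along proper morphisms, the identification holds directly when $X$ is proper. For general separated $X$ of finite type, one option is to compactify and reduce to the proper case. A second, and for the case of an ordinary Azumaya algebra more natural, option is to invoke the associated Brauer--Severi scheme $\pi: P \to X$, which is Zariski locally trivial and proper over $X$. Quillen's decomposition then displays $\text{Perf}(X,\alpha)$ as a direct summand of $\text{Perf}(P)$ as a $\text{Perf}(X)$-linear category, and since $K^{top}$ and $K^{top}_X$ both preserve direct summands, Proposition \ref{yoverx} applied to the proper structure map of $P$ gives the required equivalence.

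For the cohomological form of the twist, I would invoke Corollary \ref{cohomology}, which factors the map $\widetilde{\alpha}: X(\C) \to \text{Pic}_{KU}$ classifying our local system through $K(\mathbb{Z},3) \hookrightarrow \text{Pic}_{KU}$, and identifies the resulting cohomology class with the image of $\alpha \in H^2_{\acute{e}t}(X,\mathbb{G}_m)$ under the topological realization map. This matches the $\walpha \in H^3(X,\mathbb{Z})$ in the statement, completing the identification.

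The main obstacle is precisely the push-forward identification $\pi_{*}(K^{top}_X(\text{Perf}(X,\alpha))) \simeq K^{top}(\text{Perf}(X,\alpha))$. Unlike Proposition \ref{yoverx}, we cannot simply cite a pushforward result for $\text{Perf}(Y)$ since $\text{Perf}(X,\alpha)$ is not of the form $\text{Perf}(Y)$ for any $X$-scheme $Y$; moreover, the natural transformation $K^{top}_X \circ \phi_{*} \to \phi_{*} \circ K^{top}_Y$ is not known in general to be an equivalence (as discussed in Section 7.4), so one really does need properness in some form. The Severi--Brauer route is geometrically appealing but requires carefully verifying that the summand decomposition $\text{Perf}(P) \simeq \bigoplus_{k} \text{Perf}(X,\alpha^{k})$ survives \'{e}tale sheafification of $K$-theory and Bott localization; only then does it descend to a decomposition at the level of relative topological $K$-theory sheaves on $X(\C)$ to which Proposition \ref{yoverx} can be applied termwise.
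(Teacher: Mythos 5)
Your Severi--Brauer route is exactly the paper's proof: both express $\text{Perf}(X,\alpha)$ as a summand of $\text{Perf}(P)$ via Bernardara's semi-orthogonal decomposition, track this decomposition through \'{e}tale sheafification, $\mathbb{A}^1$-localization, Betti realization, and Bott inversion (all colimit-preserving), apply Proposition \ref{yoverx} to the proper morphism $P \to X$, and then combine with Theorem \ref{thm:neat} and Corollary \ref{cohomology}. One small inaccuracy: a Severi--Brauer scheme for a nontrivial Brauer class is \'{e}tale locally, not Zariski locally, trivial, but since the argument uses only the properness of $\pi\colon P \to X$ this does not affect the proof.
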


\begin{rem}
The reason \ref{idk} is non-trivial is, again, that while 
\[
\Gamma( \underline{KU^{\walpha}(X(\C))} \simeq KU^{\walpha}(X(\C)),
\]
we do not know yet that 
\[
\Gamma( K^{top}_{X}(\text{Perf}(X, \alpha)) \simeq K^{top}(\text{Perf}(X, \alpha)) 
\]
By proposition \ref{yoverx} this is true for categories of the form $\text{Perf}(Y)\in \Catperf(X)$ for $Y \to X$, smooth and proper over $X$.  We will deduce theorem \ref{idk} from this.    

\end{rem}
For the proof of this we recall the notion of a Severi-Brauer variety over a scheme $X$. One can associate, to a given cohomological Brauer class  $\alpha \in H^{2}(X, \mathbb{G}_{m})$ a scheme $f_{\alpha}: P \to X$ which is, \'{e}tale locally, equivalent to projective space.  By \cite{bernadera} there exists a \emph{semi-orthogonal decomposition} of the category 
\[
\text{Perf}(P) = \langle \text{Perf}(X, \alpha), ....\text{Perf}(X, \alpha ^{k}) \rangle
\]
where $\text{Perf}(X , \alpha)$ is the derived $\infty$-category of $\alpha$-twisted sheaves on $X$  We will not recall exactly the precise definition of a semi-orthogonal decomposition.  The significance of this for us lies in the fact that,  there will be a decomposition of the algebraic $K$-theories:

\[
K( \text{Perf}(P)) \simeq  K(\text{Perf}(X, \alpha)) \oplus .... \oplus K(\text{Perf}(X, \alpha ^{k}))
\]
This is because (connective) algebraic $K$-theory is an additive invariant and hence sends split exact sequences of stable $\infty$-categories to sums.  This will be true when we pullback to other schemes as well: if $U \in \text{Sch}_{X}$, then we have the decomposition 
\[
K(\text{Perf}( U \times_{X} P)) \simeq  K(\text{Perf}(X, \alpha \circ f)) \oplus .... \oplus K(\text{Perf}(X, \alpha ^{k} \circ f)).
\]

\begin{proof}[Proof of Theorem \ref{idk}]
Since colimits are created objectwise in the category $Shv^{\acute{e}t}_{Sp}(X)$, and since sheafification $L: Pre_{Sp}(X) \to Shv^{\acute{e}t}_{Sp}(X)$ preserves colimits, we deduce from the above that 
\[
K^{\acute{e}t}_{X}(\Perf(P)) \simeq  K^{\acute{e}t}_{X}(\Perf(X, \alpha)) \oplus .... \oplus K^{\acute{e}t}_{X}(\Perf(X, \alpha ^{k}))
\]
is a sum in $\text{Shv}_{Sp}^{\acute{e}t}(X)$.  Recall that, from here, in order to define $K^{top}_{X}(-)$, we must first $\mathbb{A}^{1}$-localize and then apply the realization functors $ An^{*}_{X} : Shv^{\mathbb{A}^1}_{Sp}(\hen) \to Shv_{Sp}(X(\C))$.  Finally we must apply the analogue of Bott inversion, $L_{KU}: \text{Mod}_{ku}[\text{Shv}_{Sp}(X( \C))] \to   \text{Mod}_{Ku}[\text{Shv}_{Sp}(X( \C))]$.  Each of these are left adjoint functors and hence preserve colimits.  It follows that 
\[
K^{top}_{X}(\Perf(P)) \simeq  K^{top}_{X}(\Perf(X, \alpha)) \oplus ... \oplus K^{top}_{X}(\Perf(X, \alpha ^{k}))
\]
is a decomposition of $K^{top}_{X}(\Perf(P))$ in $\text{Shv}_{Sp}(X(\C)$.  As taking global sections $\Gamma= \pi_{*}: Shv_{Sp}(X(\C)) \to  Sp $ is now an exact functor of stable  $\infty$-categories we obtain a corresponding decomposition of the global sections.   \\

\noindent By proposition \ref{yoverx}, 
\[
\Gamma(K^{top}_{X}(\text{Perf}(P)) \simeq K^{top}(\Perf(P))
\]
 via the associated natural transformation.  This isomorphism descends to the summands of both sides, hence we may conclude that
\[
\Gamma(K^{top}_{X}(\text{Perf}(X, \alpha)) \simeq  K^{top}( \text{Perf}(X, \alpha)).
\]    

\noindent Together with theorem \ref{thm:neat}, this allows us to finally conclude that  
\[
K^{top}(\text{Perf}(X, \alpha))) \simeq KU^{\walpha}(X (\C)),
\]
the $\walpha$-twisted topological $K$-theory of the space $X(\C)$.   
\end{proof}

\section{Applications to Projective Fiber Bundles in Topology}

We now prove our theorem on the topological $K$ theory of projective space bundles.  For the reader's convenience, we restate the theorem.  

\begin{thm}\label{finCW}
Let $X$ be a finite CW-complex.  Let $\pi: P \to X$ be a bundle of rank $n-1$ projective spaces classified by a map $ \pi: X \to BPGL_{n}(\C)$ and let $\tilde{\alpha}: X \to B^{2} \mathbb{G}_{m}$ be the composition of this map along the map $ BPGL_{n}(\C) \to B^{2} \mathbb{G}_{m} \simeq K( \mathbb{Z}, 3)$  This gives rise to an element $\tilde{\alpha} \in H^3( X, \mathbb{Z})$ which we can use to define the twisted $K$-theory spectrum $KU^{\alpha}(X)$.  Then there exits the following decomposition of spectra:

\[
KU^{*}(P) \simeq KU^{*}(X) \oplus KU^{\widetilde{\alpha}}(X) ... \oplus KU^{\widetilde{\alpha^{n-1}}}(X).
\]    
where $KU^{\widetilde{\alpha ^k}}(X)$ denotes the twisted $K$-theory with respect to the class $\widetilde{\alpha ^{k}} \in H^{3}(X, \mathbb{Z})$.

\end{thm}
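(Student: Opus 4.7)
The plan is to reduce this purely topological statement to the algebraic setting of Theorem \ref{idk} by realizing $\pi: P \to X$ as the pullback of the complex points of an algebraic Severi--Brauer scheme, and then applying Bernardara's semi-orthogonal decomposition together with our main Theorem \ref{thm:neat}. The proof mirrors that of Theorem \ref{idk}, but since $X$ is not a priori a scheme, an algebraic approximation step is required before the machinery of relative topological $K$-theory can be deployed.

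First I would approximate $BPGL_{n}(\C)$ by smooth complex varieties in Totaro/Morel--Voevodsky style. Let $V$ be a faithful linear representation of $PGL_{n}$ and let $Z_{m} \subset V$ be a $PGL_{n}$-invariant closed subvariety of codimension $\geq m$ such that $PGL_{n}$ acts freely on $U_{m} := V \setminus Z_{m}$. Then $Y_{m} := U_{m}/PGL_{n}$ is a smooth quasi-projective $\C$-variety, the universal $PGL_{n}$-torsor $U_{m} \to Y_{m}$ yields a Severi--Brauer scheme $\tilde{Q}_{m} \to Y_{m}$ of relative dimension $n-1$, and the natural map $Y_{m}(\C) \to BPGL_{n}(\C)$ is $m$-connected. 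Since $X$ is a finite CW-complex, for $m$ large the classifying map $\alpha$ factors as $X \xrightarrow{\bar{\alpha}} Y_{m}(\C) \to BPGL_{n}(\C)$, and $P \simeq X \times_{Y_{m}(\C)} \tilde{Q}_{m}(\C)$ as $\mathbb{CP}^{n-1}$-bundles over $X$. Let $\alpha_{m} \in \pi_{0}\mathbf{Br}(Y_{m})$ denote the Brauer class of $\tilde{Q}_{m}$.

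Next, by \cite{bernadera} there is a $\Perf(Y_{m})$-linear semi-orthogonal decomposition
\[
\Perf(\tilde{Q}_{m}) \simeq \langle \Perf(Y_{m}), \Perf(Y_{m}, \alpha_{m}), \ldots, \Perf(Y_{m}, \alpha_{m}^{n-1}) \rangle.
\]
Applying the relative topological $K$-theory functor $K^{top}_{Y_{m}}(-)$, which preserves finite direct sums as it is a composition of left adjoints applied to an additive invariant of stable $\infty$-categories, and invoking Theorem \ref{thm:neat} on each piece, yields an equivalence of sheaves on $Y_{m}(\C)$:
\[
K^{top}_{Y_{m}}(\Perf(\tilde{Q}_{m})) \simeq \bigoplus_{j=0}^{n-1} \underline{KU^{\widetilde{\alpha_{m}^{j}}}(Y_{m}(\C))}.
\]
Meanwhile Proposition \ref{yoverx}, applied to the proper morphism $\tilde{Q}_{m} \to Y_{m}$, identifies the left hand side with the pushforward $f_{*}\underline{KU}_{\tilde{Q}_{m}(\C)}$ along the analytic map $f : \tilde{Q}_{m}(\C) \to Y_{m}(\C)$.

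Finally I would pull back along $\bar{\alpha} : X \to Y_{m}(\C)$ and take global sections. On the left, proper base change identifies the pullback of $f_{*}\underline{KU}_{\tilde{Q}_{m}(\C)}$ with $\pi_{*}\underline{KU}_{P}$, whose global sections compute $KU^{*}(P)$. On the right, pulling back the local system $\underline{KU^{\widetilde{\alpha_{m}^{j}}}(Y_{m}(\C))}$ produces $\underline{KU^{\widetilde{\alpha}^{j}}(X)}$, with global sections $KU^{\widetilde{\alpha}^{j}}(X)$, where the identification $\bar{\alpha}^{*}\widetilde{\alpha_{m}} = \widetilde{\alpha}$ follows from Corollary \ref{cohomology}. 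The main obstacle is the compatibility of twists in this last step: one must verify that the topological realization of the composite $Y_{m} \to BPGL_{n} \to \mathbf{B}^{2}\mathbb{G}_{m}$ on the algebraic side agrees, after precomposition with $\bar{\alpha}$, with the map $X \to BPGL_{n}(\C) \to K(\mathbb{Z},3)$ prescribed in the statement. This amounts to naturality of the realization functor applied to the delooped Brauer-class map and the $m$-connectivity of $Y_{m}(\C) \to BPGL_{n}(\C)$, but making it rigorous requires careful bookkeeping with the approximation construction and the functoriality established in Sections 4 and 9.
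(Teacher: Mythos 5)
Your proposal is correct and follows essentially the same route as the paper's proof: algebraic approximation of $BPGL_n(\C)$ by smooth quasi-projective varieties in the Totaro/Morel--Voevodsky style, factorization of the classifying map through some $Y_m(\C)$ using finiteness of $X$, Bernardara's semi-orthogonal decomposition over $Y_m$ combined with Theorem \ref{thm:neat} and Proposition \ref{yoverx} to produce a decomposition of sheaves/local systems on $Y_m(\C)$, and finally pullback along $\bar{\alpha}$ followed by global sections. You are somewhat more explicit than the paper about the twist-compatibility step at the end (flagging Corollary \ref{cohomology} and the naturality of realization), which the paper passes over more quickly, but this is unpacking rather than a genuinely different argument.
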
  

We use our results together with certain approximations of classifying spaces  by algebraic varieties, due to Totaro.  This allows us to reduce the theorem to an algebro-geometric setting, to which our previous results apply.  We begin with the following lemma:

\begin{lem}
There is a weak homotopy equivalence 
\[BPGL_{n}(\C) \simeq colim_{i \to \infty}Y^{i}(\C)
\]
where $BPGL_{n}(\C)$ denotes the classifying space of the projective linear group and each $Y^{i}(\C)$ is the space of complex points of a smooth quasi projective variety over the complex numbers.  
\end{lem}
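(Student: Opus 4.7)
The plan is to invoke Totaro's construction of algebraic approximations to classifying spaces of reductive groups. For any complex reductive linear algebraic group $G$, there exist finite-dimensional linear representations $V_i$ of $G$ together with $G$-invariant Zariski-open subschemes $U_i \subset V_i$ on which $G$ acts scheme-theoretically freely, such that the geometric quotients $Y^i := U_i / G$ exist as smooth quasi-projective $\C$-varieties, and the codimensions $\operatorname{codim}_{V_i}(V_i \setminus U_i)$ tend to infinity with $i$.

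First I would specialize this to $G = PGL_n$. A convenient choice is to take $V_i = \operatorname{End}(\C^n)^{\oplus i}$ with the diagonal conjugation action of $GL_n$, which descends to $PGL_n$ since the center acts trivially. One then lets $U_i \subseteq V_i$ be the open subvariety of $i$-tuples of matrices whose joint centralizer in $GL_n$ is exactly the scalar subgroup; this open is $PGL_n$-stable and $PGL_n$ acts freely on it. Standard stabilizer-stratification estimates show that $\operatorname{codim}_{V_i}(V_i \setminus U_i)$ grows linearly in $i$, and GIT (or the slice theorem) produces $Y^i := U_i/PGL_n$ as a smooth quasi-projective variety. The natural inclusions $V_i \hookrightarrow V_{i+1}$ as direct summands give compatible inclusions $U_i \hookrightarrow U_{i+1}$ and $Y^i \hookrightarrow Y^{i+1}$.

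Next I would pass to $\C$-points and compare with the topological classifying space. The analytification $U_i(\C) \to Y^i(\C)$ is a principal $PGL_n(\C)$-bundle in the analytic topology. Since $V_i(\C)$ is contractible and the analytic complement $V_i(\C) \setminus U_i(\C)$ has real codimension at least $2\operatorname{codim}_\C(V_i \setminus U_i)$, the space $U_i(\C)$ is $(2\operatorname{codim}_\C(V_i \setminus U_i) - 2)$-connected. Classifying this principal bundle by a map $Y^i(\C) \to BPGL_n(\C)$ and comparing fibrations via the long exact sequence on homotopy groups, one sees that $Y^i(\C) \to BPGL_n(\C)$ is a weak equivalence in a range of degrees that grows with $i$. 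Passing to the filtered colimit along the inclusions above yields the desired weak equivalence
\[
BPGL_n(\C) \simeq \operatorname{colim}_{i \to \infty} Y^i(\C).
\]

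The main obstacle is the verification that $PGL_n$ acts freely on a suitably chosen open subscheme of a representation whose complement has large codimension, and that the resulting quotient is actually representable by a smooth quasi-projective variety. Both issues are standard but slightly delicate for $PGL_n$ (as opposed to $GL_n$), since $PGL_n$ has no faithful \emph{linear} representation with an obvious free open orbit; they are handled by using the conjugation representation on tuples of matrices and appealing to geometric invariant theory to realize $U_i/PGL_n$ as a smooth quasi-projective scheme, exactly as in Totaro's original construction.
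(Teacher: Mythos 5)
Your proposal is correct and rests on the same fundamental idea as the paper — Totaro's algebraic approximations to $BPGL_n$ — but the two arguments diverge both in the specific model chosen and, more substantively, in how the colimit is compared to $BPGL_n(\C)$. The paper follows Totaro's Remark 1.4 essentially verbatim: it fixes the adjoint representation $W$, sets $V^N = \operatorname{Hom}(\C^{N+n}, W)$, removes the non-surjective locus $S$, and then invokes Morel--Voevodsky to identify the colimit of the $Y^N$ with the \'etale classifying space of $PGL_n$ in $\mathbb{A}^1$-homotopy theory, finally applying the realization functor $\|-\|$ to land in spaces. You instead use the conjugation representation on tuples of matrices with the free locus being tuples whose centralizer is exactly the scalars (valid for $i \geq 2$), and then argue directly in topology: you observe that $U_i(\C)$ becomes highly connected because the complement has large real codimension in a contractible vector space, compare the principal $PGL_n(\C)$-bundles $U_i(\C) \to Y^i(\C)$ against the universal bundle via the long exact sequence, and pass to the filtered colimit. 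Your approach is more elementary and self-contained — it avoids the detour through the stable motivic category, $\mathbb{A}^1$-localization, and the identification $\|B_{\acute{e}t}PGL_n\| \simeq BPGL_n(\C)$ that the paper implicitly relies on — whereas the paper's route has the advantage of keeping the argument entirely within the motivic/realization framework already built up in the surrounding sections. Both are valid; the connectivity-plus-classifying-map argument is the classical proof that Totaro's approximations converge to $BG$, and it is good to see it spelled out rather than deferred to the realization functor.
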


\begin{proof}
This is a reformulation of remark 1.4 of \cite{totaro}. Let $W$ be a faithful representation of $PGL_{n}$; for instance we may choose the adjoint representation which is well known to be faithful.  Then we let 
\[
V^{N} = Hom(\mathbb{C}^{N + n}, W) \simeq W^{N + n}
\]
and let $S$ be the closed subset in $V^{N}$ of non-surjective linear maps.  The group acts freely outside of $S$.  Furthermore, the codimension of $S$ goes to infinity as $N$ goes to infinity.  Now, $Y^{N} : =(V^{N} \setminus S) / PGL_{N} $ exists as a smooth quasi-projective variety by \cite[Remark 1.4]{totaro}  We may define maps 

\[
V^{N} \to V^{N+1}
\]       
by extending the linear maps in the obvious manner; it is clear that nonsurjective maps will be sent to nonsurjective maps. Moreover these maps will be equivariant  with respect to the action.    
The colimit of this diagram is independent of the choice of faithful representation; moreover, it is shown in \cite{morel} to be $\mathbb{A}^{1}$- equivalent to the \'{e}tale classifying space of $PGL_{n}$.  Applying the induced functor $|| - ||: H_{\C} \to \mathcal{S}$, we obtain the desired colimit diagram in spaces.  
\end{proof}

\begin{proof}[Proof of Theorem \ref{finCW}]
Let $X$ be a finite CW complex, as in the statement of the theorem, equipped with a map $\alpha: X \to BPGL_{n}(\C)$.  By the above lemma, $BPGL_{n}(\C) \simeq \text{hocolim}_{i \to \infty} Y^{i}(\C)$.  We may view $X$ as a compact object in the homotopy category of spaces and therefore $ \alpha: X \to    BPGL_{n}(\C)$ factors through some $Y^{i}(\C) \to  BPGL_{n}(\C)$.  If we write this factorization as $\alpha =  \beta \circ f$ for maps $f : X \to Y^{i}(\C)$ and $ \beta: Y^{i}(\C) \to BPGL_{n}(\C)$ we see that the projective bundle $P$ can be expressed as $f^{*}(P^{i})$ the pullback of a projective space bundle over $Y^{i}$ classified by the map $\beta: Y^{i}(\C) \to BPGL_{n}(\C)$.  It is important to note that, by the above lemma, the map $\beta$ is in the image of the realization functor and therefore arises  from some map of \'{e}tale sheaves $ \beta^{alg}: Y^{i} \to BPGL_{n}$, giving rise to a Severi-Brauer scheme over $Y^{i}$.  The projective space bundle  over $Y^{i}(\C)$ can therefore be thought of as the space of complex points of this Severi-Brauer scheme.  
\\

The composition $ Y^{i}(\C) \xrightarrow{\beta} BPGL_{n}(\C) \xrightarrow{\iota} K(\mathbb{Z},3)$  along the morphism gives rise to  local system of invertible $KU$-module spectra over $Y^{i}(\C)$.  We obtain the following decomposition of local systems in $\text{Loc}_{Y^{i}(\C)}(\text{Mod}_{KU})$:

\[
\underline{KU(P^{i})} \simeq \underline{KU(Y^{i}(\C))}  \oplus \underline{KU^{\alpha}(Y^{i}(\C))} \oplus ... \oplus \underline{KU^{\alpha^{n-1}}(Y^{i}(\C))}.
\]

\noindent by the results of the previous section.  We seek to pull back this decomposition to   $Loc_{X}(\text{Mod}_{KU})$.  This is straightforward; the map $f: X \to Y^{i}(\C)$ induces an adjunction 
\[
f^{*}: Loc_{Y^{i}(\C)}(\Mod_{KU} ) \leftrightarrow Loc_{X}(\Mod_{KU}) : f_{*}
\]
The functor $f^{*}$, being a left adjoint, preserves coproducts hence giving us a decomposition of local systems
\begin{equation} \label{pullback}
f^{*}(\underline{KU(P^{i})}) \simeq f^{*}(\underline{KU(Y^{i}(\C))})  \oplus f^{*}(\underline{KU^{\alpha}(Y^{i}(\C)) })\oplus ... \oplus f^{*}(\underline{KU^{\alpha^{n-1}}(Y^{i}(\C))})
\end{equation}
 on $X$.  It remains to identify these pullbacks with the corresponding local systems on $X$.  To see that $f^{*}(\underline{KU(P^{i})}) \simeq \underline{KU(P)}$ on $X$, note that the local system $\underline{KU(P^{i})}$ is given by 
 \[
 Y \to \mathcal{S} \xrightarrow{KU^{(-)}} \text{Mod}_{KU};
\]
we obtain $f^{*}(\underline{KU(P^{i})})$ by precomposing this with $X \xrightarrow{f} Y$.  Of course the composition $X \xrightarrow{f} Y \to \mathcal{S}$ classifies the pull-back
\[
P \simeq P^{i} \times_{Y} X 
\]
as a local system of spaces over $X$; composing with $\mathcal{S} \xrightarrow{KU^{(-)}} \text{Mod}_{KU}$ gives us $\underline{KU^{*}(P)}$.  The identification of the terms on the right hand side of \ref{pullback} is immediate as the pullback will be given by the following map obtained through precomposition by $ X \xrightarrow{f} Y^{i}(\C)$: 
\[
X \xrightarrow{f} Y^{i}(\C) \xrightarrow{ \beta}  BPGL_{n}(\C) \to K(\mathbb{Z}, 3) \to Pic_{KU} \hookrightarrow \text{Mod}_{KU}
\]
This is none other than the local system corresponding to $\alpha: X \to  BPGL_{n}(\C)$.  The result is now immediate upon taking global sections.  
\end{proof}
\bibliographystyle{amsalpha}
\bibliography{biblio}
\end{document}